\newtheorem{theorem}{Theorem}[section]
\newtheorem{lemma}[theorem]{Lemma}
\newtheorem{corollary}[theorem]{Corollary}
\theoremstyle{definition}
\newtheorem{definition}[theorem]{Definition}
\newtheorem{example}[theorem]{Example}
\newtheorem{remark}[theorem]{Remark}
\numberwithin{equation}{section}
\newenvironment{acknowledgement}{\smallskip{\sc Acknowledgments.}\rm}{\smallskip}
\def\mbox{\textrm}
\def\mbox{\textrm}
\def\Qlb#1{#1}
\def\Qcb#1{#1} 
\def\FRAME#1#2#3#4#5#6#7#8
\begin{document}
\title[Torsion of digraphs]{Torsion of digraphs and path complexes}
\date{December 2020}
\author[A. Grigor'yan]{Alexander Grigor'yan}
\address{Alexander Grigor'yan, Department of Mathematics, University of
Bielefeld, 33501 Bielefeld, Germany}
\email{grigor@math.uni-bielefeld.de}
\author[Y. Lin]{Yong Lin}
\address{Yong Lin, Yau Mathematical Sciences Center, Tsinghua University,
Beijing, 100084, China.}
\email{yonglin@tsinghua.edu.cn}
\author[S.-T. Yau]{Shing-Tung Yau}
\address{Shing-Tung Yau, Department of Mathematics, Harvard University,
Cambridge, Massachusetts, USA.}
\email{yau@math.harvard.edu}
\thanks{AG is supported by the SFB1283 of the German Research Council. YL is supported by the National Science Foundation of China
 (Grant No. 12071245 and 11761131002)}

\begin{abstract}
We define the notions of Reidemeister torsion and analytic torsion for
directed graphs by means of the path homology theory introduced by the
authors in \cite{Grigoryan-Lin-Muranov-Yau2013,
Grigoryan-Lin-Muranov-Yau2014, Grigoryan-Lin-Muranov-Yau2015,
Grigoryan-Lin-Muranov-Yau2020}. We prove the identity of the two notions of
torsions as well as obtain formulas for torsions of Cartesian products and
joins of digraphs.
\end{abstract}

\maketitle
\tableofcontents



\section{Introduction}

\label{S:intr} \setcounter{equation}{0}Let $M$ be a compact oriented
Riemannian manifold. Assume that $M$ is triangulated by a simplicial complex 
$K$. Let $\rho $ be a acyclic representation of $\pi _{1}(K)$ by orthogonal
matrices, i.e., the twisted cohomology group $H^{p}(K;\rho )$ is trivial for
all $p$. The Reidemeister torsion $\tau _{\rho }(M)$ is defined from the
cochain complex of $K$ by taking a alternating product of determinants \cite%
{Reidemeister1935, Milnor1966}. It is a manifold invariant and is used to
distinguish homotopy equivalent spaces \cite{Cohen1973}.

To describe the Reidemeister torsion in analytic terms, Ray and Singer \cite%
{Ray-Singer1971} defined an analytic torsion $T_{\rho }(M)$ for any compact
oriented manifold $M$ and orthogonal representation $\rho $ of the
fundamental group $\pi _{1}(M)$. Their definition used the spectrum of the
Hodge Laplacian on twisted forms. When $\rho $ is acyclic and orthogonal,
Cheeger \cite{Cheeger1979} and M\"{u}ller \cite{Muller1978} proved that $%
\tau _{\rho }(M)=T_{\rho }(M)$. When $\rho $ is orthogonal but not acyclic,
one still can define the analytic torsion $T_{\rho }(M)$ \cite{Fried1986}.

In this paper we introduce the notions of Reidemeister and analytic torsions
on finite digraphs by means of the path homology theory of Grigoryan, Lin,
Muranov and Yau \cite{Grigoryan-Lin-Muranov-Yau2013}, \cite%
{Grigoryan-Lin-Muranov-Yau2014}, \cite{Grigoryan-Lin-Muranov-Yau2015}, \cite%
{Grigoryan-Lin-Muranov-Yau2020}. Namely, we use the homology basis to
construct a preferred basis of the path complex on a digraph $G$, which
leads to the definition of the Reidemeister torsion $\tau (G)$. Next, we
define the Hodge Laplace operator $\Delta _{p}$ acting on $p$-paths and use
the positive eigenvalues of $\Delta _{p}$ in order to define the analytic
torsion $T(G)$. Although the homology groups can be nontrivial in our case,
we still can prove that $\tau (G)=T(G)$ (Theorem \ref{T:main_thm}) by using
an extension of the argument of \cite[Proposition 1.7]{Ray-Singer1971}.

Given two finite digraphs $X$ and $Y$, we obtain formulas for the torsions
of their Cartesian product $X\Box Y$ and join $X\ast Y$ (Theorems \ref%
{T:main_thm1} and \ref{Tmain2}). Our proofs rely essentially on the K\"{u}%
nneth formulas for chain complexes of $X\Box Y$ and $X\ast Y$ proved in \cite%
{GMY} and \cite{Grigoryan-Lin-Muranov-Yau2020}. The approach to the proof is
borrowed from \cite[Thm. 2.5]{Ray-Singer1971} but our setting is more
complicated in the following sense. The notion of torsion depends on the
choice of an inner product in the chain spaces, and the cases of the
Cartesian product and join require usage of different inner products.
Besides, the case of join requires usage of an augmented chain complex. For
that reason, the final formulas for $\tau \left( X\Box Y\right) $ and $\tau
\left( X\ast Y\right) $ stated in Corollaries \ref{Cor1} and \ref{Cor2} are
more complicated than one could expect.

In Section \ref{Sec1} we revise the path homology theory. In Section \ref%
{SecHodge} we introduce the notions of the Hodge Laplacian on an arbitrary
finite-dimensional chain complex, prove the Hodge decomposition, define the
notions of R-torsion and analytic torsion, and prove their identity (Theorem %
\ref{T:main_thm}).

In Section \ref{SecP} we revise the notion of the Cartesian product of
digraphs, the K\"{u}nneth formula for the Cartesian product, and use it to
prove the formula for the torsion of $X\Box Y$ (Theorem \ref{T:main_thm1}
and Corollaries \ref{Cor1n}, \ref{Cor1}). In Section \ref{SecJ} we fulfil a
similar program for the join of digraphs (Theorem \ref{Tmain2} and
Corollaries \ref{Cor2}, \ref{Cor2n}).

We give numerous examples of application of our results by computing
torsions of various digraphs including simplices, cubes, spheres, cycles,
prism, etc.

\section{Path complexes and path homology}

\label{Sec1}\label{S:rei_tor_and_ana_tor} \setcounter{equation}{0}Let us
briefly revise the definition of \emph{path complex} and \emph{path homology}
introduced by Grigoryan, Lin, Muranov and Yau in \cite%
{Grigoryan-Lin-Muranov-Yau2020} (see also \cite%
{Grigoryan-Lin-Muranov-Yau2013}).

\subsection{Path complex}

\label{SecPC}Let $V$ be a finite set. For any $p\geq 0$, an \emph{elementary}
$p$-\emph{path} is any (ordered) sequence $i_{0},\ldots ,i_{p}$ of $p+1$
vertices of $V$ that will be denoted simply by $i_{0}\ldots i_{p}$ or by $%
e_{i_{0}\ldots i_{p}}$. The number $p$ is called the length of the path $%
i_{0}...i_{p}.$

Formal $\mathbb{R}$-linear combinations of $e_{i_{0}...i_{p}}$ are called $p$%
-\emph{paths}. Denote by $\Lambda _{p}=\Lambda _{p}\left( V\right) $ the
linear space of all $p$-paths; that is, the elements of $\Lambda _{p}$ are 
\begin{equation*}
v=\sum_{i_{0}i_{1}\ldots i_{p}}v^{_{i_{0}i_{1}\ldots
i_{p}}}e_{i_{0}i_{1}...i_{p}},\ \ \text{where}\ \ v^{_{i_{0}i_{1}\ldots
i_{p}}}\in \mathbb{R}.
\end{equation*}

\begin{definition}
\label{D:defi_of_Boundary oper}For any $p\geq 0,$ define the \emph{boundary
operator} $\partial :\Lambda _{p+1}\rightarrow \Lambda _{p}$ by 
\begin{equation}
\left( \partial v\right) ^{i_{0}\ldots i_{p}}=\sum\limits_{k\in
V}\sum\limits_{q=0}^{p+1}\left( -1\right) ^{q}v^{i_{0}\ldots
i_{q-1}ki_{q}\ldots i_{p}},  \label{e:dv}
\end{equation}%
where the index $k$ is inserted so that it is preceded by $q$ indices $%
i_{0}...i_{q-1}.$ Set also $\Lambda _{-1}=\left\{ 0\right\} $ and define the
operator $\partial :\Lambda _{0}\rightarrow \Lambda _{-1}$ by setting $%
\partial v=0$ for all $v\in \Lambda _{0}$.
\end{definition}

It follows from (\ref{e:dv}) that 
\begin{equation}
\partial e_{j_{0}\ldots j_{p+1}}=\sum\limits_{q=0}^{p+1}\left( -1\right)
^{q}e_{j_{0}\ldots \widehat{j_{q}}\ldots j_{p+1}},  \label{3.4}
\end{equation}%
where $\widehat{\cdot }$ means omission of the index.

It is easy to show that $\partial ^{2}v=0$ for any $v\in \Lambda _{p}$ (\cite%
[Lemma 2.1]{Grigoryan-Lin-Muranov-Yau2020}). Hence, the family of linear
spaces $\left\{ \Lambda _{p}\right\} $ with the boundary operator $\partial $
determine a chain complex that will be denoted by $\Lambda \left( V\right) $.

\begin{definition}
\label{D:defi_of_Regular path} An elementary $p$-path $e_{i_{0}\ldots i_{p}}$
on a set $V$ is called \emph{regular} if $i_{k}\neq i_{k+1}$ for all $%
k=0,\ldots ,p-1$, and \emph{irregular} otherwise.
\end{definition}

Let $I_{p}$ be the subspace of $\Lambda _{p}$ that is spanned by all
irregular $e_{i_{0}\ldots i_{p}}.$ It is easy to verify that $\partial
I_{p}\subset I_{p-1}$ (cf. \cite{Grigoryan-Lin-Muranov-Yau2020}). Hence, the
boundary operator $\partial $ is well-defined on the quotient space $%
\mathcal{R}_{p}:=\Lambda _{p}/I_{p}$: 
\begin{equation*}
\partial :\mathcal{R}_{p}\rightarrow \mathcal{R}_{p-1}
\end{equation*}%
for all $p\geq 0.$ Clearly, $\mathcal{R}_{p}$ is linearly isomorphic to the
space of all regular $p$-paths: 
\begin{equation}
\mathcal{R}_{p}\cong \mathrm{span}\left\{ e_{i_{0}\ldots i_{p}}:i_{0}\ldots
i_{p}\text{ is regular}\right\} .  \label{e:Rp=}
\end{equation}%
For simplicity of notation, we will identify $\mathcal{R}_{p}$ with the
space of all regular $p$-paths. With this identification, the formula (\ref%
{e:dv}) for the operator $\partial :\mathcal{R}_{p+1}\rightarrow \mathcal{R}%
_{p}$ is true only for regular paths $i_{0}...i_{p}$ whereas $\left(
\partial v\right) ^{i_{0}\ldots i_{p}}=0$ if $i_{0}...i_{p}$ is irregular.
The identity (\ref{3.4}) remains true if we replace by $0$ each irregular
path on the right hand side$.$

Denote by $\mathcal{R}\left( V\right) $ the chain complex $\left\{ \mathcal{R%
}_{p}\right\} $ with the boundary operator $\partial .$

\begin{definition}
A \emph{path complex} over a set $V$ is a non-empty collection $P$ of
regular elementary paths on $V$ with the following property: 
\begin{equation}
\text{if\ }\ e_{i_{0}...i_{n}}\in P\ \text{then\ }e_{i_{0}...i_{n-1}}\in P\ 
\text{and\ }e_{i_{1}...i_{n}}\in P.  \label{t}
\end{equation}
\end{definition}

When a path complex $P$ is fixed, all the paths from $P$ are called \emph{%
allowed}, whereas the elementary paths that are not in $P$ are called \emph{%
non-allowed}. Condition (\ref{t}) means that if we remove the first or the
last element of an allowed $n$-path then the resulting $\left( n-1\right) $%
-path is also allowed.

The set of all $n$-paths from $P$ is denoted by $P_{n}$. The set $P_{-1}$
consists of a single empty path $e$. The elements of $P_{0}$ (that is,
allowed $0$-paths) are called the \emph{vertices} of $P$. Clearly, $P_{0}$
is a subset of $V$. By the property (\ref{t}), if $i_{0}...i_{n}\in P$ then
all $i_{k}$ are vertices of $P$. Hence, we can (and will) remove from the
set $V$ all non-vertices so that $V=P_{0}.$

There are two natural families/examples of path complexes. Any abstract
finite simplicial complex $S$ is a collection of subsets of a finite vertex
set $V$ that satisfies the following property: 
\begin{equation*}
\text{if\ }\sigma \in S\ \text{then any subset of }\sigma \ \text{is also in 
}S.
\end{equation*}%
Let us enumerate the elements of $V$ by distinct reals and identify any
subset $s$ of $V$ with the elementary path that consists of the elements of $%
s$ put in the (strictly) increasing order. Denote by $P\left( S\right) $
this collections of elementary paths on $V$ that uniquely determines $S$.
The defining property of a simplex can be restated the following: 
\begin{equation}
\text{if \ }v\in P\left( S\right) \text{ then any subsequence of \ }v\text{
is also in \ }P\left( S\right) \text{.}  \label{s}
\end{equation}%
Consequently, the family $P\left( S\right) $ satisfies the property (\ref{t}%
) so that $P\left( S\right) $ is a path complex. The allowed $n$-paths in $%
P\left( S\right) $ are exactly the $n$-simplexes.

\subsection{Digraphs}

Another natural family of path complexes comes from digraphs.

\begin{definition}
\label{D:defi_of_Digraph} A \emph{digraph} $G=(V,E)$ is a couple, where $V$
is a set, whose elements are called the \emph{vertices}, and $E$ is a subset
of $\{V\times V\setminus \mathrm{diag}\}$ that consists of ordered pairs of
vertices called (directed) \emph{edges} or \emph{arrows}. The fact that a
pair $\left( x,y\right) $ is an arrow will be denoted by $x\rightarrow y.$
\end{definition}

An elementary $n$-path $i_{0}...i_{n}$ on the vertex set $V$ of a digraph is
called allowed if $i_{k-1}\rightarrow i_{k}$ for any $k=1,...,n$. Denote by $%
P_{n}=P_{n}\left( G\right) $ the set of all allowed $n$-paths. In
particular, we have $P_{0}=V$ and $P_{1}=E$. Clearly, the collection $%
P=\tbigcup_{n}P_{n}$ of all allowed paths satisfies the condition (\ref{t})
so that $P$ is a path complex. This path complex is naturally associated
with the digraph $G$ and will be denoted by $P\left( G\right) $.

\subsection{Path homology}

Let us return to an arbitrary path complex $P$ over $V$. Denote by $\mathcal{%
A}_{p}\left( P\right) $ the subspace of $\mathcal{R}_{p}\left( V\right) $
spanned by the allowed elementary $p$-paths, that is, 
\begin{equation}
\mathcal{A}_{p}=\mathrm{span}{}\left\{ e_{i_{0}\ldots i_{p}}:i_{0}\ldots
i_{p}\in E_{p}\right\} .  \label{e:Padef}
\end{equation}%
The elements of $\mathcal{A}_{p}$ are called \emph{allowed} $p$-paths.

Note that the spaces $\mathcal{A}_{p}$ of allowed paths are in general \emph{%
not} invariant for $\partial $. Consider the following subspace of $\mathcal{%
A}_{p}$ 
\begin{equation}
\Omega _{p}\equiv \Omega _{p}\left( P\right) :=\left\{ v\in \mathcal{A}%
_{p}:\partial v\in \mathcal{A}_{p-1}\right\} .  \label{e:PE=}
\end{equation}%
The spaces $\Omega _{p}$ are $\partial $-invariant. Indeed, $v\in \Omega
_{p} $ implies $\partial v\in \mathcal{A}_{p-1}$ and $\partial \left(
\partial v\right) =0\in \mathcal{A}_{p-2}$, whence $\partial v\in \Omega
_{p-1}$. The elements of $\Omega _{p}$ are called $\partial $-\emph{invariant%
} $p$-paths.

Hence, we obtain a chain complex $\Omega =\Omega \left( P\right) :$ 
\begin{equation}
\begin{array}{cccccccccccc}
0 & \leftarrow & \Omega _{0} & \overset{\partial }{\leftarrow } & \Omega _{1}
& \overset{\partial }{\leftarrow } & \cdots & \overset{\partial }{\leftarrow 
} & \Omega _{p-1} & \overset{\partial }{\leftarrow } & \Omega _{p} & \overset%
{\partial }{\leftarrow }\cdots%
\end{array}
\label{Omi}
\end{equation}%
By construction we have $\Omega _{0}=\mathcal{A}_{0}$ and $\Omega _{1}=%
\mathcal{A}_{1}$, while in general $\Omega _{p}\subset \mathcal{A}_{p}$.

Set 
\begin{equation*}
Z_{p}=\ker \partial |_{\Omega _{p}}\ \text{and}\ \ B_{p}=\partial \Omega
_{p+1}.
\end{equation*}

\begin{definition}
Define for all $p\geq 0$ the \emph{path homology groups }$H_{p}\left(
P\right) $ of the path complex $P$ by 
\begin{equation}
H_{p}\left( P\right) :=H_{p}\left( \Omega \left( P\right) \right)
=Z_{p}/B_{p}.  \label{HZB}
\end{equation}
\end{definition}

Let us note that the spaces $H_{p}\left( P\right) $ (as well as the spaces $%
\Omega _{p}\left( P\right) $) can be computed directly by definition using
simple tools of linear algebra, in particular, those implemented in modern
computational software. On the other hand, some theoretical tools for
computation of homology groups, like homotopy theory and K\"{u}nneth
formulas, were developed in \cite{Grigoryan-Lin-Muranov-Yau2014}, \cite{GMY}%
, \cite{Grigoryan-Lin-Muranov-Yau2020}.

In particular, for any digraph $G$ define its path homology groups by%
\begin{equation*}
H_{p}\left( G\right) =H_{p}\left( P\left( G\right) \right) .
\end{equation*}%
In what follows we are going to deal with only finite chain complexes: 
\begin{equation}
\begin{array}{cccccccccccccccc}
0 & \leftarrow & \Omega _{0} & \overset{\partial }{\leftarrow } & \Omega _{1}
& \overset{\partial }{\leftarrow } & \cdots & \overset{\partial }{\leftarrow 
} & \Omega _{p-1} & \overset{\partial }{\leftarrow } & \Omega _{p} & \overset%
{\partial }{\leftarrow }\cdots & \overset{\partial }{\leftarrow } & \Omega
_{N} & \leftarrow & 0%
\end{array}
\label{OmN}
\end{equation}%
where $N\in \mathbb{N}.$ Clearly, any chain complex (\ref{Omi}) can be
truncated to the form (\ref{OmN}).

For path complexes and digraphs this means that we restrict the length of
allowed paths to $N$. There is a large family of digraphs where the chain
complex $\Omega $ is finite naturally because $\Omega _{N}=\left\{ 0\right\} 
$ for some $N$ (and, hence, $\Omega _{n}=\left\{ 0\right\} $ for all $n\geq
N $). All examples of digraphs that are considered in this paper have
naturally finite chain complex $\Omega .$

If this is not the case then we can choose $N$ arbitrarily and truncate the
chain complex $\Omega $ to (\ref{OmN}). The number $N$ will be referred to
as the dimension of the chain complex (\ref{OmN}) or that of the underlying
path complex.

Some examples of chain complexes $\Omega $ and homology groups of digraphs
will be given in Section \ref{SecEx}.

\section{Finite chain complexes}

\label{SecHodge}Let us fix a finite chain complex $\Omega $ (\ref{OmN}) of
finite dimensional linear spaces $\Omega _{p}.$ We are interested in chain
complexes that are coming from path complexes as described above, but in
this section we revise rather well known facts about general chain complexes 
$\Omega .$

Let us choose arbitrarily an inner product $\langle ,\rangle $ in each
linear space $\Omega _{p}.$ In the case when $\Omega $ comes from a path
complex, an inner product in $\Omega _{p}$ can be taken from the ambient
space $\mathcal{R}_{p}.$ In this paper we use two different inner products
in $\mathcal{R}_{p}$. Let $u,v\in \mathcal{R}_{p}$ and 
\begin{equation*}
u=\sum_{\mathbf{i}}u^{\mathbf{i}}e_{\mathbf{i}}\ \ \ \text{and\ \ }v=\sum_{%
\mathbf{i}}v^{\mathbf{i}}e_{\mathbf{i}}
\end{equation*}%
where $\mathbf{i}=i_{0}...i_{p}$. The first (\emph{standard}) inner product
is 
\begin{equation}
\langle u,v\rangle =\sum_{\mathbf{i}}u^{\mathbf{i}}v^{\mathbf{i}},
\label{ii}
\end{equation}%
and the second (\emph{normalized}) inner product is 
\begin{equation}
\langle u,v\rangle =\frac{1}{p!}\sum_{\mathbf{i}}u^{\mathbf{i}}v^{\mathbf{i}%
}.  \label{iip}
\end{equation}%
These inner products will be used in examples and in Section \ref{SecP}, but
in general we do not impose any restriction on the choice of inner products
in the spaces $\Omega _{p}.$

\subsection{Hodge Laplacian}

Denote by $\partial _{p}$ the operator $\partial :\Omega _{p}\rightarrow
\Omega _{p-1}.$ Assuming that the inner product structure in $\Omega $ is
chosen, consider the operator $\partial _{p}^{\ast }:\Omega
_{p-1}\rightarrow \Omega _{p}$ that is the adjoint operator of $\partial
_{p} $ with respect to the inner products in $\Omega _{p}$ and $\Omega
_{p-1} $.

\begin{definition}
Define the \emph{Hodge-Laplace operator} $\Delta _{p}:\Omega _{p}\rightarrow
\Omega _{p}$ by 
\begin{equation}
\Delta _{p}u=\partial _{p}^{\ast }\partial _{p}u+\partial _{p+1}\partial
_{p+1}^{\ast }u.  \label{Dede}
\end{equation}
\end{definition}

We will use a shorter notation%
\begin{equation*}
\Delta _{p}u=\partial ^{\ast }\partial u+\partial \partial ^{\ast }u
\end{equation*}
since it is clear from this expression in which spaces $\Omega _{p}$ act the
operators $\partial $ and $\partial ^{\ast }.$

An element $u\in \Omega _{p}$ is called \emph{harmonic} if $\Delta _{p}u=0.$

\begin{lemma}
\label{Lemharm}An element $u\in \Omega _{p}$ is harmonic if and only if $%
\partial u=0$ and $\partial ^{\ast }u=0.$
\end{lemma}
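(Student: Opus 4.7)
The plan is to use the standard Hodge-theoretic trick of pairing $\Delta_p u$ with $u$ and exploiting adjointness to turn the Laplacian into a sum of squared norms.

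First I would dispose of the easy direction: if $\partial u = 0$ and $\partial^{\ast}u = 0$, then $\partial^{\ast}\partial u = \partial^{\ast}(0) = 0$ and $\partial\partial^{\ast}u = \partial(0) = 0$, so by the definition (\ref{Dede}) we immediately get $\Delta_p u = 0$. This requires nothing beyond linearity of $\partial$ and $\partial^{\ast}$.

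For the nontrivial direction, assume $\Delta_p u = 0$. I would compute the inner product $\langle \Delta_p u, u \rangle$ in $\Omega_p$ and rewrite it using the fact that $\partial^{\ast}_p$ is the adjoint of $\partial_p$:
\begin{equation*}
\langle \Delta_p u, u\rangle = \langle \partial^{\ast}\partial u, u\rangle + \langle \partial\partial^{\ast}u, u\rangle = \langle \partial u, \partial u\rangle + \langle \partial^{\ast}u, \partial^{\ast}u\rangle = \|\partial u\|^2 + \|\partial^{\ast}u\|^2.
\end{equation*}
Since the left side vanishes by assumption and both terms on the right are non-negative, each must be zero. By positive-definiteness of the inner products on $\Omega_{p-1}$ and $\Omega_{p+1}$, this forces $\partial u = 0$ and $\partial^{\ast}u = 0$.

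The only subtlety worth flagging is making sure the adjoint moves between the correct spaces: the term $\langle \partial^{\ast}\partial u, u\rangle$ uses the inner product on $\Omega_p$ paired against the adjoint of $\partial_p : \Omega_p \to \Omega_{p-1}$, yielding $\|\partial u\|^2$ measured in $\Omega_{p-1}$, while $\langle \partial\partial^{\ast}u, u\rangle$ uses the adjoint of $\partial_{p+1}:\Omega_{p+1}\to\Omega_p$, yielding $\|\partial^{\ast}u\|^2$ measured in $\Omega_{p+1}$. This is purely bookkeeping — there is no genuine obstacle, as the argument is formal and relies only on finite-dimensionality (ensuring adjoints exist) and positive-definiteness of the chosen inner products, both of which are given.
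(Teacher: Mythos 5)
Your proof is correct and follows exactly the same argument as the paper: the easy direction by direct substitution into (\ref{Dede}), and the converse by pairing $\Delta_p u$ with $u$ and using adjointness to obtain $\|\partial u\|^2 + \|\partial^{\ast}u\|^2 = 0$. No issues.
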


\begin{proof}
If $\partial u=0$ and $\partial ^{\ast }u=0$ then by (\ref{Dede}) we have $%
\Delta _{p}u=0.$ Conversely, if $\Delta _{p}u=0$ then we obtain%
\begin{equation*}
0=\left\langle \Delta _{p}u,u\right\rangle =\left\langle \partial ^{\ast
}\partial u,u\right\rangle +\left\langle \partial \partial ^{\ast
}u,u\right\rangle =\left\langle \partial u,\partial u\right\rangle
+\left\langle \partial ^{\ast }u,\partial ^{\ast }u\right\rangle ,
\end{equation*}%
whence $\left\Vert \partial u\right\Vert =\left\Vert \partial ^{\ast
}u\right\Vert =0.$
\end{proof}

Denote by $\mathcal{H}_{p}$ the set of all harmonic elements in $\Omega _{p}$
so that $\mathcal{H}_{p}$ is a subspace of $\Omega _{p}.$

\begin{lemma}
\emph{(Hodge decomposition)} The space $\Omega _{p}$ is an orthogonal sum of
three subspaces as follows:%
\begin{equation}
\Omega _{p}=\partial \Omega _{p+1}\dbigoplus \partial ^{\ast }\Omega
_{p-1}\dbigoplus \mathcal{H}_{p}.  \label{Hd}
\end{equation}
\end{lemma}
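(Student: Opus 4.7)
The plan is a standard finite-dimensional Hodge-theoretic argument: establish pairwise orthogonality of the three summands, and then show that together they exhaust $\Omega_p$ by an orthogonal-complement calculation.

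First I would verify pairwise orthogonality. For any $v\in\Omega_{p+1}$ and $w\in\Omega_{p-1}$, adjointness and $\partial^2=0$ give
\begin{equation*}
\langle \partial v,\partial^{\ast} w\rangle = \langle \partial^{2} v,w\rangle = 0,
\end{equation*}
so $\partial\Omega_{p+1}\perp \partial^{\ast}\Omega_{p-1}$. For any harmonic $h\in\mathcal{H}_p$, Lemma \ref{Lemharm} gives $\partial h=0$ and $\partial^{\ast} h=0$, hence
\begin{equation*}
\langle \partial v,h\rangle = \langle v,\partial^{\ast} h\rangle = 0,\qquad \langle \partial^{\ast} w,h\rangle = \langle w,\partial h\rangle = 0,
\end{equation*}
which handles the remaining two orthogonality relations.

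Next I would show that the three subspaces span $\Omega_p$. Since $\Omega_p$ is finite dimensional and orthogonality is already established, it suffices to show that the orthogonal complement of $\partial\Omega_{p+1}\oplus\partial^{\ast}\Omega_{p-1}$ lies in $\mathcal{H}_p$. Let $u\in\Omega_p$ satisfy $u\perp\partial\Omega_{p+1}$ and $u\perp\partial^{\ast}\Omega_{p-1}$. The first condition means $\langle u,\partial v\rangle=0$ for all $v\in\Omega_{p+1}$, i.e.\ $\langle \partial^{\ast} u,v\rangle=0$ for all such $v$, forcing $\partial^{\ast} u=0$. The second condition symmetrically gives $\partial u=0$. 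By Lemma \ref{Lemharm}, $u$ is harmonic, so $u\in\mathcal{H}_p$. Combining this with pairwise orthogonality yields the decomposition (\ref{Hd}).

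There is no real obstacle here — finite dimensionality removes every analytic subtlety (no closure of ranges, no need for Fredholm theory), and the only inputs are Lemma \ref{Lemharm} and the formal adjoint relation $\langle \partial u,v\rangle=\langle u,\partial^{\ast} v\rangle$. If anything requires a moment's care, it is keeping track of which copy of $\partial$ or $\partial^{\ast}$ acts between which $\Omega_p$'s; but this is already absorbed in the shorthand introduced after Definition \ref{Dede}.
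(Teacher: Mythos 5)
Your proposal is correct and follows essentially the same route as the paper: both establish orthogonality of $\partial \Omega _{p+1}$ and $\partial ^{\ast }\Omega _{p-1}$ via $\partial ^{2}=0$, and both identify the orthogonal complement of their sum with $\mathcal{H}_{p}$ using the adjoint relation and Lemma \ref{Lemharm}. The only cosmetic difference is that you verify all three pairwise orthogonalities explicitly, whereas the paper gets the remaining two for free from its two-way equivalence $K=\mathcal{H}_{p}$; both are complete.
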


\begin{proof}
If $u\in \partial \Omega _{p+1}$ and $v\in \partial ^{\ast }\Omega _{p-1}$
then $u=\partial u^{\prime }$ and $v=\partial ^{\ast }v^{\prime }$, and we
have%
\begin{equation*}
\langle u,v\rangle =\langle \partial u^{\prime },\partial ^{\ast }v^{\prime
}\rangle =\left\langle \partial ^{2}u^{\prime },v^{\prime }\right\rangle =0
\end{equation*}%
so that the subspaces $\partial \Omega _{p+1}$ and $\partial ^{\ast }\Omega
_{p-1}$ are orthogonal. Denote by $K$ the orthogonal complement of $\partial
\Omega _{p+1}\dbigoplus \partial ^{\ast }\Omega _{p-1}$ in $\Omega _{p}.$
Then we have%
\begin{equation*}
u\in K\Leftrightarrow \left\langle u,v\right\rangle =0\ \ \text{for all }%
v\in \partial \Omega _{p+1}\ \text{and }v\in \partial ^{\ast }\Omega _{p-1}
\end{equation*}%
that is, 
\begin{eqnarray*}
u\overset{}{\in }K &\Leftrightarrow &\left\langle u,\partial v^{\prime
}\right\rangle =0\ \ \forall v^{\prime }\in \Omega _{p+1}\ \ \text{and\ \ }%
\left\langle u,\partial ^{\ast }v\right\rangle \ \ \forall v\in \Omega _{p-1}
\\
&\Leftrightarrow &\left\langle \partial ^{\ast }u,v^{\prime }\right\rangle
=0\ \ \forall v^{\prime }\in \Omega _{p+1}\ \ \text{and\ \ }\left\langle
\partial u,v\right\rangle \ \ \forall v\in \Omega _{p-1} \\
&\Leftrightarrow &\partial ^{\ast }u=0\ \ \text{and\ \ }\partial u=0 \\
&\Leftrightarrow &u\in \mathcal{H}_{p}.
\end{eqnarray*}%
Hence, $K=\mathcal{H}_{p}$ which finishes the proof.
\end{proof}

\begin{corollary}
There is a natural isomorphism 
\begin{equation}
H_{p}\cong \mathcal{H}_{p}.  \label{HH}
\end{equation}
\end{corollary}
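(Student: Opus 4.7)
The plan is to use the Hodge decomposition from the preceding lemma to identify $Z_p$ with $\partial\Omega_{p+1}\oplus\mathcal{H}_p$, so that the quotient $Z_p/B_p$ collapses cleanly onto $\mathcal{H}_p$.

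First I would show the inclusion $\partial\Omega_{p+1}\oplus\mathcal{H}_p \subseteq Z_p$. Clearly $\partial(\partial u') = 0$ for any $u'\in\Omega_{p+1}$, so $\partial\Omega_{p+1}\subseteq\ker\partial|_{\Omega_p}$, and by Lemma~\ref{Lemharm} every harmonic element satisfies $\partial h=0$. Since the sum is orthogonal (inside the Hodge decomposition), this gives one direction.

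Next I would show the reverse inclusion $Z_p \subseteq \partial\Omega_{p+1}\oplus\mathcal{H}_p$. Take $u \in Z_p$ and decompose it according to (\ref{Hd}) as $u = \partial u_1 + \partial^{\ast}u_2 + h$ with $u_1\in\Omega_{p+1}$, $u_2\in\Omega_{p-1}$, $h\in\mathcal{H}_p$. Applying $\partial$ kills the first and third terms, leaving $0 = \partial u = \partial\partial^{\ast}u_2$. Taking the inner product with $u_2$ yields
\begin{equation*}
0 = \langle \partial\partial^{\ast}u_2, u_2\rangle = \langle \partial^{\ast}u_2, \partial^{\ast}u_2\rangle = \|\partial^{\ast}u_2\|^2,
\end{equation*}
so $\partial^{\ast}u_2 = 0$. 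Hence the middle component of $u$ vanishes and $u \in \partial\Omega_{p+1}\oplus\mathcal{H}_p$, as required. Combined with the previous step, $Z_p = \partial\Omega_{p+1}\oplus\mathcal{H}_p = B_p \oplus \mathcal{H}_p$.

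The conclusion then follows immediately: the natural projection $Z_p \to \mathcal{H}_p$ along the summand $B_p = \partial\Omega_{p+1}$ has kernel $B_p$ and descends to an isomorphism $H_p = Z_p/B_p \xrightarrow{\sim} \mathcal{H}_p$. There is no real obstacle here; the only substantive ingredient is the Hodge decomposition already proved, and the argument is the standard one. The slightly delicate point, worth stating carefully, is that the isomorphism depends on the chosen inner product (via the decomposition (\ref{Hd})), whereas $H_p$ does not — so the naturality claim in (\ref{HH}) should be read as: for each fixed choice of inner products, there is a canonical isomorphism with $\mathcal{H}_p$.
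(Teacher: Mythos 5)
Your proof is correct and follows essentially the same route as the paper: both derive $Z_p = \partial\Omega_{p+1}\oplus\mathcal{H}_p = B_p\oplus\mathcal{H}_p$ from the Hodge decomposition (the paper characterizes $Z_p$ directly as the orthogonal complement of $\partial^{\ast}\Omega_{p-1}$ via adjointness, while you decompose an element of $Z_p$ and kill the middle component by the same adjointness computation) and then pass to the quotient.
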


\begin{proof}
Observe first that \thinspace $Z_{p}:=\ker \partial _{p}$ is the orthogonal
complement of $\partial ^{\ast }\Omega _{p-1}$ in $\Omega _{p}$ because, for
any $u\in \Omega _{p}$,%
\begin{equation*}
u\in Z_{p}\Leftrightarrow \partial u=0\Leftrightarrow \left\langle
u,\partial ^{\ast }v\right\rangle =0\ \forall v\in \Omega
_{p-1}\Leftrightarrow u\bot \partial ^{\ast }\Omega _{p-1}.
\end{equation*}%
It follows from (\ref{Hd}) that 
\begin{equation}
Z_{p}=\partial \Omega _{p+1}\dbigoplus \mathcal{H}_{p}=B_{p}\dbigoplus 
\mathcal{H}_{p}  \label{Zp=}
\end{equation}%
whence 
\begin{equation*}
\mathcal{H}_{p}\cong Z_{p}/B_{p}=H_{p}.
\end{equation*}
\end{proof}

\begin{remark}
It follows from this argument that $\mathcal{H}_{p}$ is an orthogonal
complement of $B_{p}$ in $Z_{p}$ and that a harmonic form $u\in \mathcal{H}%
_{p}$ that corresponds to a homology class $\omega \in H_{p}$, minimizes the
norm $\left\Vert \cdot \right\Vert $ among all elements of $\omega .$
\end{remark}

\subsection{R-torsion}

\label{SecR}Let $\Omega $ be a finite chain complex of finite dimensional
linear spaces over $\mathbb{R}:$%
\begin{equation*}
0\overset{\partial }{\leftarrow }\Omega _{0}\overset{\partial }{\leftarrow }%
\Omega _{1}\overset{\partial }{\leftarrow }...\overset{\partial }{\leftarrow 
}\Omega _{p-1}\overset{\partial }{\leftarrow }\Omega _{p}\overset{\partial }{%
\leftarrow }...\overset{\partial }{\leftarrow }\Omega _{N}\overset{\partial }%
{\leftarrow }0.
\end{equation*}%
Denote $B_{p}=\partial \Omega _{p+1}$, $Z_{p}=\ker \partial |_{\Omega _{p}}$
and $H_{p}=Z_{p}/B_{p}.$

In any $\Omega _{p}$ choose a basis $\omega _{p}$ and a basis $h_{p}$ in $%
H_{p}$. For each element of $h_{p}$ choose its representative in $Z_{p}$ and
denote the resulting independent set by $\widetilde{h}_{p}.$

Let $b_{p}$ be \emph{any} basis in $B_{p}$. For each element $w\in b_{p-1}$
choose one element $v\in \partial ^{-1}w\subset \Omega _{p}$ so that $%
\partial v=w$. Let $\widetilde{b}_{p}$ be the collection of chosen elements $%
v$ so that%
\begin{equation}
b_{p-1}=\partial \widetilde{b}_{p}.  \label{bp-1}
\end{equation}%
Note that always $\widetilde{b}_{0}=\emptyset .$ Since $b_{p-1}$ is linearly
independent, the set $\widetilde{b}_{p}$ is also linearly independent.
Clearly, the union $(b_{p},\widetilde{h}_{p})$ is a basis in $Z_{p}$. Since
the subspaces $Z_{p}$ and $\limfunc{span}(\widetilde{b}_{p})$ of $\Omega
_{p} $ have a trivial intersection $\left\{ 0\right\} $, by the rank-nullity
theorem we conclude that the direct sum of these subspaces is $\Omega _{p}$.
Hence, the union $(b_{p},\widetilde{h}_{p},\widetilde{b}_{p})$ of the thee
sequences is a basis in $\Omega _{p}$.

If $U$ and $W$ are two bases in an $n$-dimensional linear space, then denote
by $\left( U/W\right) $ the transformation matrix from $W$ to $U$ and set 
\begin{equation*}
\lbrack U/W]=\left\vert \det \left( U/W\right) \right\vert .
\end{equation*}%
In the case $n=0$ set $[U/W]=1.$

Denote $\omega $ the collection $\left\{ \omega _{p}\right\} $ of the bases
in $\Omega _{p}$ and similarly let $h=\left\{ h_{p}\right\} $ be the
collection of the bases in $H_{p}.$

\begin{definition}
\label{D:defi_of_Rei_tor}The \emph{R-torsion} $\tau (\Omega ,\omega ,h)$ of
the chain complex $\Omega $ with the preferred bases $\omega $ and $h$ is a
positive real number defined by 
\begin{equation}
\log \tau (\Omega ,\omega ,h)=\sum_{p=0}^{N}(-1)^{p}\log [b_{p},\widetilde{h}%
_{p},\widetilde{b}_{p}\,/\,\omega _{p}].  \label{tau}
\end{equation}
\end{definition}

We justify this definition in the following statement.

\begin{lemma}
\label{LemR}

\begin{itemize}
\item[$\left( a\right) $] The value of $\tau \left( \Omega ,\omega ,h\right) 
$ does not depend on the choice of the bases $b_{p}$, the representatives in 
$\widetilde{b}_{p}$ and the representatives in $\widetilde{h}_{p}$ (which
justifies the notation $\tau (\Omega ,\omega ,h)$).

\item[$\left( b\right) $] If $\omega ^{\prime }$ and $h^{\prime }$ are other
collections of bases in $\Omega $ and $H$ respectively, then 
\begin{equation}
\log \tau \left( \Omega ,\omega ^{\prime },h^{\prime }\right) =\log \tau
\left( \Omega ,\omega ,h\right) +\sum_{p=0}^{N}\left( -1\right) ^{p}\left(
\log \left[ \omega _{p}/\omega _{p}^{\prime }\right] +\log \left[
h_{p}^{\prime }/h_{p}\right] \right) .  \label{log+}
\end{equation}
\end{itemize}
\end{lemma}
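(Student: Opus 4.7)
The plan is to prove $(a)$ by checking invariance under each of the three types of choices separately, and then derive $(b)$ from $(a)$ together with the multiplicativity identity $[U/W] = [U/V]\cdot[V/W]$ for relative determinants (which in turn follows from the chain rule for change-of-basis matrices, combined with $|\det AB| = |\det A||\det B|$).

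For $(a)$, I begin with the two easiest invariances. A change of representative inside $\widetilde{h}_p$ amounts to adding to each lift an element of $B_p$, hence a linear combination of the vectors of $b_p$. Expressing the new basis $(b_p,\widetilde{h}_p',\widetilde{b}_p)$ in terms of the old $(b_p,\widetilde{h}_p,\widetilde{b}_p)$ yields a block upper triangular matrix with identity diagonal blocks, so the relative determinant with respect to $\omega_p$ does not change. The same argument handles a change of representative inside $\widetilde{b}_p$: the difference is an element of $Z_p$, which lies in the span of $(b_p,\widetilde{h}_p)$, and the corresponding transformation matrix is again upper triangular with unit diagonal. Now for a change of basis $b_p \to b_p' = b_p M$ in $B_p$ by an invertible $M$, the freedom just established allows one to pick any $\widetilde{b}_{p+1}'$ with $\partial \widetilde{b}_{p+1}' = b_p'$; I choose $\widetilde{b}_{p+1}' = \widetilde{b}_{p+1} M$. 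Then the relative determinant in $\Omega_p$ scales by $|\det M|$ and the one in $\Omega_{p+1}$ scales by the same factor, so the contributions $(-1)^p\log|\det M|$ and $(-1)^{p+1}\log|\det M|$ in the sum (\ref{tau}) cancel.

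For $(b)$, I fix a common choice of $b_p$ and $\widetilde{b}_p$ for both torsions, and pass from $\widetilde{h}_p$ to $\widetilde{h}_p'$ by the same invertible matrix $L_p$ that sends $h_p$ to $h_p'$ in $H_p$, so that $[h_p'/h_p] = |\det L_p|$. By $(a)$ this choice is harmless on the $\omega'$-side. Applying multiplicativity of the relative determinant,
\begin{equation*}
[b_p,\widetilde{h}_p',\widetilde{b}_p \,/\, \omega_p'] \;=\; [b_p,\widetilde{h}_p',\widetilde{b}_p \,/\, b_p,\widetilde{h}_p,\widetilde{b}_p] \cdot [b_p,\widetilde{h}_p,\widetilde{b}_p \,/\, \omega_p] \cdot [\omega_p \,/\, \omega_p'],
\end{equation*}
and the first factor equals $|\det L_p| = [h_p'/h_p]$ because the transformation is block diagonal with identities on the $b_p$ and $\widetilde{b}_p$ blocks and $L_p$ on the $\widetilde{h}_p$ block. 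Taking logarithms, weighting by $(-1)^p$, and summing over $p$ yields formula (\ref{log+}).

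The main subtlety is entirely bookkeeping in $(a)$: altering $b_p$ forces a compatible alteration of $\widetilde{b}_{p+1}$, so the three invariances must be proved in the order I have listed — first the representative choices within $\widetilde{h}_p$ and $\widetilde{b}_p$, and only afterwards the basis $b_p$ — so that one has the freedom to rescale $\widetilde{b}_{p+1}$ by the matrix $M$ and exploit the sign cancellation between consecutive degrees. Once this order is fixed, the rest is standard determinant algebra.
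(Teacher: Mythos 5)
Your proof is correct and follows essentially the same route as the paper: block upper-triangular change-of-basis matrices, multiplicativity of relative determinants, and cancellation of the $[b_p'/b_p]$ contributions between consecutive degrees. The only difference is organizational — you split part $(a)$ into three separate invariances and cancel degree by degree, whereas the paper handles all changes at once and telescopes the alternating sum; both are valid.
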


The relation $\left[ U/W\right] =1$ for bases $U$ and $W$ is an equivalence
relation, and each equivalence class determines a \emph{volume form} in the
underlying linear space. We see from (\ref{log+}) that $\tau \left( \Omega
,\omega ,h\right) $ depends only on the volume forms determined by $\omega $
and $h$ in the spaces $\Omega _{p}$ and $H_{p}$, respectively,

\begin{proof}[Proof of Lemma \protect\ref{LemR}]
$\left( a\right) $ Let $b_{p}^{\prime }$ be another basis in $B_{p}$ with
the corresponding set $\widetilde{b}_{p}^{\prime }$, and $\widetilde{h}%
_{p}^{\prime }$ be another set of representatives of $h_{p}$. Let us first
verify that 
\begin{equation}
\lbrack b_{p}^{\prime },\widetilde{h}_{p}^{\prime },\widetilde{b}%
_{p}^{\prime }\,/\,b_{p},\widetilde{h}_{p},\widetilde{b}_{p}]=[b_{p}^{\prime
}/b_{p}][{b}_{p-1}^{\prime }/{b}_{p-1}].  \label{bhb}
\end{equation}%
Let $\widetilde{h}_{p}=\left\{ u_{1},u_{2},...\right\} $ and $\widetilde{h}%
_{p}^{\prime }=\left\{ u_{1}^{\prime },u_{2}^{\prime },...\right\} .$ Since $%
u_{i}^{\prime }$ and $u_{i}$ represent the same homology class, we have 
\begin{equation}
u_{i}^{\prime }=u_{i}+b_{i}\ \ \text{for some }b_{i}\in B_{p}.  \label{u'}
\end{equation}%
Let $\widetilde{b}_{p}=\left\{ v_{1},v_{2},...\right\} $ and $\widetilde{b}%
_{p}^{\prime }=\left\{ v_{1}^{\prime },v_{2}^{\prime },...\right\} $ so that 
\begin{equation*}
b_{p-1}=\partial \widetilde{b}_{p}=\left\{ \partial v_{1},\partial
v_{2},...\right\} \ \ \text{and\ \ \ }b_{p-1}^{\prime }=\partial \widetilde{b%
}_{p}^{\prime }=\left\{ \partial v_{1}^{\prime },\partial v_{2}^{\prime
},...\right\} .
\end{equation*}%
Since $\,b_{p-1}$ and $b_{p-1}^{\prime }$ are bases in the same subspace $%
B_{p-1}$, the transformation matrix $\left( c_{ij}\right) =\left( {b}%
_{p-1}^{\prime }/{b}_{p-1}\right) $ is well defined so that%
\begin{equation*}
\partial v_{i}^{\prime }=\sum_{j}c_{ij}\partial v_{j}.
\end{equation*}%
It follows that 
\begin{equation}
v_{i}^{\prime }=z_{i}+\sum_{j}c_{ij}v_{j}\ \ \text{for some }z_{i}\in Z_{p}.
\label{v'}
\end{equation}%
Since $Z_{p}=\limfunc{span}\left( b_{p},h_{p}\right) $, we obtain from (\ref%
{u'}) and (\ref{v'}) that 
\begin{equation*}
(b_{p}^{\prime },\widetilde{h}_{p}^{\prime },\widetilde{b}_{p}^{\prime
}\,/\,b_{p},\widetilde{h}_{p},\widetilde{b}_{p})=\left( 
\begin{array}{ccc}
(b_{p}^{\prime }/b_{p}) & \vdots & \vdots \\ 
0 & \func{id} & \vdots \\ 
0 & 0 & (b_{p-1}^{\prime }/b_{p-1})%
\end{array}%
\right)
\end{equation*}%
where the dots $\vdots $ denote the terms coming from $b_{i}$ and \thinspace 
$z_{i}$. Since this matrix is upper block-diagonal, we obtain (\ref{bhb}).

Consequently, we have 
\begin{eqnarray*}
\lbrack b_{p}^{\prime },\widetilde{h}_{p}^{\prime },\widetilde{b}%
_{p}^{\prime }\,/\,\omega _{p}] &=&[b_{p}^{\prime },\widetilde{h}%
_{p}^{\prime },\widetilde{b}_{p}^{\prime }\,/\,b_{p},\widetilde{h}_{p},%
\widetilde{b}_{p}][b_{p},\widetilde{h}_{p},\widetilde{b}_{p}\,/\,\omega _{p}]
\\
&=&[b_{p}^{\prime }/b_{p}][{b}_{p-1}^{\prime }/{b}_{p-1}][b_{p},\widetilde{h}%
_{p},\widetilde{b}_{p}\,/\,\omega _{p}].
\end{eqnarray*}%
Computing the sum in (\ref{tau}) we obtain 
\begin{eqnarray}
\sum_{p=0}^{N}(-1)^{p}\log [b_{p}^{\prime },\widetilde{h}_{p}^{\prime },%
\widetilde{b}_{p}^{\prime }\,/\,\omega _{p}] &=&\sum_{p=0}^{N}\left(
-1\right) ^{p}\log [b_{p},\widetilde{h}_{p},\widetilde{b}_{p}/\omega _{p}] 
\notag \\
&&+\sum_{p=0}^{N}\left( -1\right) ^{p}\log [b_{p}^{\prime
}/b_{p}]+\sum_{p=0}^{N}\left( -1\right) ^{p}\log [{b}_{p-1}^{\prime }/{b}%
_{p-1}].  \label{2}
\end{eqnarray}%
It remains to observe that the expression in (\ref{2}) vanishes because it
is equal to 
\begin{equation*}
\sum_{p=0}^{N}\left( -1\right) ^{p}\log [b_{p}^{\prime
}/b_{p}]+\sum_{q=-1}^{N-1}\left( -1\right) ^{q+1}\log [{b}_{q}^{\prime }/{b}%
_{q}]=\left( -1\right) ^{N}\log [b_{N}^{\prime }/b_{N}]=0.
\end{equation*}

$\left( b\right) $ Let $h_{p}=\left\{ \eta _{1},\eta _{2},...\right\} $ and $%
h_{p}^{\prime }=\left\{ \eta _{1}^{\prime },\eta _{2}^{\prime },...\right\} $
so that 
\begin{equation*}
\eta _{i}^{\prime }=\sum_{j}c_{ij}\eta _{j}
\end{equation*}%
where $\left( c_{ij}\right) =\left( h_{p}^{\prime }/h_{p}\right) .$ For the
representatives $u_{i}\in \widetilde{h}_{p}$ of $\eta _{i}$ and $%
u_{i}^{\prime }\in \widetilde{h}_{p}^{\prime }$ of $\eta _{i}^{\prime }$ in $%
Z_{p}$ we have then 
\begin{equation*}
u_{i}^{\prime }=\sum_{j}c_{ij}u_{ij}+b_{i}\ \ \text{for some }b_{i}\in B_{p}.
\end{equation*}%
It follows that 
\begin{equation*}
(b_{p},\widetilde{h}_{p}^{\prime },\widetilde{b}_{p}\,/\,b_{p},\widetilde{h}%
_{p},\widetilde{b}_{p}\,)=\left( 
\begin{array}{ccc}
\func{id} & \vdots & 0 \\ 
0 & (h_{p}^{\prime }/h_{p}) & 0 \\ 
0 & 0 & \func{id}%
\end{array}%
\right)
\end{equation*}%
where the dots $\vdots $ denote the terms coming from $b_{i}$. Hence, we
obtain%
\begin{eqnarray*}
\lbrack b_{p},\widetilde{h}_{p}^{\prime },\widetilde{b}_{p}\,/\,\omega
_{p}^{\prime }] &=&[b_{p},\widetilde{h}_{p}^{\prime },\widetilde{b}%
_{p}\,/\,b_{p},\widetilde{h}_{p},\widetilde{b}_{p}][b_{p},\widetilde{h}_{p},%
\widetilde{b}_{p}\,/\,\omega _{p}]\left[ \omega _{p}/\omega _{p}^{\prime }%
\right] \\
&=&[h_{p}^{\prime }/h_{p}][b_{p},\widetilde{h}_{p},\widetilde{b}%
_{p}\,/\,\omega _{p}]\left[ \omega _{p}/\omega _{p}^{\prime }\right] ,
\end{eqnarray*}%
whence (\ref{log+}) follows.
\end{proof}

Let us fix an inner product in each space $\Omega _{p}$ and denote by $\iota 
$ the \emph{inner product structure} in $\Omega ,$ that is, the collection
of all inner products for $p=0,...,N.$ Then we have the induced inner
product in the subspaces $B_{p}$, $Z_{p}$ and $\mathcal{H}_{p}.$ Using the
isomorphism $H_{p}\cong \mathcal{H}_{p}$ we transfer the inner product to $%
H_{p}.$ Hence, in this case we have a canonical choice of volume forms $%
\omega $ in $\Omega _{\ast }$ and $h$ in $H_{\ast }$ as we prefer
orthonormal bases $\omega _{p}$ in $\Omega _{p}$ and $h_{p}$ in $H_{p}$. In
fact, we can identify $h_{p}$ with an orthonormal basis in $\mathcal{H}_{p}$
and set $\widetilde{h}_{p}=h_{p}.$ With this choice of $\omega $ and $h$, we
define the R-torsion of $\left( \Omega ,\iota \right) $ by%
\begin{equation*}
\tau \left( \Omega ,\iota \right) =\tau \left( \Omega ,\omega ,h\right) .
\end{equation*}%
By (\ref{log+}) the right hand side does not depend on the choice of
orthonormal bases $\omega $ and $h$.

\begin{corollary}
Let $\iota $ and $\iota ^{\prime }$ be two inner product structures in $%
\Omega .$ Assume that there are positive reals $c_{p}$, $p=0,...,N$, such
that, for all $u,v\in \Omega _{p},$%
\begin{equation*}
\iota ^{\prime }\left( u,v\right) =c_{p}\iota \left( u,v\right) .
\end{equation*}%
Then 
\begin{equation}
\tau \left( \Omega ,\iota ^{\prime }\right) =\tau \left( \Omega ,\iota
\right) \prod_{p=0}^{N}c_{p}^{\frac{1}{2}\left( -1\right) ^{p}\left( \dim
\Omega _{p}-\dim H_{p}\right) }.  \label{taui'}
\end{equation}
\end{corollary}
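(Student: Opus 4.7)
The plan is to reduce everything to Lemma \ref{LemR}(b) by choosing compatible orthonormal bases for the two inner product structures.

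First I would fix an orthonormal basis $\omega_{p}$ of $\Omega_{p}$ with respect to $\iota$, and an orthonormal basis $h_{p}$ of $\mathcal{H}_{p}$ with respect to $\iota$; then by definition $\tau(\Omega,\iota)=\tau(\Omega,\omega,h)$. Since $\iota'=c_{p}\iota$ on $\Omega_{p}$, the rescaled tuples
\begin{equation*}
\omega_{p}':=c_{p}^{-1/2}\omega_{p},\qquad h_{p}':=c_{p}^{-1/2}h_{p}
\end{equation*}
are orthonormal with respect to $\iota'$ (note that the harmonic subspace $\mathcal{H}_{p}$ does not change when we rescale the inner product on $\Omega_{p}$, since $\partial$ and the condition $\partial^{\ast}u=0$ are simultaneously characterized in Lemma \ref{Lemharm}; rescaling each $\iota$ by $c_{p}$ produces the same kernel of $\partial^{\ast}$ because it scales the adjoint by a nonzero factor). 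Hence $\tau(\Omega,\iota')=\tau(\Omega,\omega',h')$.

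Next I would compute the transition determinants. From $\omega_{p}=c_{p}^{1/2}\omega_{p}'$ and $h_{p}=c_{p}^{1/2}h_{p}'$ we get
\begin{equation*}
[\omega_{p}/\omega_{p}']=c_{p}^{(\dim\Omega_{p})/2},\qquad [h_{p}'/h_{p}]=c_{p}^{-(\dim H_{p})/2},
\end{equation*}
using $\dim\mathcal{H}_{p}=\dim H_{p}$ from the isomorphism (\ref{HH}).

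Finally I would substitute into the change-of-basis formula (\ref{log+}) of Lemma \ref{LemR}(b):
\begin{equation*}
\log\tau(\Omega,\iota')=\log\tau(\Omega,\iota)+\sum_{p=0}^{N}(-1)^{p}\left(\tfrac{1}{2}(\dim\Omega_{p})\log c_{p}-\tfrac{1}{2}(\dim H_{p})\log c_{p}\right),
\end{equation*}
and exponentiate to recover (\ref{taui'}). The only mild subtlety, which is essentially the whole content of the argument, is the bookkeeping of the direction of each transition matrix and the use of $\mathcal{H}_{p}\cong H_{p}$ to identify orthonormal bases of $H_{p}$; there is no genuine obstacle, since once the orthonormal bases are written down both terms in the sum of (\ref{log+}) become pure powers of $c_{p}$.
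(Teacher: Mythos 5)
Your proposal is correct and follows essentially the same route as the paper: rescale $\iota$-orthonormal bases of $\Omega_{p}$ and $\mathcal{H}_{p}$ by $c_{p}^{-1/2}$ to get $\iota'$-orthonormal ones, note that $\mathcal{H}_{p}$ is unchanged, compute the two transition determinants $[\omega_{p}/\omega_{p}']=c_{p}^{\dim\Omega_{p}/2}$ and $[h_{p}'/h_{p}]=c_{p}^{-\dim H_{p}/2}$, and substitute into (\ref{log+}). No gaps.
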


In particular, if all $c_{p}=c$ are equal, then we obtain%
\begin{equation*}
\tau \left( \Omega ,\iota ^{\prime }\right) =\tau \left( \Omega ,\iota
\right)
\end{equation*}%
because 
\begin{equation*}
\sum_{p=0}^{N}\left( -1\right) ^{p}\dim \Omega _{p}=\sum_{p=0}^{N}\left(
-1\right) ^{p}\dim H_{p}=\chi \left( \Omega \right) ,
\end{equation*}%
where $\chi \left( \Omega \right) $ is the Euler characteristic of $\Omega .$

\begin{proof}
Since the notion of orthogonality is the same for $\iota $ and $\iota
^{\prime }$, the space $\mathcal{H}_{p}$ is also the same. If $\omega _{p}$
and $h_{p}$ are $\iota $-orthonormal bases in $\Omega _{p}$ and $\mathcal{H}%
_{p}$, respectively, then $\omega _{p}^{\prime }=\frac{1}{\sqrt{c_{p}}}%
\omega _{p}$ and $h_{p}^{\prime }=\frac{1}{\sqrt{c_{p}}}h_{p}$ are $\iota
^{\prime }$-orthonormal bases. Since 
\begin{equation*}
\left[ \omega _{p}/\omega _{p}^{\prime }\right] =c_{p}^{\dim \Omega _{p}/2}\
\ \ \text{and\ \ \ }\left[ h_{p}^{\prime }/h_{p}\right] =c_{p}^{-\dim
H_{p}/2},
\end{equation*}%
we obtain from (\ref{log+}) 
\begin{equation*}
\log \tau \left( \Omega ,\iota ^{\prime }\right) =\log \tau \left( \Omega
,\iota \right) +\frac{1}{2}\sum_{p=0}^{N}\left( -1\right) ^{p}\left( \dim
\Omega _{p}-\dim H_{p}\right) \log c_{p},
\end{equation*}%
whence (\ref{taui'}) follows.
\end{proof}

Let $\Omega $ be a chain complex that comes from a path complex $P.$ Then
the inner product in $\Omega _{p}$ can be taken from the ambient space $%
\mathcal{R}_{p}.$ The so obtained R-torsion $\tau \left( \Omega \left(
P\right) ,\iota \right) $ will also be denoted by $\tau \left( P,\iota
\right) .$

Let $\iota $ be the standard inner product (\ref{ii}) in $\mathcal{R}_{p}$
and $\iota ^{\prime }$ be the normalized inner product (\ref{iip}) in $%
\mathcal{R}_{p}.$ We set 
\begin{equation*}
\tau \left( P\right) =\tau \left( P,\iota \right) \ \ \text{and\ \ }\tau
^{\prime }\left( P\right) =\tau \left( P,\iota ^{\prime }\right) .
\end{equation*}%
In this case $c_{p}=\frac{1}{p!}$ and we obtain from (\ref{taui'}) 
\begin{equation}
\tau ^{\prime }\left( P\right) =\tau \left( P\right) \prod_{p=0}^{N}\left(
p!\right) ^{\frac{1}{2}\left( -1\right) ^{p+1}\left( \dim \Omega _{p}-\dim
H_{p}\right) }.  \label{logp!}
\end{equation}%
Denoting 
\begin{equation*}
r_{p}=\dim \Omega _{p}-\dim H_{p}
\end{equation*}%
and observing that 
\begin{equation*}
\sum_{p=0}^{N}\left( -1\right) ^{p}r_{p}=0
\end{equation*}%
and 
\begin{eqnarray*}
\sum_{p=2}^{N}\left( -1\right) ^{p+1}r_{p}\log \left( p!\right)
&=&\sum_{p=2}^{N}\left( -1\right) ^{p+1}r_{p}\sum_{k=2}^{p}\log k \\
&=&\sum_{k=2}^{N}\log k\sum_{p=k}^{N}\left( -1\right) ^{p+1}r_{p} \\
&=&\sum_{k=2}^{N}\log k\sum_{p=0}^{k-1}\left( -1\right) ^{p}r_{p} \\
&=&\sum_{k=2}^{N}\log k^{r_{0}-r_{1}+...+\left( -1\right) ^{k-1}r_{k-1}},
\end{eqnarray*}%
we obtain from (\ref{logp!})%
\begin{equation}
\tau ^{\prime }\left( P\right) =\tau \left( P\right) \left(
2^{r_{0}-r_{1}}\cdot 3^{r_{0}-r_{1}+r_{2}}\cdot
4^{r_{0}-r_{1}+r_{2}-r_{3}}\cdot ...\right) ^{1/2}.  \label{rk}
\end{equation}

\subsection{Examples}

\label{SecEx}Let us give some examples of computation of R-torsion by
definition.

\begin{example}
\label{ExI}Consider a \emph{line }digraph $G=\left( V,E\right) $ that
consists of $m$ vertices $V=\left\{ 0,1,...,m-1\right\} $ and $m-1$ arrows
having the form either $i\rightarrow i+1$ or $i+1\rightarrow i$, for $%
i=0,...,m-2.$ An example of a line digraph is shown in Fig. \ref{pic12}.%
\FRAME{ftbhFU}{3.8406in}{0.3606in}{0pt}{\Qcb{A line digraph with $m=5$}}{%
\Qlb{pic12}}{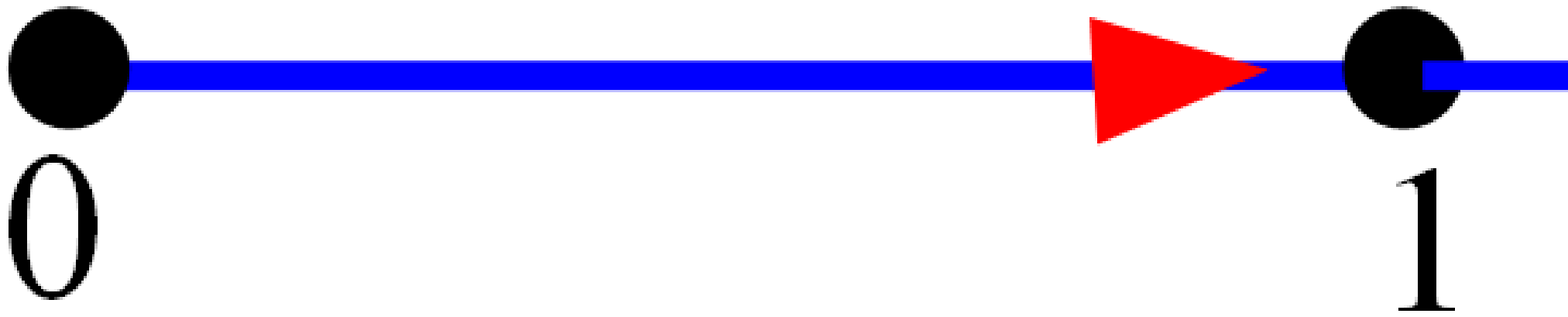}{\special{language "Scientific Word";type
"GRAPHIC";maintain-aspect-ratio TRUE;display "USEDEF";valid_file "F";width
3.8406in;height 0.3606in;depth 0pt;original-width 9.8009in;original-height
0.8934in;cropleft "0";croptop "1";cropright "1";cropbottom "0";filename
'pic12.eps';file-properties "XNPEU";}}

Denote%
\begin{equation}
\overline{e}_{i\left( i+1\right) }=\left\{ 
\begin{array}{ll}
e_{i\left( i+1\right) } & \text{if }i\rightarrow i+1 \\ 
e_{\left( i+1\right) i} & \text{if }i+1\rightarrow i%
\end{array}%
\right.  \label{ebar}
\end{equation}%
so that $\overline{e}_{i\left( i+1\right) }\in \Omega _{1}$, and set 
\begin{equation}
\sigma _{i}=\left\{ 
\begin{array}{ll}
1, & \text{if }i\rightarrow i+1 \\ 
-1, & \text{if }i+1\rightarrow i%
\end{array}%
\right.  \label{sigma}
\end{equation}%
so that 
\begin{equation*}
\partial \overline{e}_{i\left( i+1\right) }=\sigma _{i}\left(
e_{i+1}-e_{i}\right) .
\end{equation*}%
Choose the following $\iota $-orthonormal bases in $\Omega _{0}$ and $\Omega
_{1}$:%
\begin{equation*}
\omega _{0}=\left\{ e_{i}:i=0,...,m-1\right\}
\end{equation*}%
and%
\begin{equation*}
\omega _{1}=\left\{ \overline{e}_{i\left( i+1\right) }:i=0,...,m-2\right\} .
\end{equation*}%
Clearly, $\Omega _{p}=\left\{ 0\right\} $ for $p\geq 2.$ In particular, we
have $\chi \left( G\right) =1.$ Since $\dim H_{0}=1$ (as for any connected
graph) and $\dim H_{p}=0$ for $p\geq 2$, it follows that $\dim H_{1}=0.$

Since $B_{1}=\partial \Omega _{2}=\left\{ 0\right\} $, it follows that also $%
Z_{1}=\left\{ 0\right\} .$ We have $Z_{0}=\Omega _{0}$ and, hence, $\dim
B_{0}=m-1$. Choose in $B_{0}=\partial \Omega _{1}$ the basis 
\begin{equation*}
b_{0}=\left\{ \sigma _{i}\left( e_{i+1}-e_{i}\right) ,\ \ i=0,...,m-2\right\}
\end{equation*}%
and set, respectively, 
\begin{equation*}
\ \widetilde{b}_{1}=\left\{ \overline{e}_{i\left( i+1\right) },\
i=0,...,m-2\right\} .
\end{equation*}%
The orthogonal complement of $B_{0}$ in $Z_{0}$ is one-dimensional:%
\begin{equation*}
\mathcal{H}_{0}=\limfunc{span}\left\{ e_{0}+...+e_{m-1}\right\} ,
\end{equation*}%
so that 
\begin{equation*}
h_{0}=\left\{ \tfrac{1}{\sqrt{m}}\left( e_{0}+...+e_{m-1}\right) \right\} .
\end{equation*}%
We see that%
\begin{equation}
\lbrack b_{0},h_{0},\widetilde{b}_{0}\,/\,\omega _{0}]=\left\vert \det
\right\vert \left( 
\begin{array}{ccccccc}
-\sigma _{0} &  &  &  &  & 0 & \tfrac{1}{\sqrt{m}} \\ 
\sigma _{0} & -\sigma _{1} &  &  &  &  & \tfrac{1}{\sqrt{m}} \\ 
& \sigma _{1} & \ddots &  &  &  & \vdots \\ 
&  & \ddots & \ddots &  &  & \vdots \\ 
&  &  & \ddots & \ddots &  & \vdots \\ 
&  &  &  & \ddots & -\sigma _{m-2} & \vdots \\ 
0 &  &  &  &  & \sigma _{m-2} & \tfrac{1}{\sqrt{m}}%
\end{array}%
\right) =\sqrt{m},  \label{rootm}
\end{equation}%
because expanding the determinant in the last column, we obtain that is
equal to 
\begin{equation*}
\left( -1\right) ^{m+1}m\sigma _{0}...\sigma _{m-2}\frac{1}{\sqrt{m}}.
\end{equation*}%
Since $(b_{1},h_{1},\widetilde{b}_{1})\,=\omega _{1}$, it follows that 
\begin{equation*}
\tau \left( G\right) =\dprod\limits_{p=0}^{1}[b_{p},h_{p},\widetilde{b}%
_{p}\,/\,\omega _{p}]^{(-1)^{p}}=\sqrt{m}.
\end{equation*}%
For the normalized inner product $\iota ^{\prime }$ we have the same value $%
\tau ^{\prime }\left( G\right) =\sqrt{m}$ since $\Omega _{p}=\left\{
0\right\} $ for all $p\geq 2$.
\end{example}

\begin{example}
\label{Ex1}\label{Extriangle}Consider a digraph $G=\left( V,E\right) $ with
the vertex set $V=\left\{ 0,1,2\right\} $ and with the edge set $E=\left\{
01,12,02\right\} $ (Fig. \ref{pic7}). This digraph is called a \emph{triangle%
}. \FRAME{ftbhFU}{3.6092cm}{3.0482cm}{0pt}{\Qcb{A triangle digraph}}{\Qlb{%
pic7}}{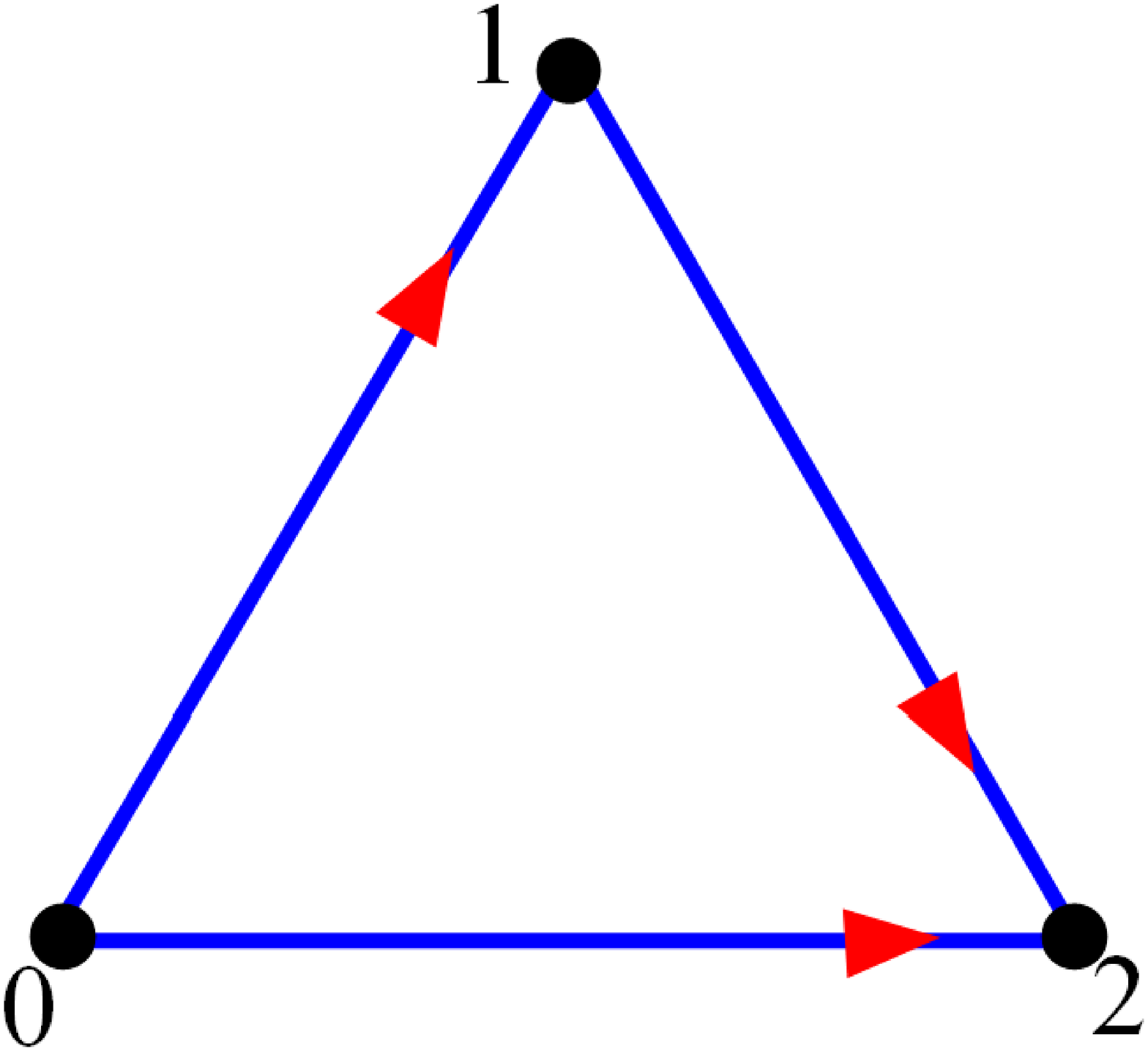}{\special{language "Scientific Word";type
"GRAPHIC";maintain-aspect-ratio TRUE;display "USEDEF";valid_file "F";width
3.6092cm;height 3.0482cm;depth 0pt;original-width 9.2258cm;original-height
7.7794cm;cropleft "0";croptop "1";cropright "1";cropbottom "0";filename
'pic7.eps';file-properties "XNPEU";}}

We have%
\begin{equation*}
\Omega _{0}=\limfunc{span}\left\{ e_{0},e_{1},e_{2}\right\} ,\ \ \Omega _{1}=%
\limfunc{span}\left\{ e_{01},e_{12},e_{02}\right\} ,\ \Omega _{2}=\left\{
e_{012}\right\}
\end{equation*}%
and $\Omega _{p}=\left\{ 0\right\} $ otherwise. Hence, 
\begin{equation*}
B_{0}=\partial \Omega _{1}=\limfunc{span}\left\{
e_{1}-e_{0},e_{2}-e_{1}\right\} ,\ \ \ B_{1}=\partial \Omega _{2}=\limfunc{%
span}\left\{ e_{01}-e_{02}+e_{12}\right\}
\end{equation*}%
and $B_{p}=\left\{ 0\right\} $ otherwise. Next, we have%
\begin{equation*}
Z_{0}=\limfunc{span}\left\{ e_{0},e_{1},e_{2}\right\} ,\ \ \ Z_{1}=\limfunc{%
span}\left\{ e_{01}-e_{02}+e_{12}\right\}
\end{equation*}%
and $Z_{p}=\left\{ 0\right\} $ otherwise. It follows that \thinspace $\dim
H_{0}=1$ and $\dim H_{p}=0$ otherwise.

We choose the following $\iota $-orthonormal bases in $\Omega _{p}$:%
\begin{equation*}
\omega _{0}=\left\{ e_{0},e_{1},e_{2}\right\} ,\ \ \ \omega _{1}=\left\{
e_{01},e_{12},e_{02}\right\} ,\ \ \omega _{2}=\left\{ e_{012}\right\} .
\end{equation*}%
Choose also 
\begin{eqnarray*}
b_{0} &=&\left\{ e_{1}-e_{0},e_{2}-e_{1}\right\} ,\ \ \ \widetilde{b}%
_{1}=\left\{ e_{01},e_{12}\right\} \\
b_{1} &=&\left\{ e_{01}-e_{02}+e_{12}\right\} ,\ \ \widetilde{b}_{2}=\left\{
e_{012}\right\} .
\end{eqnarray*}%
The orthogonal complement of $B_{0}$ in $Z_{0}$ is%
\begin{equation*}
\mathcal{H}_{0}=\limfunc{span}\left\{ e_{0}+e_{1}+e_{2}\right\} ,
\end{equation*}%
so that 
\begin{equation*}
h_{0}=\left\{ \tfrac{1}{\sqrt{3}}\left( e_{0}+e_{1}+e_{2}\right) \right\} .
\end{equation*}%
We see that%
\begin{equation*}
(b_{p},h_{p},\widetilde{b}_{p})=\left\{ 
\begin{array}{ll}
\{e_{1}-e_{0},e_{2}-e_{1},\frac{1}{\sqrt{3}}\left( e_{0}+e_{1}+e_{2}\right)
\}, & p=0 \\ 
\left\{ e_{01}-e_{02}+e_{12},e_{01},e_{12}\right\} , & p=1 \\ 
\left\{ e_{012}\right\} , & p=2.%
\end{array}%
\right.
\end{equation*}%
It follows that%
\begin{equation*}
\lbrack b_{0},h_{0},\widetilde{b}_{0}\,/\,\omega _{0}]=\left\vert \det
\right\vert \left( 
\begin{array}{ccc}
-1 & 0 & \frac{1}{\sqrt{3}} \\ 
1 & -1 & \frac{1}{\sqrt{3}} \\ 
0 & 1 & \frac{1}{\sqrt{3}}%
\end{array}%
\right) =\sqrt{3},
\end{equation*}%
\begin{equation*}
\lbrack b_{1},h_{1},\widetilde{b}_{1}\,/\,\omega _{1}]=\left\vert \det
\right\vert \left( 
\begin{array}{ccc}
1 & 1 & 0 \\ 
1 & 0 & 1 \\ 
-1 & 0 & 0%
\end{array}%
\right) =1
\end{equation*}%
and%
\begin{equation*}
\lbrack b_{2},h_{2},\widetilde{b}_{2}\,/\,\omega _{2}]=\left\vert \det
\right\vert \left( 1\right) =1.
\end{equation*}%
Hence, we obtain 
\begin{equation*}
\tau \left( G\right) =\dprod\limits_{p=0}^{2}[b_{p},h_{p},\widetilde{b}%
_{p}\,/\,\omega _{p}]^{(-1)^{p}}=\sqrt{3}.
\end{equation*}%
For the normalized inner product $\iota ^{\prime }$ we obtain from (\ref%
{taui'}) 
\begin{equation*}
\tau ^{\prime }\left( G\right) =\tau \left( G\right) \prod_{p=0}^{2}\left(
p!\right) ^{\frac{1}{2}\left( -1\right) ^{p+1}\left( \dim \Omega _{p}-\dim
H_{p}\right) }=\sqrt{3}2^{-\frac{1}{2}}=\sqrt{3/2}.
\end{equation*}
\end{example}

\begin{example}
\label{Ex2}\label{Exsquare}Consider a digraph $G=\left( V,E\right) $ with
the set of vertices $V=\left\{ 0,1,2,3\right\} $ and the set of edges $%
E=\left\{ 01,02,13,23\right\} $ (Fig. \ref{pic10}). This digraph is called a 
\emph{square}. \FRAME{ftbhFU}{3.2825cm}{2.9851cm}{0pt}{\Qcb{A square digraph}%
}{\Qlb{pic10}}{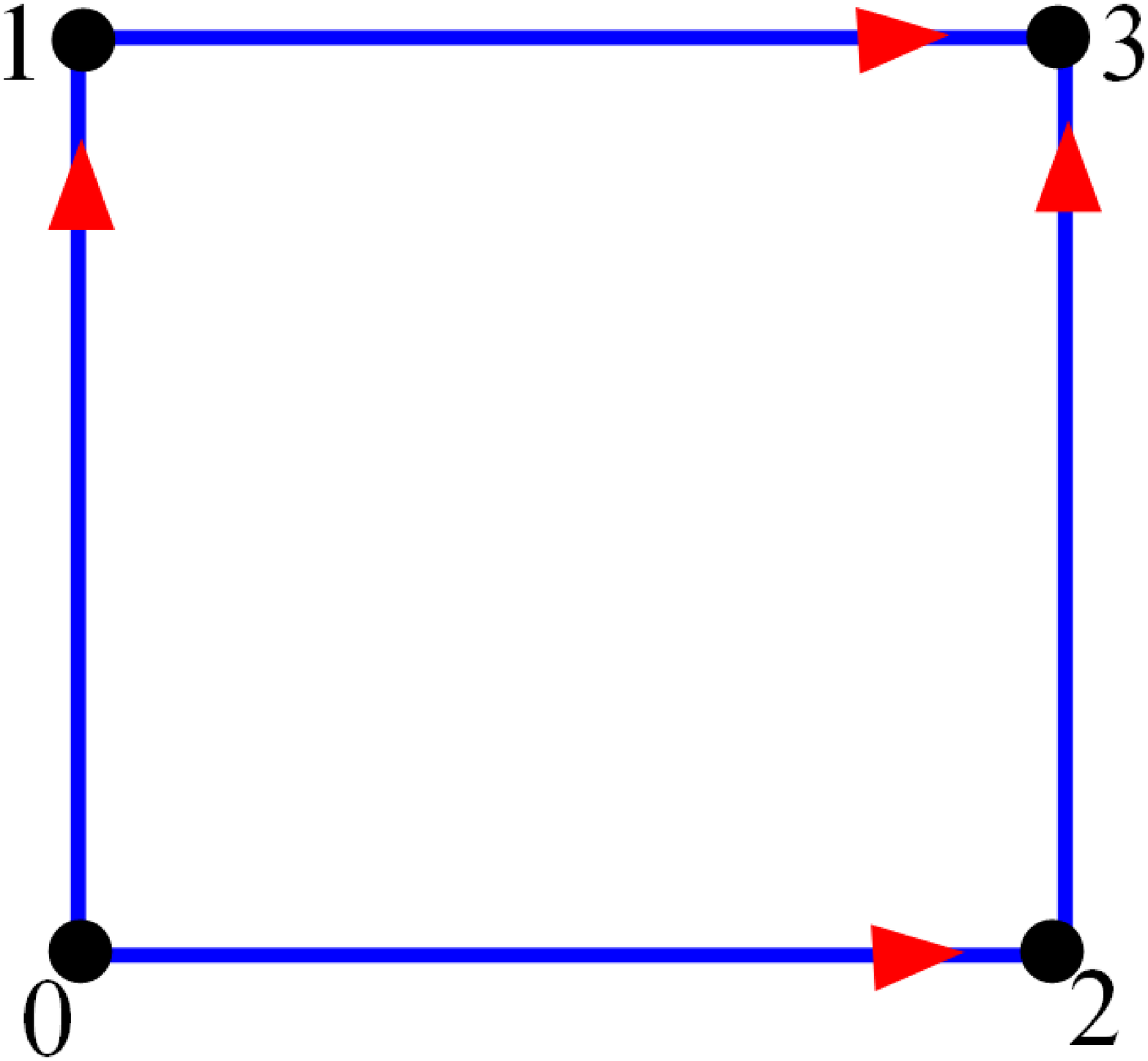}{\special{language "Scientific Word";type
"GRAPHIC";maintain-aspect-ratio TRUE;display "USEDEF";valid_file "F";width
3.2825cm;height 2.9851cm;depth 0pt;original-width 9.0816cm;original-height
8.2502cm;cropleft "0";croptop "1";cropright "1";cropbottom "0";filename
'pic10.eps';file-properties "XNPEU";}}

We have 
\begin{equation*}
\Omega _{0}=\limfunc{span}\left\{ e_{0},e_{1},e_{2},e_{3}\right\} ,\ \Omega
_{1}=\limfunc{span}\left\{ e_{01},e_{02},e_{13},e_{23}\right\} ,\ \ \Omega
_{2}=\limfunc{span}\left\{ e_{013}-e_{023}\right\}
\end{equation*}%
and $\Omega _{p}=\left\{ 0\right\} $ otherwise. Hence, 
\begin{eqnarray*}
B_{0} &=&\partial \Omega _{1}=\limfunc{span}\left\{
e_{1}-e_{0},e_{2}-e_{0},e_{3}-e_{1}\right\} \\
B_{1} &=&\partial \Omega _{2}=\limfunc{span}\left\{
e_{01}+e_{13}-e_{02}-e_{23}\right\}
\end{eqnarray*}%
and $B_{p}=\left\{ 0\right\} $ otherwise. Next we have%
\begin{equation*}
Z_{0}=\limfunc{span}\left\{ e_{0},e_{1},e_{2},e_{3}\right\} ,\ \ \ Z_{1}=%
\limfunc{span}\left\{ e_{01}+e_{13}-e_{02}-e_{23}\right\}
\end{equation*}%
and $Z_{p}=\left\{ 0\right\} $ otherwise. Consequently, $\,\dim H_{0}=1$ and 
$\dim H_{0}=0$ for $p\geq 1.$

We choose the following $\iota $-orthonormal bases in $\Omega _{p}$:%
\begin{equation*}
\omega _{0}=\left\{ e_{0},e_{1},e_{2},e_{3}\right\} ,\ \ \ \omega
_{1}=\left\{ e_{01},e_{02},e_{13},e_{23}\right\} ,\ \ \omega _{2}=\{\tfrac{1%
}{\sqrt{2}}\left( e_{013}-e_{023}\right) \}.
\end{equation*}%
Choose also 
\begin{eqnarray*}
b_{0} &=&\left\{ e_{1}-e_{0},e_{2}-e_{0},e_{3}-e_{1}\right\} ,\ \ \ 
\widetilde{b}_{1}=\left\{ e_{01},e_{02},e_{13}\right\} \\
b_{1} &=&\left\{ e_{01}-e_{02}+e_{13}-e_{23}\right\} ,\ \ \widetilde{b}%
_{2}=\left\{ e_{013}-e_{023}\right\} .
\end{eqnarray*}%
The orthogonal complement of $B_{0}$ in $Z_{0}$ is 
\begin{equation*}
\mathcal{H}_{0}=\limfunc{span}\left\{ e_{0}+e_{1}+e_{2}+e_{3}\right\}
\end{equation*}%
and we take 
\begin{equation*}
h_{0}=\left\{ \tfrac{1}{2}\left( e_{0}+e_{1}+e_{2}+e_{3}\right) \right\} .
\end{equation*}%
It follows that%
\begin{equation*}
(b_{p},h_{p},\widetilde{b}_{p})=\left\{ 
\begin{array}{ll}
\left\{ e_{1}-e_{0},e_{2}-e_{0},e_{3}-e_{1},\frac{1}{2}\left(
e_{0}+e_{1}+e_{2}+e_{3}\right) \right\} , & p=0 \\ 
\left\{ e_{01}-e_{02}+e_{13}-e_{23},e_{01},e_{02},e_{13}\right\} , & p=1 \\ 
\left\{ e_{013}-e_{023}\right\} , & p=2.%
\end{array}%
\right.
\end{equation*}%
Hence,\ 
\begin{equation*}
\lbrack b_{0},h_{0},\widetilde{b}_{0}\,/\,\omega _{0}]=\left\vert \det
\right\vert \left( 
\begin{array}{cccc}
-1 & -1 & 0 & \frac{1}{2} \\ 
1 & 0 & -1 & \frac{1}{2} \\ 
0 & 1 & 0 & \frac{1}{2} \\ 
0 & 0 & 1 & \frac{1}{2}%
\end{array}%
\right) =2,
\end{equation*}%
\begin{equation*}
\lbrack b_{1},h_{1},\widetilde{b}_{1}\,/\,\omega _{1}]=\left\vert \det
\right\vert \left( 
\begin{array}{cccc}
1 & 1 & 0 & 0 \\ 
-1 & 0 & -1 & 0 \\ 
1 & 1 & 0 & 1 \\ 
-1 & 0 & 0 & 0%
\end{array}%
\right) =1
\end{equation*}%
\begin{equation*}
\lbrack b_{2},h_{2},\widetilde{b}_{2}\,/\,\omega _{2}]=\left\vert \det
\right\vert (\sqrt{2})=\sqrt{2},
\end{equation*}%
and we obtain%
\begin{equation*}
\tau \left( G\right) =\dprod\limits_{p=0}^{2}[b_{p},h_{p},\widetilde{b}%
_{p}\,/\,\omega _{p}]^{(-1)^{p}}=2\sqrt{2}.
\end{equation*}%
For the normalized inner product $\iota ^{\prime }$ we obtain from (\ref%
{taui'}) 
\begin{equation*}
\tau ^{\prime }\left( G\right) =\tau \left( G\right) \prod_{p=0}^{2}\left(
p!\right) ^{\frac{1}{2}\left( -1\right) ^{p+1}\left( \dim \Omega _{p}-\dim
H_{p}\right) }=2\sqrt{2}2^{-\frac{1}{2}}=2.
\end{equation*}
\end{example}

Note that the triangle and square digraphs have the same homology groups and
are even homotopy equivalent (see \cite{Grigoryan-Lin-Muranov-Yau2014}) but
their torsions are different. Moreover, the torsion is not preserved by
covering mappings between digraphs, which are surjective mappings that
preserve arrows. For example, consider a mapping $\Phi :X\rightarrow Y$ of a
square $X$ on Fig. \ref{pic10} onto a line digraph $Y=\{0\rightarrow
1\rightarrow 2\}$ such that $\Phi \left( 0\right) =0$,\ $\Phi \left(
1\right) =\Phi \left( 2\right) =1$ and $\Phi \left( 3\right) =2,$ which is
obviously covering but $\tau \left( X\right) =2\sqrt{2}$ while $\tau \left(
Y\right) =\sqrt{3}.$

\begin{example}
\label{Excycle}We say that a digraph $G=\left( V,E\right) $ is \emph{cyclic}
if it is connected (as an undirected graph), every vertex had the degree $2,$
and there are no double arrows. For example, the triangle from Example \ref%
{Ex1} and the square from Example \ref{Ex2} are cyclic.

Here we assume that $G$ is neither triangle nor square. Some examples of
such digraphs are shown on Fig. \ref{pic16}. \FRAME{ftbhFU}{12.2086cm}{%
2.9311cm}{0pt}{\Qcb{Three cyclic digraphs with $3$, $4$ and $6$ vertices}}{%
\Qlb{pic16}}{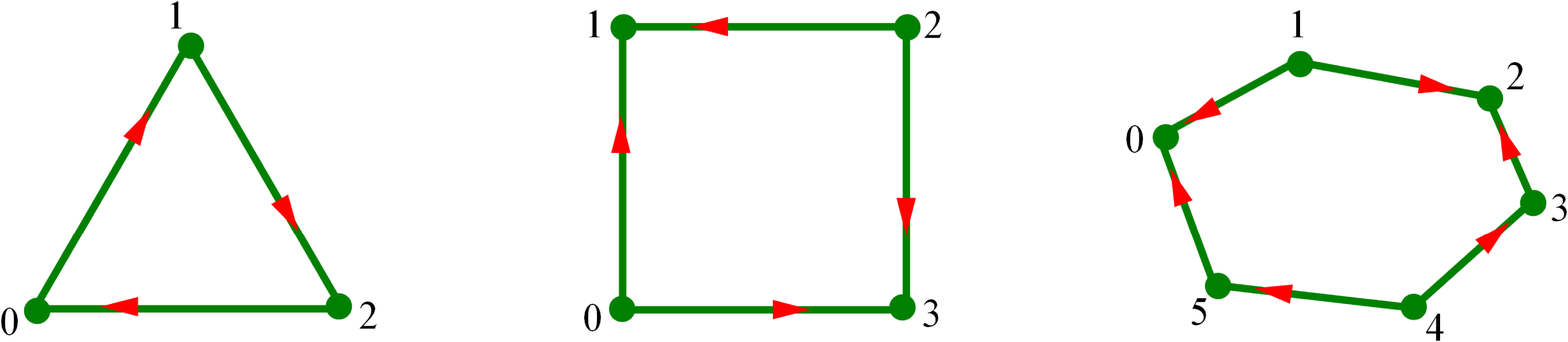}{\special{language "Scientific Word";type
"GRAPHIC";maintain-aspect-ratio TRUE;display "USEDEF";valid_file "F";width
12.2086cm;height 2.9311cm;depth 0pt;original-width 11.5778cm;original-height
8.9013cm;cropleft "0";croptop "1";cropright "1";cropbottom "0";filename
'pic16.eps';file-properties "XNPEU";}}

Note that a triangular digraph on Fig. \ref{pic16} is not a triangle in the
sense of Example \ref{Ex1} because of different orientation of the arrows,
and the quadrilateral digraph here is not a square for the same reason.

For a cyclic digraph that is neither triangle nor square, it is known that $%
\Omega _{p}\left( G\right) =\left\{ 0\right\} $ and $H_{p}\left( G\right)
=\left\{ 0\right\} $ for all $p\geq 2$, whereas 
\begin{equation*}
\dim H_{0}\left( G\right) =\dim H_{1}\left( G\right) =1
\end{equation*}%
and, hence, $\chi \left( G\right) =0$ (see \cite[Sect. 4.5]%
{Grigoryan-Lin-Muranov-Yau2013}). Assume that $G$ has $m$ vertices $%
0,1,...,m-1$ that we identify with residues \thinspace $\func{mod}m.$ The
numeration of vertices can be chosen so that all arrows have the form either 
$i\rightarrow i+1$ or $i+1\rightarrow i$, for $i=0,....,m-1.$

Let us use notations $\overline{e}_{i\left( i+1\right) }$ from (\ref{ebar})
and $\sigma _{i}$ from (\ref{sigma}) so that $\overline{e}_{i\left(
i+1\right) }\in \Omega _{1}$ and 
\begin{equation*}
\partial \overline{e}_{i\left( i+1\right) }=\sigma _{i}\left(
e_{i+1}-e_{i}\right) .
\end{equation*}%
Choose the following $\iota $-orthonormal bases in $\Omega _{0}$ and $\Omega
_{1}$:%
\begin{equation*}
\omega _{0}=\left\{ e_{i}:i=0,...,m-1\right\}
\end{equation*}%
and%
\begin{equation*}
\omega _{1}=\left\{ \overline{e}_{i\left( i+1\right) }:i=0,...,m-1\right\} .
\end{equation*}%
Observe that $Z_{0}=\Omega _{0}$ and 
\begin{equation*}
Z_{1}=\ker \partial |_{\Omega _{1}}=\limfunc{span}\left\{
\sum_{i=0}^{m-1}\sigma _{i}\overline{e}_{i\left( i+1\right) }\right\}
\end{equation*}%
because%
\begin{equation*}
\partial \left( \sum_{i}\alpha _{i}\overline{e}_{i\left( i+1\right) }\right)
=\sum_{i}\alpha _{i}\sigma _{i}\left( e_{i+1}-e_{i}\right) =\sum_{i}\left(
\alpha _{i-1}\sigma _{i-1}-\alpha _{i}\sigma _{i}\right) e_{i},
\end{equation*}%
which vanishes if $\alpha _{i}$ is proportional to $1/\sigma _{i}=\sigma
_{i}.$ Then $B_{0}=\partial \Omega _{1}$ has dimension $m-1$ and we choose 
\begin{equation*}
b_{0}=\left\{ \sigma _{i}\left( e_{i+1}-e_{i}\right) ,\ \ i=0,...,m-2\right\}
\end{equation*}%
and, respectively, 
\begin{equation*}
\ \widetilde{b}_{1}=\left\{ \overline{e}_{i\left( i+1\right) },\
i=0,...,m-2\right\} .
\end{equation*}%
The orthogonal complement of $B_{0}$ in $Z_{0}=\Omega _{0}$ is%
\begin{equation*}
\mathcal{H}_{0}=\limfunc{span}\left\{ e_{0}+...+e_{m-1}\right\} ,
\end{equation*}%
so that 
\begin{equation*}
h_{0}=\{\tfrac{1}{\sqrt{m}}\left( e_{0}+...+e_{m-1}\right) \}.
\end{equation*}%
Hence, as in (\ref{rootm}), we obtain%
\begin{equation*}
\lbrack b_{0},h_{0},\widetilde{b}_{0}\,/\,\omega _{0}]=\sqrt{m}.
\end{equation*}%
Next, we have $B_{1}=\partial \Omega _{2}=\left\{ 0\right\} $ whence $%
b_{1}=\emptyset $, $\mathcal{H}_{1}=Z_{1}$ and 
\begin{equation*}
h_{1}=\{\tfrac{1}{\sqrt{m}}\sum_{i=0}^{m-1}\sigma _{i}\overline{e}_{i\left(
i+1\right) }\}.
\end{equation*}%
\textbf{\ }We see that 
\begin{equation*}
\lbrack b_{1},h_{1},\widetilde{b}_{1}\,/\,\omega _{1}]=\left\vert \det
\right\vert \left( 
\begin{array}{cccccc}
\tfrac{1}{\sqrt{m}}\sigma _{0} & 1 &  &  &  & 0 \\ 
\tfrac{1}{\sqrt{m}}\sigma _{1} &  & 1 &  &  &  \\ 
\vdots &  &  & \ddots &  &  \\ 
\vdots &  &  &  & \ddots &  \\ 
\vdots &  &  &  &  & 1 \\ 
\frac{1}{\sqrt{m}}\sigma _{m-1} & 0 &  &  &  & 0%
\end{array}%
\right) =\frac{1}{\sqrt{m}}.
\end{equation*}%
It follows that%
\begin{equation*}
\tau \left( G\right) =\dprod\limits_{p=0}^{1}[b_{p},h_{p},\widetilde{b}%
_{p}\,/\,\omega _{p}]^{(-1)^{p}}=m
\end{equation*}%
and also $\tau ^{\prime }\left( G\right) =m.$
\end{example}

\subsection{Analytic torsion}

Let $\Omega $ be a chain complex as above and $\iota $ be an inner product
structure on $\Omega $. It is easy to check from (\ref{Dede}) that $\Delta
_{p}$ is a self-adjoint non-negative definite operator on $\Omega _{p}$.
Hence, its eigenvalues are non-negative reals, denote them by $\left\{
\lambda _{i}\right\} _{i=1}^{\dim \Omega _{p}}.$\ The zeta function $\zeta
_{p}\left( s\right) $ of $\Delta _{p}$ is defined by 
\begin{equation*}
\zeta _{p}(s)=\sum_{\lambda _{i}>0}\frac{1}{\lambda _{i}^{s}}.
\end{equation*}

\begin{definition}
\label{D:defi_of_ana_tor} The \emph{analytic torsion} $T(\Omega ,\iota )$ of
the chain complex $\Omega $ with an inner product structure $\iota $ is
defined by 
\begin{equation}
\log T(\Omega ,\iota )=\frac{1}{2}\sum_{p=0}^{N}(-1)^{p}\,p\,\zeta
_{p}^{\prime }(0).  \label{logT}
\end{equation}
\end{definition}

The next theorem is one of the main results of this paper.

\begin{theorem}
\label{T:main_thm}\label{L:log_tau(G)}We have\ \ 
\begin{equation*}
\tau (\Omega ,\iota )=T\left( \Omega ,\iota \right) .
\end{equation*}
\end{theorem}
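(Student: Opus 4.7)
My plan is to reduce both $\log\tau(\Omega,\iota)$ and $\log T(\Omega,\iota)$ to the same $\mathbb{Z}$-linear combination of the quantities $\log\det{}'(\partial_p^{\ast}\partial_p)$, where $\det{}'$ denotes the product of the positive eigenvalues. The two inputs are the Hodge decomposition (\ref{Hd}) and the elementary singular-value fact that, for every $p$, the operators $\partial_p^{\ast}\partial_p$ on $\Omega_p$ and $\partial_p\partial_p^{\ast}$ on $\Omega_{p-1}$ share the same positive spectrum with multiplicities.

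For the analytic torsion, the first step is to notice that $\Delta_p$ preserves each of the three summands in $\Omega_p=B_p\oplus\partial^{\ast}\Omega_{p-1}\oplus\mathcal{H}_p$: on $B_p=\partial\Omega_{p+1}$ the term $\partial^{\ast}\partial$ vanishes since $\partial^2=0$, so $\Delta_p=\partial\partial^{\ast}$ there; symmetrically, $\Delta_p=\partial^{\ast}\partial$ on $\partial^{\ast}\Omega_{p-1}$; and $\Delta_p=0$ on $\mathcal{H}_p$ by Lemma~\ref{Lemharm}. Let $\eta_p(s)=\sum_j(\mu_j^{(p)})^{-s}$ be the zeta function of the positive eigenvalues $\mu_j^{(p)}$ of $\partial_p^{\ast}\partial_p$. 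Applying the spectrum-sharing fact to the $B_p$-summand, the positive spectrum of $\Delta_p$ is the multiset $\{\mu_j^{(p)}\}\sqcup\{\mu_j^{(p+1)}\}$, hence $\zeta_p(s)=\eta_p(s)+\eta_{p+1}(s)$. Substituting into (\ref{logT}) and reindexing $q=p+1$ in the second summand, the telescoping identity $(-1)^p p+(-1)^{p-1}(p-1)=(-1)^p$ produces
\[
2\log T(\Omega,\iota)=\sum_{p=0}^{N}(-1)^p\eta_p'(0)=-\sum_{p=0}^{N}(-1)^p\log\det{}'(\partial_p^{\ast}\partial_p),
\]
with boundary contributions at $p=0$ and $p=N$ vanishing since $\partial_0=0$ and $\Omega_{N+1}=0$.

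For the R-torsion, Lemma~\ref{LemR}(a) lets me pick bases freely. I take $\omega_p$ orthonormal in $\Omega_p$, $b_p$ orthonormal in $B_p$, and $\widetilde{h}_p=h_p$ orthonormal in $\mathcal{H}_p$. Since the restriction $A_p:=\partial_p|_{\partial^{\ast}\Omega_{p-1}}\colon\partial^{\ast}\Omega_{p-1}\to B_{p-1}$ is a linear isomorphism, there is a unique $\widetilde{b}_p\subset\partial^{\ast}\Omega_{p-1}$ satisfying $\partial\widetilde{b}_p=b_{p-1}$, which is a valid choice in the definition of $\tau$. The three summands of the Hodge decomposition being pairwise orthogonal and the first two bases orthonormal, the Gram matrix of $(b_p,\widetilde{h}_p,\widetilde{b}_p)$ is block-diagonal and reduces to the Gram matrix $G_p$ of $\widetilde{b}_p$. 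Because $\partial_p^{\ast}y\in\partial^{\ast}\Omega_{p-1}$ whenever $y\in B_{p-1}$, the adjoint of $A_p$ is $A_p^{\ast}=\partial_p^{\ast}|_{B_{p-1}}$, and a short linear-algebra computation yields $G_p^{-1}=A_pA_p^{\ast}=\partial_p\partial_p^{\ast}|_{B_{p-1}}$ expressed in the orthonormal basis $b_{p-1}$. By the spectrum-sharing fact, $\det(A_pA_p^{\ast})=\det{}'(\partial_p^{\ast}\partial_p)$, so
\[
[b_p,\widetilde{h}_p,\widetilde{b}_p\,/\,\omega_p]=\sqrt{\det G_p}=\det{}'(\partial_p^{\ast}\partial_p)^{-1/2}.
\]
Substituting into (\ref{tau}) gives $\log\tau(\Omega,\iota)=-\tfrac12\sum_{p=0}^{N}(-1)^p\log\det{}'(\partial_p^{\ast}\partial_p)$, which matches the expression obtained above for $\log T(\Omega,\iota)$.

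The main obstacle I anticipate is the Gram matrix step on the R-torsion side: one must carefully identify $A_p^{\ast}$ with the restriction of $\partial_p^{\ast}$ to $B_{p-1}$ (using that this restriction indeed lands in $\partial^{\ast}\Omega_{p-1}$) and then invoke the shared-spectrum identity to rewrite $\det(A_pA_p^{\ast})$ on $B_{p-1}$ as $\det{}'(\partial_p^{\ast}\partial_p)$ on $\Omega_p$. Everything else is elementary linear algebra and bookkeeping, including the verification that the small cases $p=0$ (where $\widetilde{b}_0=\emptyset$ and $\det{}'(\partial_0^{\ast}\partial_0)=1$) and $p=N$ (where $b_N=\emptyset$ and $\eta_{N+1}\equiv0$) cause no discrepancy.
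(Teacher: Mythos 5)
Your proposal is correct and follows essentially the same route as the paper: both rest on the Hodge decomposition, the observation that the positive spectrum of $\Delta_p$ splits into the positive spectra of $\partial_p^{\ast}\partial_p$ and $\partial_{p+1}^{\ast}\partial_{p+1}$, and the same telescoping of the alternating sums. The only cosmetic difference is that the paper evaluates $[b_p,\widetilde h_p,\widetilde b_p/\omega_p]$ by choosing $b_{p-1}$ to be an eigenbasis of $\Delta_{p-1}|_{B_{p-1}}$ (so the lifted vectors $\widetilde b_p^i$ are orthogonal with explicit norms $(\beta_{p-1}^i)^{-1/2}$), whereas you keep $b_{p-1}$ arbitrary orthonormal and get the same value via the Gram determinant $\det(A_pA_p^{\ast})^{-1}$.
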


This theorem was proved in \cite[Proposition 1.7]{Ray-Singer1971} for a
special case when the homology groups $H_{p}$ are trivial. We use a
modification of the argument of \cite{Ray-Singer1971} that works with
arbitrary homology groups.

\begin{proof}
Observe that%
\begin{equation*}
\zeta _{p}^{\prime }(s)=-\sum_{\lambda _{i}>0}\left( \log \lambda
_{i}\right) \lambda _{i}^{-s},
\end{equation*}%
whence%
\begin{equation}
\zeta _{p}^{\prime }\left( 0\right) =-\sum_{\lambda _{i}>0}\left( \log
\lambda _{i}\right) =-\log D_{p},  \label{xi'}
\end{equation}%
where 
\begin{equation*}
D_{p}:=\dprod\limits_{\lambda _{i}>0}\lambda _{i}
\end{equation*}%
is the determinant of $\Delta _{p}$ restricted on the direct sum of the
eigenspaces with positive eigenvalues. In the view of (\ref{logT}) and (\ref%
{xi'}), it suffices to prove that 
\begin{equation}
\log \tau (\Omega ,\iota )=-\frac{1}{2}\sum_{p=0}^{N}(-1)^{p}\,p\,\log D_{p}.
\label{logtau}
\end{equation}%
As before, we use notations $B_{p}=\func{im}\partial _{p+1}=\partial \Omega
_{p+1}$ and $Z_{p}=\ker \partial _{p}$ so that $H_{p}=Z_{p}/B_{p}$. Since
any element of $u\in B_{p}$ has the form $u=\partial v$ for some $v\in
\Omega _{p+1}$, we have%
\begin{equation}
\Delta _{p}u=\partial ^{\ast }\partial \partial v+\partial \partial ^{\ast
}u=\partial \left( \partial ^{\ast }u\right) \in B_{p}.  \label{uB}
\end{equation}%
Hence, $B_{p}$ is an invariant subspace of $\Delta _{p}$. Therefore, there
exists an orthonormal basis $b_{p}=\{b_{p}^{i}\}$ of $B_{p}$ that consists
of the eigenvectors of $\Delta _{p}$: 
\begin{equation*}
\Delta _{p}b_{p}^{i}=\beta _{p}^{i}b_{p}^{i},
\end{equation*}%
where $\beta _{p}^{i}$ are the corresponding eigenvalues. Since by (\ref{Hd}%
) $B_{p}$ is orthogonal to $\mathcal{H}_{p}$ and all the eigenvectors of $%
\Delta _{p}$ with eigenvalue $0$ belong to $\mathcal{H}_{p}$, we have $\beta
_{p}^{i}>0.$

By (\ref{uB}) we have $\Delta _{p}b_{p}^{i}=\partial \partial ^{\ast
}b_{p}^{i}$, whence%
\begin{equation}
\partial \partial ^{\ast }b_{p}^{i}=\beta _{p}^{i}b_{p}^{i}.  \label{eigen}
\end{equation}%
Set 
\begin{equation*}
\widetilde{b}_{p}^{i}:=\frac{1}{\beta _{p-1}^{i}}\partial ^{\ast
}b_{p-1}^{i}\in \Omega _{p}.
\end{equation*}%
We have by (\ref{eigen}) 
\begin{equation*}
\partial \widetilde{b}_{p}^{i}=\frac{1}{\beta _{p-1}^{i}}\partial \partial
^{\ast }b_{p-1}^{i}=\frac{1}{\beta _{p-1}^{i}}\cdot \beta
_{p-1}^{i}b_{p-1}^{i}=b_{p-1}^{i}
\end{equation*}%
so that the sequences $\widetilde{b}_{p}=\{\widetilde{b}_{p}^{i}\}$ and $%
b_{p-1}=\{b_{p-1}^{i}\}$ satisfy the identity (\ref{bp-1}) and, hence, can
be used in the definition of R-torsion. Since also 
\begin{equation*}
\partial ^{\ast }\widetilde{b}_{p}^{i}=\frac{1}{\beta _{p-1}^{i}}\partial
^{\ast }\partial ^{\ast }b_{p-1}^{i}=0,
\end{equation*}%
we obtain

\begin{equation*}
\Delta _{p}\widetilde{b}_{p}^{i}=\partial ^{\ast }\partial \widetilde{b}%
_{p}^{i}+\partial \partial ^{\ast }\widetilde{b}_{p}^{i}=\partial ^{\ast
}b_{p-1}^{i}+0=\beta _{p-1}^{i}\widetilde{b}_{p}^{i}.
\end{equation*}%
Hence, $\widetilde{b}_{p}^{i}$ are the eigenvectors of $\Delta _{p}$ with
eigenvalues $\beta _{p-1}^{i}.$ Moreover, the sequence $\,\{\widetilde{b}%
_{p}^{i}\}$ is orthogonal because by (\ref{eigen}) for $i\neq j$%
\begin{eqnarray*}
\langle \widetilde{b}_{p}^{i},\widetilde{b}_{p}^{j}\rangle &=&\frac{1}{\beta
_{p-1}^{i}\beta _{p-1}^{j}}\langle \partial ^{\ast }b_{p-1}^{i},\partial
^{\ast }b_{p-1}^{j}\rangle \\
&=&\frac{1}{\beta _{p-1}^{i}\beta _{p-1}^{j}}\langle \partial \partial
^{\ast }b_{p-1}^{i},b_{p-1}^{j}\rangle \\
&=&\frac{1}{\beta _{p-1}^{j}}\langle b_{p-1}^{i},b_{p-1}^{j}\rangle \\
&=&0.
\end{eqnarray*}%
In the case $i=j$ we obtain similarly 
\begin{equation*}
\Vert \widetilde{b}_{p}^{i}\Vert ^{2}=\frac{1}{\beta _{p-1}^{i}}\langle
b_{p-1}^{i},b_{p-1}^{i}\rangle =\frac{1}{\beta _{p-1}^{i}}.
\end{equation*}%
Note also that the vectors $b_{p}^{i}$ and $\widetilde{b}_{p}^{j}$ are
necessarily orthogonal since 
\begin{equation*}
\langle b_{p}^{i},\widetilde{b}_{p}^{j}\rangle =\frac{1}{\beta _{p-1}^{i}}%
\langle b_{p}^{i},\partial ^{\ast }b_{p-1}^{j}\rangle =\frac{1}{\beta
_{p-1}^{i}}\langle \partial b_{p}^{i},b_{p-1}^{j}\rangle =0.
\end{equation*}%
Let $h_{p}=\{h_{p}^{i}\}$ be an orthonormal basis of $\mathcal{H}_{p}.$ Then
the following sequence 
\begin{equation}
\omega _{p}=b_{p}\cup \{\sqrt{\beta _{p-1}^{i}}\,\widetilde{b}_{p}^{i}\}\cup
h_{p}  \label{om}
\end{equation}%
consists of the eigenvectors of $\Delta _{p}$ and is orthonormal. By
construction, this sequence is a basis in $\Omega _{p}$ (see Section \ref%
{SecR}). It follows that all the positive eigenvalues of $\Delta _{p}$ are%
\begin{equation*}
\left\{ \beta _{p-1}^{i}\right\} \cup \left\{ \beta _{p}^{i}\right\} ,
\end{equation*}%
whence 
\begin{equation*}
D_{p}=\prod_{i}\beta _{p-1}^{i}\prod_{i}\beta _{p}^{i}.
\end{equation*}%
Setting 
\begin{equation*}
L_{k}:=\log \prod_{i}\beta _{k}^{i}
\end{equation*}%
we obtain%
\begin{equation*}
\log D_{p}=L_{p-1}+L_{p}.
\end{equation*}%
Using that $L_{N}=0$, we obtain 
\begin{eqnarray}
L_{p-1} &=&\left( L_{p-1}+L_{p}\right) -\left( L_{p}+L_{p+1}\right) +... 
\notag \\
&=&\sum_{q=p}^{N}\left( -1\right) ^{q-p}\left( L_{q-1}+L_{q}\right)
=\sum_{q=p}^{N}(-1)^{q-p}\log D_{q}{.}  \label{L}
\end{eqnarray}%
It follows from (\ref{om}) that 
\begin{equation*}
\lbrack b_{p},\widetilde{b}_{p},h_{p}\,/\,\omega _{p}]=\prod_{i}\left( \beta
_{p-1}^{i}\right) ^{-1/2}=\left( L_{p-1}\right) ^{-1/2}.
\end{equation*}%
Using the definition of $\tau \left( \Omega ,\iota \right) $ and (\ref{L}),
we obtain 
\begin{eqnarray*}
\log \tau (\Omega ,\iota ) &=&\sum_{p=0}^{N}(-1)^{p}\log [b_{p},\widetilde{b}%
_{p},h_{p}\,/\,\omega _{p}] \\
&=&-\frac{1}{2}\sum_{p=1}^{N}(-1)^{p}L_{p-1} \\
&=&-\frac{1}{2}\sum_{p=1}^{N}(-1)^{p}\sum_{q=p}^{N}(-1)^{q-p}\log D_{q} \\
&=&-\frac{1}{2}\sum_{q=1}^{N}\sum_{p=1}^{q}(-1)^{q}\log D_{q} \\
&=&-\frac{1}{2}\sum_{q=1}^{N}(-1)^{q}q\log D_{q}{,}
\end{eqnarray*}%
which finishes the proof of (\ref{logtau})\textbf{.}
\end{proof}

\section{Cartesian product of path complexes}

\label{SecP}

\subsection{Product of paths}

Given two finite sets $X,Y$, consider their Cartesian product $Z=X\times Y.$
Let $z=z_{0}z_{1}...z_{r}$ be a regular elementary $r$-path on $Z$, where $%
z_{k}=\left( x_{k},y_{k}\right) $ with $x_{k}\in X$ and $y_{k}\in Y$.

\begin{definition}
We say that the path $z$ is \emph{step-like} if, for any $k=1,...,r$, either 
$x_{k-1}=x_{k}$ or $y_{k-1}=y_{k}$. In fact, exactly one of these conditions
holds as $z$ is regular.
\end{definition}

Any step-like path $z$ on $Z$ determines by projection regular elementary
paths $x$ on $X$ and $y$ on $Y$. More precisely, $x$ is obtained from $z$ by
taking the sequence of all $X$-components of the vertices of $z$ and then by
collapsing in it any subsequence of repeated vertices to one vertex. The
same rule applies to $y$. By construction, the projections $x$ and $y$ are
regular elementary paths on $X$ and $Y$, respectively. If the projections of 
$z=z_{0}...z_{r}$ are $x=x_{0}...x_{p}$ and $y=y_{0}...y_{q}$ then $p+q=r$
(cf. Fig. \ref{pic3022}(left)).

\FRAME{ftbhFU}{12.1275cm}{4.614cm}{0pt}{\Qcb{Left: a step-like path $z$ and
its projections $x$ and $y$. Right: a staircase $S\left( z\right) $ and its
elevation $L\left( z\right) $ (here $L\left( z\right) =30$)$.$}}{\Qlb{pic3022%
}}{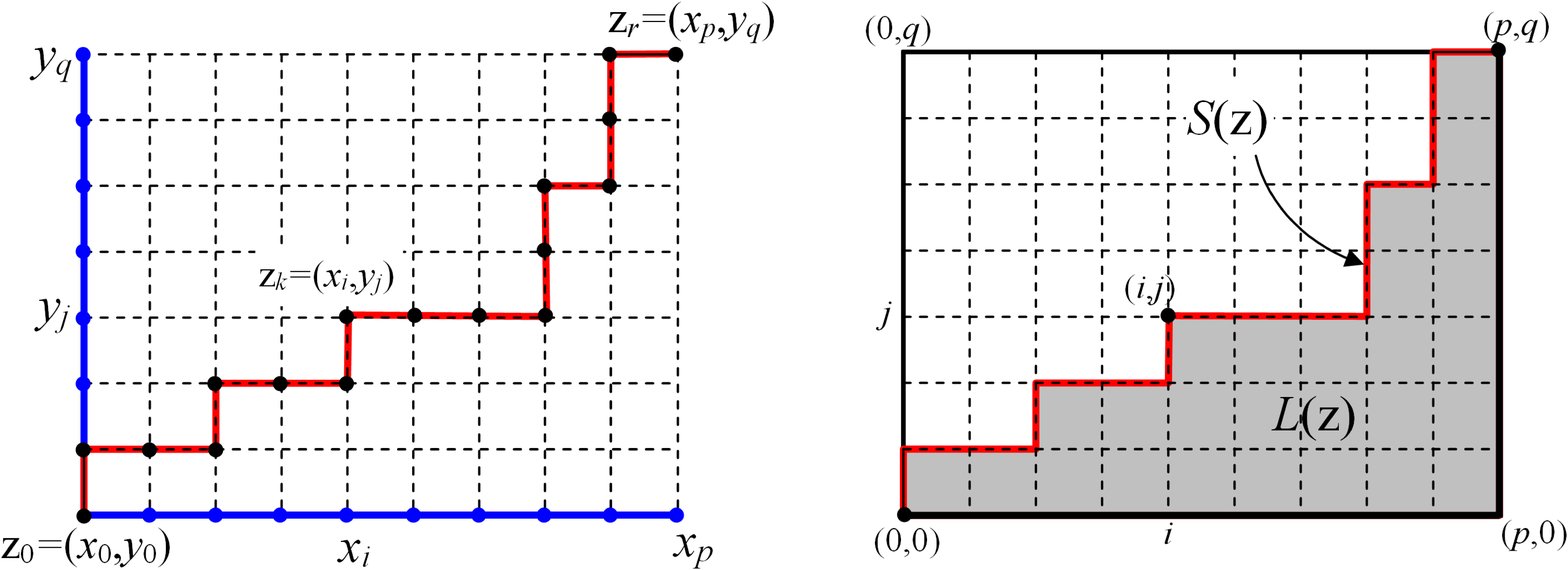}{\special{language "Scientific Word";type
"GRAPHIC";maintain-aspect-ratio TRUE;display "USEDEF";valid_file "F";width
12.1275cm;height 4.614cm;depth 0pt;original-width 31.1423cm;original-height
11.8009cm;cropleft "0";croptop "1";cropright "1";cropbottom "0";filename
'pic3022.eps';file-properties "XNPEU";}}

Every vertex $z_{k}=\left( x_{i},y_{j}\right) $ of a step-like path $z$ can
be represented as a point $\left( i,j\right) $ of $\mathbb{Z}^{2}$ so that
the whole path $z$ is represented by a \emph{staircase }$S\left( z\right) $
in $\mathbb{Z}^{2}$ connecting the points $\left( 0,0\right) $ and $\left(
p,q\right) $.

\begin{definition}
Define the \emph{elevation }$L\left( z\right) $ of the path $z$ as the
number of cells in $\mathbb{Z}_{+}^{2}$ below the staircase $S\left(
z\right) $ (the shaded area on Fig. \ref{pic3022}(right)).
\end{definition}

For given elementary regular $p$-path $x$ on $X$ and $q$-path $y$ on $Y$,
denote by $\Pi _{x,y}$ the set of all step-like paths $z$ on $Z$ whose
projections on $X$ and $Y$ are $x$ and $y$, respectively.

\begin{definition}
For regular elementary paths $e_{x}$ on $X$ and $e_{y}$ on $Y$ define their
cross product $e_{x}\times e_{y}$ as a path on $Z$ as follows: 
\begin{equation}
e_{x}\times e_{y}=\sum_{z\in \Pi _{x,y}}\left( -1\right) ^{L\left( z\right)
}e_{z}.  \label{uvz1}
\end{equation}%
Then extend by linearly the definition of $u\times v$ to all regular paths $%
u $ on $X$ and $v$ on $Y.$
\end{definition}

Clearly, if $u\in \mathcal{R}_{p}\left( X\right) $ and $v\in \mathcal{R}%
_{q}\left( Y\right) $ then $u\times v\in \mathcal{R}_{p+q}\left( Z\right) .$
Moreover, the cross product satisfies the product rule with respect to the
boundary operator $\partial $:%
\begin{equation}
\partial \left( u\times v\right) =\left( \partial u\right) \times v+\left(
-1\right) ^{p}u\times \left( \partial v\right)  \label{dusqv}
\end{equation}%
(see \cite[Prop. 6.3]{Grigoryan-Lin-Muranov-Yau2014}).

\subsection{Product of path complexes and digraphs}

\label{SecPP}

\begin{definition}
Given two finite sets $X$ and $Y$ with path complexes $P\left( X\right) $
and $P\left( Y\right) $ over $X$ and $Y$, respectively, define a path
complex $P\left( Z\right) $ over the set $Z=X\times Y$ as follows: the
elements of $P\left( Z\right) $ are step-like paths on $Z$ whose projections
on $X$ and $Y$ belong to $P\left( X\right) $ and $P\left( Y\right) $,
respectively. The path complex $P\left( Z\right) $ is called the \emph{%
Cartesian product} of the path complexes $P\left( X\right) $ and $P\left(
Y\right) $ and is denoted by $P\left( X\right) \Box P\left( Y\right) .$
\end{definition}

In short:\textbf{\ }a path $z$ on $Z$ is allowed if it is step-like and if
its projections on $X$ and $Y$ are allowed. In particular, if $x$ and $y$
are elementary allowed paths on $X$ and $Y$, respectively, then all the
paths $z\in \Pi _{x,y}$ are allowed on $Z$.

\begin{definition}
Let $X$ and $Y$ be digraphs. The Cartesian product $Z=X\Box Y$ of the
digraphs $X$ and $Y$ is defined as a digraph with the vertices $\left(
x,y\right) $ where $x\in X$ and $y\in Y$, and arrows $\left( x,y\right)
\rightarrow \left( x^{\prime },y^{\prime }\right) $ where either $%
x\rightarrow x^{\prime }$ and $y=y^{\prime }$ or $x=x^{\prime }$ and $%
y\rightarrow y^{\prime }$.
\end{definition}

For example, if $a\rightarrow a^{\prime }$ is an arrow in $X$ and $%
b\rightarrow b^{\prime }$ is an arrow in $Y$ then they induce the following
arrows in $Z$:%
\begin{equation*}
\begin{array}{ccc}
\overset{\left( a,b^{\prime }\right) }{\bullet } & \longrightarrow & \overset%
{\left( a^{\prime },b^{\prime }\right) }{\bullet } \\ 
\uparrow &  & \uparrow \\ 
\overset{\left( a,b\right) }{\bullet } & \longrightarrow & \overset{\left(
a^{\prime },b\right) }{\bullet }%
\end{array}%
\ 
\end{equation*}
Let $P\left( X\right) $ and $P\left( Y\right) $ be the path complexes in $X$
and $Y$, respectively, coming from the digraph structures. It is easy to see
that 
\begin{equation*}
P\left( X\Box Y\right) =P\left( X\right) \Box P\left( Y\right) ,
\end{equation*}%
that is, the Cartesian product of the path complexes is compatible with the
Cartesian product of digraphs. The reader who is interested only in digraphs
can always think of $X$ and $Y$ as digraphs and of $Z$ as their Cartesian
product.

For a general path complex $P\left( V\right) $ over a set $V$ we use the
short notations 
\begin{equation*}
\mathcal{A}_{p}\left( P\left( V\right) \right) \equiv \mathcal{A}_{p}\left(
V\right) \ \ \text{and\ \ }\Omega _{p}\left( P\left( V\right) \right) \equiv
\Omega _{p}\left( V\right) .
\end{equation*}%
It follows from (\ref{uvz1}) 
\begin{equation*}
u\in \mathcal{A}_{p}\left( X\right) \ \text{and\ }v\in \mathcal{A}_{q}\left(
Y\right) \ \ \Rightarrow \ \ u\times v\in \mathcal{A}_{p+q}\left( Z\right) .
\end{equation*}%
Moreover, (\ref{dusqv}) implies that 
\begin{equation*}
u\in \Omega _{p}\left( X\right) \ \text{and\ }v\in \Omega _{q}\left(
Y\right) \ \ \Rightarrow \ \ u\times v\in \Omega _{p+q}\left( Z\right)
\end{equation*}%
(see \cite[Prop. 6.5]{Grigoryan-Lin-Muranov-Yau2014}, \cite[Prop. 4.6]{GMY}%
). Furthermore, the following K\"{u}nneth formula is true: for any $r\geq 0$%
, 
\begin{equation}
\Omega _{r}\left( Z\right) =\bigoplus_{\left\{ p,q\geq 0:p+q=r\right\}
}\Omega _{p}\left( X\right) \otimes \Omega _{q}\left( Y\right) ,
\label{Hrpq}
\end{equation}%
where $\otimes $ denotes the tensor product of linear spaces, and $u\otimes
v $ for $u\in \Omega _{p}\left( X\right) $ and $v\in \Omega _{q}\left(
Y\right) $ is identified with the element $u\times v$ of $\Omega _{r}\left(
Z\right) $ (see \cite[Thm. 6.6]{Grigoryan-Lin-Muranov-Yau2014} and \cite[Thm
6.6]{Grigoryan-Lin-Muranov-Yau2020}).

\subsection{Operators $\partial ^{\ast }$ and $\Delta $ on products}

For the standard inner product $\iota $ defined by (\ref{ii}) on each of the
space $\mathcal{R}\left( X\right) $, $\mathcal{R}\left( Y\right) $ and $%
\mathcal{R}\left( Z\right) $ the following identity is known: if $\ u\in 
\mathcal{A}_{p}\left( X\right) $, $v\in \mathcal{A}_{q}\left( Y\right) $, $%
\varphi \in \mathcal{A}_{p^{\prime }}\left( X\right) $ and $\psi \in 
\mathcal{A}_{q^{\prime }}\left( Y\right) $, then%
\begin{equation*}
\langle u\times v,\varphi \times \psi \rangle _{\iota }=\tbinom{p+q}{p}%
\langle u,\varphi \rangle _{\iota }\langle v,\psi \rangle _{\iota }
\end{equation*}%
(see \cite[Lemma 4.13]{GMY}). This identity includes also the case when two
paths in the inner product have different length - in this case their inner
product is zero by definition. Hence, we have%
\begin{equation*}
\frac{1}{\left( p+q\right) !}\langle u\times v,\varphi \times \psi \rangle
_{\iota }=\frac{1}{p!}\langle u,\varphi \rangle _{\iota }\frac{1}{q!}\langle
v,\psi \rangle _{\iota }.
\end{equation*}%
In the case $p^{\prime }=p$ and $q^{\prime }=q$ we pass to the normalized
inner product $\iota ^{\prime }$ given by (\ref{iip}) and obtain 
\begin{equation}
\langle u\times v,\varphi \times \psi \rangle _{\iota ^{\prime }}=\langle
u,\varphi \rangle _{i^{\prime }}\langle v,\psi \rangle _{\iota ^{\prime }}~.
\label{uxvn}
\end{equation}%
This identity is true also if $p^{\prime }\neq p$ or $q^{\prime }\neq q$ as
in these cases the both sides vanish.

In the rest of this section we use the normalized inner product 
\begin{equation*}
\left\langle ,\right\rangle =\left\langle ,\right\rangle _{\iota ^{\prime }}
\end{equation*}%
unless otherwise specified. In particular, we define the adjoint operator $%
\partial ^{\ast }$ and the Hodge Laplacian with respect to the normalized
inner product and refer to them as \emph{normalized}.

\begin{lemma}
Let $u\in \Omega _{p}\left( X\right) $ and $v\in \Omega _{q}\left( Y\right)
. $ Then for the normalized adjoint operator we have%
\begin{equation}
\partial ^{\ast }\left( u\times v\right) =\partial ^{\ast }u\times v+\left(
-1\right) ^{p}u\times \left( \partial ^{\ast }v\right) .  \label{dprod}
\end{equation}
\end{lemma}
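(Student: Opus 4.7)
The plan is to verify the identity by testing against an arbitrary element of the appropriate chain space, converting the adjoint relation for $\partial^{\ast}$ into a computation involving $\partial$, where we already have the Leibniz rule (\ref{dusqv}) and the multiplicativity (\ref{uxvn}) of the normalized inner product.

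More precisely, fix $u\in \Omega_p(X)$ and $v\in\Omega_q(Y)$, so that $u\times v\in\Omega_{p+q}(Z)$ and $\partial^{\ast}(u\times v)\in \Omega_{p+q+1}(Z)$. By the K\"unneth formula (\ref{Hrpq}), the space $\Omega_{p+q+1}(Z)$ is spanned by elements of the form $\varphi\times\psi$ with $\varphi\in\Omega_{p'}(X)$, $\psi\in\Omega_{q'}(Y)$ and $p'+q'=p+q+1$. Hence it suffices to show that
\begin{equation*}
\langle\partial^{\ast}(u\times v),\varphi\times\psi\rangle = \langle \partial^{\ast}u\times v +(-1)^{p}u\times\partial^{\ast}v,\,\varphi\times\psi\rangle
\end{equation*}
for all such test vectors.

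For the left-hand side, I would apply the definition of the adjoint followed by the Leibniz rule (\ref{dusqv}):
\begin{equation*}
\langle \partial^{\ast}(u\times v),\varphi\times\psi\rangle = \langle u\times v,\partial(\varphi\times\psi)\rangle = \langle u\times v,\partial\varphi\times\psi\rangle + (-1)^{p'}\langle u\times v,\varphi\times\partial\psi\rangle.
\end{equation*}
Now I would invoke the multiplicativity (\ref{uxvn}) of the normalized inner product: the first term is nonzero only when $p'=p+1$ and $q'=q$, in which case it equals $\langle u,\partial\varphi\rangle\langle v,\psi\rangle=\langle\partial^{\ast}u,\varphi\rangle\langle v,\psi\rangle$; the second is nonzero only when $p'=p$ and $q'=q+1$, where the sign $(-1)^{p'}$ coincides with $(-1)^{p}$ and the factor becomes $(-1)^p\langle u,\varphi\rangle\langle\partial^{\ast}v,\psi\rangle$. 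Applying (\ref{uxvn}) directly to the right-hand side produces exactly the same two contributions with matching degrees and signs, so the two sides agree.

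The proof is essentially a bookkeeping exercise once the three ingredients are in hand: the K\"unneth decomposition lets us restrict to product test vectors, the Leibniz rule for $\partial$ furnishes the two terms, and the multiplicative behavior of the normalized inner product converts pairings of cross products into products of pairings on $X$ and $Y$. The only place where care is needed is in tracking the sign $(-1)^{p'}$ and noting that in the surviving case $p'=p$, so that the sign correctly matches the $(-1)^{p}$ in the target formula; this is also the reason the \emph{normalized} inner product is essential, since with the standard inner product the binomial factor $\binom{p+q}{p}$ would spoil the clean factorization and the identity would fail.
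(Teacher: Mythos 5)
Your argument is correct and is essentially the paper's own proof: the paper likewise expands an arbitrary $w\in\Omega_{p+q+1}(Z)$ as a sum of cross products via the K\"unneth decomposition, applies the Leibniz rule (\ref{dusqv}) and the multiplicativity (\ref{uxvn}) of the normalized inner product, and replaces the sign $(-1)^{p_k}$ by $(-1)^{p}$ using the vanishing of $\langle u,\varphi_k\rangle$ when $p_k\neq p$. Your closing remark about why the normalized inner product is essential is a nice touch, consistent with the paper's setup.
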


\begin{proof}
By definition, we have, for any $w\in \Omega _{p+q+1}\left( Z\right) $ 
\begin{equation*}
\langle \partial ^{\ast }\left( u\times v\right) ,w\rangle =\langle u\times
v,\partial w\rangle .
\end{equation*}%
Any $w\in \Omega _{\ast }\left( Z\right) $ admits a representation%
\begin{equation*}
w=\sum_{k}\varphi _{k}\times \psi _{k}
\end{equation*}%
where the sum is finite and%
\begin{equation*}
\varphi _{k}\in \Omega _{p_{k}}\left( X\right) \ \ \text{and }\psi _{k}\in
\Omega _{q_{k}}\left( Y\right)
\end{equation*}%
with $p_{k}+q_{k}=p+q+1$ (see \cite[Thm. 6.12]{Grigoryan-Lin-Muranov-Yau2014}%
, \cite[Theorem 5.1]{GMY}).

Then we have%
\begin{eqnarray*}
\langle \partial ^{\ast }\left( u\times v\right) ,w\rangle &=&\langle
u\times v,\sum \partial \left( \varphi _{k}\times \psi _{k}\right) \rangle \\
&=&\langle u\times v,\sum \left( \partial \varphi _{k}\times \psi
_{k}+\left( -1\right) ^{p_{k}}\varphi _{k}\times \partial \psi _{k}\right)
\rangle \\
&=&\sum \langle u\times v,\partial \varphi _{k}\times \psi _{k}\rangle
+\left( -1\right) ^{p_{k}}\langle u\times v,\varphi _{k}\times \partial \psi
_{k}\rangle \\
&=&\sum \langle u,\partial \varphi _{k}\rangle \langle v,\psi _{k}\rangle
+\left( -1\right) ^{p_{k}}\langle u,\varphi _{k}\rangle \langle v,\partial
\psi _{k}\rangle \\
&=&\sum \langle \partial ^{\ast }u,\varphi _{k}\rangle \langle v,\psi
_{k}\rangle +\left( -1\right) ^{p_{k}}\langle u,\varphi _{k}\rangle \langle
\partial ^{\ast }v,\psi _{k}\rangle .
\end{eqnarray*}%
Note that if $p_{k}\neq p$ then 
\begin{equation*}
\langle u,\varphi _{k}\rangle =0.
\end{equation*}%
Hence, we can replace $p_{k}$ everywhere by $p$ and obtain%
\begin{eqnarray*}
\langle \partial ^{\ast }\left( u\times v\right) ,w\rangle &=&\sum \langle
\partial ^{\ast }u,\varphi _{k}\rangle \langle v,\psi _{k}\rangle +\left(
-1\right) ^{p}\langle u,\varphi _{k}\rangle \langle \partial ^{\ast }v,\psi
_{k}\rangle \\
&=&\sum \langle \partial ^{\ast }u\times v,\varphi _{k}\times \psi
_{k}\rangle +\left( -1\right) ^{p}\langle u\times \partial ^{\ast }v,\varphi
_{k}\times \psi _{k}\rangle \\
&=&\langle \partial ^{\ast }u\times v+\left( -1\right) ^{p}u\times \partial
^{\ast }v,\sum \varphi _{k}\times \psi _{k}\rangle \\
&=&\langle \partial ^{\ast }u\times v+\left( -1\right) ^{p}u\times \partial
^{\ast }v,w\rangle ,
\end{eqnarray*}%
whence%
\begin{equation*}
\partial ^{\ast }\left( u\times v\right) =\partial ^{\ast }u\times v+\left(
-1\right) ^{p}u\times \partial ^{\ast }v.
\end{equation*}
\end{proof}

\begin{lemma}
For the normalized Hodge Laplacian we have%
\begin{equation}
\Delta \left( u\times v\right) =\left( \Delta u\right) \times v+u\times
\Delta v.  \label{dusqv2}
\end{equation}
\end{lemma}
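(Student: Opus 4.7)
The plan is to reduce the statement to the two Leibniz rules already available: the one for $\partial$ stated in (\ref{dusqv}) and the one for $\partial^{\ast}$ just proved in (\ref{dprod}). By definition, $\Delta(u\times v)=\partial^{\ast}\partial(u\times v)+\partial\partial^{\ast}(u\times v)$, so it suffices to expand each of these two terms by applying each Leibniz rule twice and then see that the cross terms cancel.

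First I would compute $\partial^{\ast}\partial(u\times v)$. Apply (\ref{dusqv}) to get $\partial(u\times v)=(\partial u)\times v+(-1)^{p}u\times(\partial v)$, where $\partial u\in\Omega_{p-1}(X)$ and $\partial v\in\Omega_{q-1}(Y)$; then apply (\ref{dprod}) to each of the two resulting cross products, being careful with signs (the degree in the $X$-factor has dropped by one in the first summand, so the relevant sign is $(-1)^{p-1}$, while it remains $(-1)^{p}$ in the second). This yields
\[
\partial^{\ast}\partial(u\times v)=(\partial^{\ast}\partial u)\times v+u\times(\partial^{\ast}\partial v)+(-1)^{p-1}(\partial u)\times(\partial^{\ast}v)+(-1)^{p}(\partial^{\ast}u)\times(\partial v).
\]
Next, by the symmetric computation starting from (\ref{dprod}) and then (\ref{dusqv}), I would obtain
\[
\partial\partial^{\ast}(u\times v)=(\partial\partial^{\ast}u)\times v+u\times(\partial\partial^{\ast}v)+(-1)^{p-1}(\partial^{\ast}u)\times(\partial v)+(-1)^{p}(\partial u)\times(\partial^{\ast}v).
\]

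Summing the two expressions, the pure terms combine to give $(\Delta u)\times v+u\times(\Delta v)$, while the four mixed terms pair off and cancel since $(-1)^{p-1}+(-1)^{p}=0$. This yields (\ref{dusqv2}).

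The only real obstacle is clerical, namely the careful tracking of the signs $(-1)^{p-1}$ versus $(-1)^{p}$ arising from the fact that after applying $\partial$ or $\partial^{\ast}$ the $X$-degree of the first factor may change by one. Everything else is a mechanical application of the two Leibniz identities already at our disposal; no properties of the normalized inner product beyond what was used to establish (\ref{dprod}) are needed at this step.
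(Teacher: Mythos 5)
Your proposal is correct and follows essentially the same route as the paper: expand $\partial^{\ast}\partial(u\times v)$ and $\partial\partial^{\ast}(u\times v)$ by applying the two Leibniz rules (\ref{dusqv}) and (\ref{dprod}) in succession, track the degree shift in the $X$-factor, and observe that the mixed terms $(\partial u)\times(\partial^{\ast}v)$ and $(\partial^{\ast}u)\times(\partial v)$ cancel in the sum. Your sign bookkeeping matches the paper's (note $(-1)^{p-1}=(-1)^{p+1}$), so nothing further is needed.
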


\begin{proof}
Let $u\in \Omega _{p}\left( X\right) $ and $v\in \Omega _{q}\left( X\right) $%
. Then we have%
\begin{eqnarray*}
\partial \partial ^{\ast }\left( u\times v\right) &=&\partial \left(
\partial ^{\ast }u\times v+\left( -1\right) ^{p}u\times \partial ^{\ast
}v\right) \\
&=&\partial \left( \partial ^{\ast }u\times v\right) +\left( -1\right)
^{p}\partial \left( u\times \partial ^{\ast }v\right) \\
&=&\partial \partial ^{\ast }u\times v+\left( -1\right) ^{p+1}\partial
^{\ast }u\times \partial v \\
&&+\left( -1\right) ^{p}\left( \partial u\times \partial ^{\ast }v+\left(
-1\right) ^{p}u\times \partial \partial ^{\ast }v\right) \\
&=&\partial \partial ^{\ast }u\times v+\left( -1\right) ^{p+1}\partial
^{\ast }u\times \partial v+\left( -1\right) ^{p}\partial u\times \partial
^{\ast }v+u\times \partial \partial ^{\ast }v
\end{eqnarray*}%
and%
\begin{eqnarray*}
\partial ^{\ast }\partial \left( u\times v\right) &=&\partial ^{\ast }\left(
\partial u\times v+\left( -1\right) ^{p}u\times \partial v\right) \\
&=&\partial ^{\ast }\left( \partial u\times v\right) +\left( -1\right)
^{p}\partial ^{\ast }\left( u\times \partial v\right) \\
&=&\partial ^{\ast }\partial u\times v+\left( -1\right) ^{p-1}\partial
u\times \partial ^{\ast }v \\
&&+\left( -1\right) ^{p}\left( \partial ^{\ast }u\times \partial v+\left(
-1\right) ^{p}u\times \partial ^{\ast }\partial v\right) \\
&=&\partial ^{\ast }\partial u\times v+\left( -1\right) ^{p-1}\partial
u\times \partial ^{\ast }v+\left( -1\right) ^{p}\partial ^{\ast }u\times
\partial v+u\times \partial ^{\ast }\partial v.
\end{eqnarray*}%
Adding up the two identities and noticing that the terms $\partial ^{\ast
}u\times \partial v$ and $\partial u\times \partial ^{\ast }v$ cancel out,
we obtain%
\begin{equation*}
\Delta \left( u\times v\right) =\left( \Delta u\right) \times v+u\times
\left( \Delta v\right) .
\end{equation*}
\end{proof}

\subsection{Torsion of products}

Let $P\left( V\right) $ be a path complex on a set $V$ with the maximal
length $N$ . As before, let $\iota $ be the standard inner product structure
on $P\left( V\right) $ given by (\ref{ii}) and $\iota ^{\prime }$ be the
normalized inner product structure on $P\left( V\right) $ given by (\ref{iip}%
). Consider the corresponding standard and normalized torsions:%
\begin{equation*}
T\left( V\right) =T\left( \Omega \left( V\right) ,\iota \right) \ \ \ \text{%
and\ \ \ }T^{\prime }\left( V\right) =T\left( \Omega \left( V\right) ,\iota
^{\prime }\right) .
\end{equation*}%
In the same way we will use notation $\tau \left( V\right) $ and $\tau
^{\prime }\left( V\right) $ for R-torsions with respect to $\iota $ and $%
\iota ^{\prime }$, respectively. Since $T\left( V\right) =\tau \left(
V\right) $ and $T^{\prime }\left( V\right) =\tau ^{\prime }\left( V\right) $%
, the relation between $T\left( V\right) $ and $T^{\prime }\left( V\right) $
is given by (\ref{logp!}) and (\ref{rk}).

Although the main object of interest for us is the standard torsion $T\left(
V\right) $, in this section we make an essential use of $T^{\prime }\left(
V\right) $ as it behaves better with respect to the Cartesian product.

We need also the Euler characteristic of $\Omega \left( V\right) $:%
\begin{equation*}
\chi \left( V\right) =\chi \left( \Omega \left( V\right) \right)
=\sum_{p=0}^{N}\left( -1\right) ^{p}\dim \Omega _{p}\left( V\right)
=\sum_{p=0}^{N}\left( -1\right) ^{p}\dim H_{p}\left( V\right) .
\end{equation*}%
The next theorem is our main result about torsion on the product of path
complexes.

\begin{theorem}
\label{T:main_thm1} If $P\left( Z\right) =P\left( X\right) \Box P\left(
Z\right) $ then%
\begin{equation}
\log T^{\prime }(Z)=\chi (Y)\log T^{\prime }(X)+\chi (X)\log T^{\prime }(Y).
\label{logT2}
\end{equation}
\end{theorem}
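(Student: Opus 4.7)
My plan is to compute the analytic torsion $T^{\prime}(Z)$ directly and reduce everything to two combinatorial identities on Laplacian eigenvalues.

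\textbf{Step 1 (Working formula and two spectral lemmas).} From the proof of Theorem \ref{T:main_thm} I would extract the identity
\begin{equation*}
2\log T^{\prime}(V) \;=\; -\sum_{p=0}^{N}(-1)^{p}\,p\,\log D_{p}^{V},\qquad D_{p}^{V}:=\operatorname{det}\nolimits^{\prime}\Delta_{p}^{V},
\end{equation*}
valid for any finite chain complex with its normalized inner product, where $\operatorname{det}^{\prime}$ is the product of positive eigenvalues. I would also lift from that proof the key observation that the positive spectrum of $\Delta_{p}^{V}$ (with multiplicities) decomposes as a disjoint union $S_{p}^{V}=\mu_{p}^{V}\sqcup \mu_{p-1}^{V}$, where $\mu_{p}^{V}$ is the spectrum of $\partial\partial^{\ast}$ on $B_{p}^{V}$. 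Since $\mu_{-1}^{V}=\mu_{N}^{V}=\emptyset$, this yields at once two cancellation identities:
(a) $\sum_{p}(-1)^{p}\log D_{p}^{V}=0$ (telescoping);
(b) for each $\lambda>0$, the multiplicity $m_{p}^{V}(\lambda)$ of $\lambda$ in $S_{p}^{V}$ satisfies $\sum_{p}(-1)^{p}m_{p}^{V}(\lambda)=0$ (index shift).

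\textbf{Step 2 (Spectrum of $\Delta^{Z}$ via Künneth).} Using the Künneth isomorphism \eqref{Hrpq} together with the factorisation \eqref{uxvn} of the normalized inner product, the direct sum $\Omega_{r}(Z)=\bigoplus_{p+q=r}\Omega_{p}(X)\otimes\Omega_{q}(Y)$ is orthogonal, and each summand carries the tensor inner product. Combined with the product rule \eqref{dusqv2}, this identifies $\Delta_{r}^{Z}$ on $\Omega_{p}(X)\otimes\Omega_{q}(Y)$ with $\Delta_{p}^{X}\otimes I+I\otimes \Delta_{q}^{Y}$. Hence tensor products of orthonormal eigenbases of $\Delta_{p}^{X}$ and $\Delta_{q}^{Y}$ form an orthonormal eigenbasis with eigenvalues $\alpha+\beta$. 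Separating the contributions by whether each factor is harmonic gives
\begin{equation*}
\log D_{r}^{Z}\;=\;\sum_{p+q=r}\Bigl[\log P_{p,q}+h_{q}^{Y}\log D_{p}^{X}+h_{p}^{X}\log D_{q}^{Y}\Bigr],
\end{equation*}
where $h_{p}^{X}=\dim H_{p}(X)$, $h_{q}^{Y}=\dim H_{q}(Y)$, and $\log P_{p,q}=\sum_{\alpha\in S_{p}^{X},\beta\in S_{q}^{Y}}\log(\alpha+\beta)$.

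\textbf{Step 3 (Assembly).} Substituting this into $-2\log T^{\prime}(Z)=\sum_{r}(-1)^{r}r\log D_{r}^{Z}$ and writing $r=p+q$, I would split the double sum into three pieces. For the two pieces containing $h$-factors, expanding $p+q=p+q$ and regrouping gives, e.g.,
\begin{equation*}
\sum_{p,q}(-1)^{p+q}(p+q)h_{q}^{Y}\log D_{p}^{X}=\Bigl(\sum_{p}(-1)^{p}p\log D_{p}^{X}\Bigr)\chi(Y)+\Bigl(\sum_{p}(-1)^{p}\log D_{p}^{X}\Bigr)\Bigl(\sum_{q}(-1)^{q}q\,h_{q}^{Y}\Bigr),
\end{equation*}
in which the second summand vanishes by (a), leaving $-2\chi(Y)\log T^{\prime}(X)$; symmetrically for the other piece. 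For the remaining piece involving $\log P_{p,q}$, identity (b) yields for each fixed $q$
\begin{equation*}
\sum_{p}(-1)^{p}\log P_{p,q}\;=\;\sum_{\alpha,\beta}m_{q}^{Y}(\beta)\log(\alpha+\beta)\sum_{p}(-1)^{p}m_{p}^{X}(\alpha)\;=\;0,
\end{equation*}
and symmetrically with $p$ and $q$ swapped, so this entire piece vanishes. Collecting everything gives $-2\log T^{\prime}(Z)=-2\chi(Y)\log T^{\prime}(X)-2\chi(X)\log T^{\prime}(Y)$, which is \eqref{logT2}.

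\textbf{The main obstacle.} The conceptual heart is identity (b) --- the signed cancellation of positive Laplacian multiplicities across degrees --- which is what removes the non-factorising term $\log(\alpha+\beta)$ from the alternating sum. Once (a) and (b) are placed as lemmas, the rest is bookkeeping of the decomposition $p+q=p+q$ in the alternating sum. A secondary technical point is the verification that the Künneth direct sum $\bigoplus_{p+q=r}\Omega_{p}(X)\otimes\Omega_{q}(Y)$ is orthogonal inside $\Omega_{r}(Z)$, which follows cleanly from \eqref{uxvn} since the cross product of paths of distinct bidegrees lands on disjoint sets of step-like paths on $Z$.
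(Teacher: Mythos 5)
Your proposal is correct and follows essentially the same route as the paper: the Künneth decomposition plus the product rule for $\Delta$ identifies the spectrum of $\Delta_r^Z$ as sums $\lambda+\mu$ with multiplicities $n_p(\lambda,X)n_q(\mu,Y)$, and your identity (b) is exactly the paper's Lemma \ref{LemNp}, which is used in the same way to kill the non-factorising cross terms. The only difference is cosmetic: you work directly with the log-determinants $\log D_p=-\zeta_p'(0)$, while the paper carries the zeta functions $\zeta_{r,Z}(s)$ through the cancellation and differentiates at $s=0$ at the end.
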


Before the proof of Theorem \ref{T:main_thm1}, we need to do some
preparations. In the next lemmas we work with an arbitrary chain complex $%
\Omega $ with some inner product structure $\iota $. Let $\lambda $ be an
eigenvalue of the Hodge Laplacian $\Delta _{p}$ on some chain complex $%
\Omega .$ Consider the eigenspace of $\lambda $ and its subspaces: 
\begin{eqnarray*}
E_{p}(\lambda ) &=&\{\varphi \in \Omega _{p}:\Delta _{p}\varphi =\lambda
\varphi \}, \\
E_{p}^{\prime }(\lambda ) &=&\{\varphi \in E_{p}(\lambda ):\partial \varphi
=0\}, \\
E_{p}^{\prime \prime }(\lambda ) &=&\{\varphi \in E_{p}(\lambda ):\partial
^{\ast }\varphi =0\}.
\end{eqnarray*}%
In the case $\lambda =0$ these three spaces are identical by Lemma \ref%
{Lemharm}. In the case $\lambda \neq 0$ the situation is different.

\begin{lemma}
\label{LemE+}Assume that $\lambda \neq 0$. Then we have 
\begin{eqnarray}
E_{p}^{\prime }\left( \lambda \right) &=&\{\varphi \in \Omega _{p}:\partial
\partial ^{\ast }\varphi =\lambda \varphi \}  \label{E'} \\
E_{p}^{\prime \prime }(\lambda ) &=&\{\varphi \in \Omega _{p}:\partial
^{\ast }\partial \varphi =\lambda \varphi \}  \label{E''}
\end{eqnarray}%
and%
\begin{equation}
E_{p}(\lambda )=E_{p}^{\prime }(\lambda )\bigoplus E_{p}^{\prime \prime
}(\lambda ).  \label{E+}
\end{equation}
\end{lemma}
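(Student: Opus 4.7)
The plan is to establish the three assertions in order: first the alternate descriptions \eqref{E'} and \eqref{E''} of $E_p'(\lambda)$ and $E_p''(\lambda)$, and then the direct sum decomposition \eqref{E+}. Throughout I will exploit the two defining identities $\partial^2=0$ and $(\partial^\ast)^2=0$, together with the hypothesis $\lambda\ne 0$, which is what allows us to divide.

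For \eqref{E'}, one inclusion is immediate: if $\varphi\in E_p'(\lambda)$ then $\partial\varphi=0$, so $\Delta_p\varphi=\partial\partial^\ast\varphi=\lambda\varphi$. Conversely, suppose $\partial\partial^\ast\varphi=\lambda\varphi$. Applying $\partial$ and using $\partial^2=0$ gives $\lambda\,\partial\varphi=0$, whence $\partial\varphi=0$ since $\lambda\ne 0$; then $\Delta_p\varphi=\partial\partial^\ast\varphi=\lambda\varphi$, so $\varphi\in E_p'(\lambda)$. The argument for \eqref{E''} is completely symmetric, using $(\partial^\ast)^2=0$.

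For the decomposition \eqref{E+}, given $\varphi\in E_p(\lambda)$ I set
\begin{equation*}
\varphi' := \tfrac{1}{\lambda}\,\partial\partial^\ast\varphi,\qquad
\varphi'' := \tfrac{1}{\lambda}\,\partial^\ast\partial\varphi,
\end{equation*}
so that $\varphi'+\varphi''=\tfrac{1}{\lambda}\Delta_p\varphi=\varphi$. To verify $\varphi'\in E_p'(\lambda)$, apply $\partial\partial^\ast$ to the eigenvalue equation $\Delta_p\varphi=\lambda\varphi$: using $\partial\partial^\ast\partial^\ast\partial=0$ one finds
\begin{equation*}
\partial\partial^\ast(\partial\partial^\ast\varphi)=\lambda\,\partial\partial^\ast\varphi,
\end{equation*}
so $\partial\partial^\ast\varphi$ satisfies the characterization \eqref{E'}, and hence so does $\varphi'$. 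The verification that $\varphi''\in E_p''(\lambda)$ is the mirror image, applying $\partial^\ast\partial$ and using $\partial^\ast\partial\partial\partial^\ast=0$.

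Finally, the sum is direct because $E_p'(\lambda)\cap E_p''(\lambda)=\{0\}$: any $\varphi$ in the intersection satisfies $\partial\varphi=\partial^\ast\varphi=0$, hence $\Delta_p\varphi=0=\lambda\varphi$, forcing $\varphi=0$. (As a remark, the sum is in fact orthogonal, since $\varphi'\in\partial\Omega_{p+1}$ and $\varphi''\in\partial^\ast\Omega_{p-1}$, which are orthogonal by the Hodge decomposition \eqref{Hd}; this observation is not needed for \eqref{E+} but will likely be useful later.) There is no real obstacle here — the only point to watch is to keep track of which $\partial$'s and $\partial^\ast$'s produce zero, and to use $\lambda\ne 0$ precisely once, when inverting $\lambda$ to define $\varphi'$ and $\varphi''$.
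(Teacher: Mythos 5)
Your proof is correct and follows essentially the same route as the paper's: the same characterizations of $E_p'(\lambda)$ and $E_p''(\lambda)$ obtained by applying $\partial$ (resp.\ $\partial^\ast$) and using $\partial^2=0$, $(\partial^\ast)^2=0$, and the same decomposition $\varphi=\tfrac{1}{\lambda}\partial\partial^\ast\varphi+\tfrac{1}{\lambda}\partial^\ast\partial\varphi$ with the same verification that the two pieces land in $E_p'(\lambda)$ and $E_p''(\lambda)$. The only cosmetic difference is that you get directness from the trivial intersection $E_p'(\lambda)\cap E_p''(\lambda)=\{0\}$, while the paper proves the two summands orthogonal outright — a fact your parenthetical remark also recovers.
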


\begin{proof}
Let us first prove (\ref{E'}). If $\varphi \in E_{p}^{\prime }\left( \lambda
\right) $ then%
\begin{equation*}
\lambda \varphi =\Delta \varphi =\partial ^{\ast }\partial \varphi +\partial
\partial ^{\ast }\varphi =\partial \partial ^{\ast }\varphi .
\end{equation*}
Conversely, if $\partial \partial ^{\ast }\varphi =\lambda \varphi $ then 
\begin{equation*}
\partial \varphi =\frac{1}{\lambda }\partial \left( \partial \partial ^{\ast
}\varphi \right) =0
\end{equation*}
and, hence, $\Delta \varphi =\partial \partial ^{\ast }\varphi =\lambda
\varphi $ so that $\varphi \in E_{\lambda }^{\prime }.$ In the same way one
proves (\ref{E''}).

In order to verify (\ref{E+}), observe first that the space $E_{p}^{\prime
}\left( \lambda \right) $ and $E_{p}^{\prime \prime }\left( \lambda \right) $
are orthogonal because for any $\varphi \in E_{p}^{\prime }\left( \lambda
\right) $ and $\psi \in E_{p}^{\prime \prime }\left( \lambda \right) $ we
have%
\begin{equation*}
\left\langle \varphi ,\psi \right\rangle =\frac{1}{\lambda ^{2}}\left\langle
\partial \partial ^{\ast }\varphi ,\partial ^{\ast }\partial \psi
\right\rangle =\frac{1}{\lambda ^{2}}\left\langle \partial \partial \partial
^{\ast }\varphi ,\partial \psi \right\rangle =0.
\end{equation*}
For any $\varphi \in E_{p}\left( \lambda \right) $ we have%
\begin{equation*}
\left( \partial \partial ^{\ast }\right) ^{2}\varphi =\partial \partial
^{\ast }\left( \partial \partial ^{\ast }\varphi +\partial ^{\ast }\partial
\varphi \right) =\partial \partial ^{\ast }\Delta \varphi =\lambda \partial
\partial ^{\ast }\varphi ,
\end{equation*}%
which implies by (\ref{E'}) that $\partial \partial ^{\ast }\varphi \in
E_{p}^{\prime }.$ Similarly, we have $\partial ^{\ast }\partial \varphi \in
E_{p}^{\prime \prime }.$ Finally, for any $\varphi \in E_{p}\left( \lambda
\right) $ we have 
\begin{equation*}
\varphi =\frac{1}{\lambda }\Delta \varphi =\frac{1}{\lambda }\partial
\partial ^{\ast }\varphi +\frac{1}{\lambda }\partial ^{\ast }\partial
\varphi ,
\end{equation*}%
whence (\ref{E+}) follows.
\end{proof}

\begin{lemma}
\label{Lemiso}The operator $\lambda ^{-1/2}\partial $ is an isometry of $%
E_{p}^{\prime \prime }(\lambda )$ onto $E_{p-1}^{\prime }(\lambda )$ with
the inverse $\lambda ^{-1/2}\partial ^{\ast }$.
\end{lemma}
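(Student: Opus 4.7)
The plan is to verify four things, each a short computation that uses only $\partial^2=0$, the definition of the adjoint, and the characterizations of $E_p'(\lambda)$ and $E_p''(\lambda)$ provided by the preceding Lemma \ref{LemE+}: namely, that $\lambda^{-1/2}\partial$ sends $E_p''(\lambda)$ into $E_{p-1}'(\lambda)$; that $\lambda^{-1/2}\partial^{\ast}$ sends $E_{p-1}'(\lambda)$ into $E_p''(\lambda)$; that these two maps are mutual inverses; and that $\lambda^{-1/2}\partial$ preserves norms. All four follow mechanically from the hypotheses, and since $\lambda>0$ the square roots and reciprocals make sense.

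For the first mapping, given $\varphi \in E_p''(\lambda)$, I will observe that $\partial(\lambda^{-1/2}\partial\varphi)=0$ by $\partial^2=0$, and that $\partial\partial^{\ast}(\lambda^{-1/2}\partial\varphi)=\lambda^{-1/2}\partial(\partial^{\ast}\partial\varphi)=\lambda\cdot\lambda^{-1/2}\partial\varphi$, using the characterization $\partial^{\ast}\partial\varphi=\lambda\varphi$ from equation \eqref{E''} of Lemma \ref{LemE+}. By equation \eqref{E'}, the image lies in $E_{p-1}'(\lambda)$. The entirely parallel argument, with the roles of $\partial$ and $\partial^{\ast}$ interchanged, shows that $\lambda^{-1/2}\partial^{\ast}$ carries $E_{p-1}'(\lambda)$ into $E_p''(\lambda)$.

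For the inversion identities, I will compose: for $\varphi\in E_p''(\lambda)$,
\begin{equation*}
\lambda^{-1/2}\partial^{\ast}\bigl(\lambda^{-1/2}\partial\varphi\bigr)=\lambda^{-1}\partial^{\ast}\partial\varphi=\varphi,
\end{equation*}
and symmetrically $\lambda^{-1/2}\partial(\lambda^{-1/2}\partial^{\ast}\psi)=\lambda^{-1}\partial\partial^{\ast}\psi=\psi$ for $\psi\in E_{p-1}'(\lambda)$. Finally, the isometry property comes from
\begin{equation*}
\bigl\|\lambda^{-1/2}\partial\varphi\bigr\|^{2}=\lambda^{-1}\langle\partial\varphi,\partial\varphi\rangle=\lambda^{-1}\langle\partial^{\ast}\partial\varphi,\varphi\rangle=\langle\varphi,\varphi\rangle=\|\varphi\|^{2}.
\end{equation*}
There is no real obstacle here: the entire statement is a formal consequence of $\partial^{2}=0$, the adjunction $\langle\partial\cdot,\cdot\rangle=\langle\cdot,\partial^{\ast}\cdot\rangle$, and the explicit descriptions of $E'$ and $E''$ as eigenspaces of $\partial\partial^{\ast}$ and $\partial^{\ast}\partial$ already obtained in Lemma \ref{LemE+}. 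The only care needed is bookkeeping with the normalizing factor $\lambda^{-1/2}$, which is unproblematic because $\lambda>0$ throughout.
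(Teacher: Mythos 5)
Your proposal is correct and follows essentially the same route as the paper: show $\lambda^{-1/2}\partial$ maps $E_p''(\lambda)$ into $E_{p-1}'(\lambda)$ via the characterizations of Lemma \ref{LemE+}, verify the norm identity through $\langle\partial\varphi,\partial\varphi\rangle=\langle\partial^{\ast}\partial\varphi,\varphi\rangle=\lambda\langle\varphi,\varphi\rangle$, and check that $\lambda^{-1/2}\partial^{\ast}$ is the inverse. The only cosmetic difference is that you verify both composition identities explicitly where the paper checks one together with surjectivity; both are complete.
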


\begin{proof}
Let $\varphi \in E_{p}^{\prime \prime }(\lambda )$ so that $\partial ^{\ast
}\varphi =0$ and $\partial ^{\ast }\partial \varphi =\lambda \varphi $. For $%
\psi =\partial \varphi $ we have%
\begin{equation*}
\partial \partial ^{\ast }\psi =\partial \partial ^{\ast }\partial \varphi
=\lambda \partial \varphi =\lambda \psi
\end{equation*}
whence $\psi \in E_{p-1}^{\prime }\left( \lambda \right) .$ Hence, $\partial 
$ maps $E_{p}^{\prime \prime }\left( \lambda \right) $ into $E_{p-1}^{\prime
}\left( \lambda \right) .$ Let us verify that $\lambda ^{-1/2}\partial $ is
an isometry. For $\varphi \in E_{p}^{\prime \prime }\left( \lambda \right) $
and $\psi =\lambda ^{-1/2}\partial \varphi $ we have 
\begin{equation*}
\left\langle \psi ,\psi \right\rangle =\frac{1}{\lambda }\left\langle
\partial \varphi ,\partial \varphi \right\rangle =\frac{1}{\lambda }%
\left\langle \partial ^{\ast }\partial \varphi ,\varphi \right\rangle
=\left\langle \varphi ,\varphi \right\rangle .
\end{equation*}%
It remains to show that the mapping $\lambda ^{-1/2}\partial $ is onto and
has the inverse $\lambda ^{-1/2}\partial ^{\ast }.$ For any $\psi \in
E_{p-1}^{\prime }\left( \lambda \right) $ we have $\partial ^{\ast }\left(
\partial ^{\ast }\psi \right) =0$ and 
\begin{equation*}
\Delta _{p}(\partial ^{\ast }\psi )=\left( \partial ^{\ast }\partial
+\partial \partial ^{\ast }\right) \partial ^{\ast }\psi =\partial ^{\ast
}\partial \partial ^{\ast }\psi =\partial ^{\ast }(\lambda \psi )=\lambda
\partial ^{\ast }\psi ,
\end{equation*}%
which implies $\partial ^{\ast }\psi \in E_{p}^{\prime \prime }(\lambda ).$
Since by (\ref{E'}) $\partial \partial ^{\ast }\psi =\lambda \psi ,$ we
obtain 
\begin{equation*}
\lambda ^{-1/2}\partial (\lambda ^{-1/2}\partial ^{\ast }\psi )=\frac{1}{%
\lambda }\partial \partial ^{\ast }\psi =\psi .
\end{equation*}
and we conclude that $\lambda ^{-1/2}\partial $ and $\lambda ^{-1/2}\partial
^{\ast }$ are mutually inverse.
\end{proof}

Let $n_{p}(\lambda )$, $n_{p}^{\prime }(\lambda )$, $n_{p}^{\prime \prime
}(\lambda )$ be the dimensions of spaces $E_{p}(\lambda )$, $E_{p}^{\prime
}(\lambda )$, $E_{p}^{\prime \prime }(\lambda )$, respectively It follows
from Lemmas \ref{LemE+} and \ref{Lemiso} that%
\begin{eqnarray*}
n_{p-1}^{\prime }(\lambda ) &=&n_{p}^{\prime \prime }(\lambda ), \\
n_{p}(\lambda ) &=&n_{p}^{\prime }(\lambda )+n_{p}^{\prime \prime }(\lambda
).
\end{eqnarray*}%
As it follows from the definition of the Euler characteristic $\chi \left(
\Omega \right) $ and $H_{p}\cong \mathcal{H}_{p}$, we have%
\begin{equation*}
\chi \left( \Omega \right) =\sum_{p=0}^{N}\left( -1\right) ^{p}n_{p}\left(
0\right) .
\end{equation*}

\begin{lemma}
\label{LemNp}If $\lambda >0$ then 
\begin{equation*}
\sum\limits_{p=0}^{N}(-1)^{p}n_{p}(\lambda )=0.
\end{equation*}
\end{lemma}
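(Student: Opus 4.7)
The plan is to combine the two structural facts established in Lemmas \ref{LemE+} and \ref{Lemiso} with careful bookkeeping at the boundary of the chain complex. From Lemma \ref{LemE+} we have, for $\lambda>0$, the orthogonal decomposition $E_p(\lambda)=E_p'(\lambda)\oplus E_p''(\lambda)$, which gives
\begin{equation*}
n_p(\lambda)=n_p'(\lambda)+n_p''(\lambda).
\end{equation*}
From Lemma \ref{Lemiso}, the isometry $\lambda^{-1/2}\partial\colon E_p''(\lambda)\to E_{p-1}'(\lambda)$ gives the index shift
\begin{equation*}
n_{p-1}'(\lambda)=n_p''(\lambda).
\end{equation*}
So I would substitute these into the alternating sum and telescope.

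Concretely, I would write
\begin{equation*}
\sum_{p=0}^{N}(-1)^p n_p(\lambda)=\sum_{p=0}^{N}(-1)^p n_p'(\lambda)+\sum_{p=0}^{N}(-1)^p n_p''(\lambda),
\end{equation*}
then reindex the first sum via $q=p+1$ using $n_p'(\lambda)=n_{p+1}''(\lambda)$ to obtain
\begin{equation*}
\sum_{p=0}^{N}(-1)^p n_p'(\lambda)=\sum_{q=1}^{N+1}(-1)^{q-1}n_q''(\lambda)=-\sum_{q=1}^{N+1}(-1)^{q}n_q''(\lambda).
\end{equation*}
After this the two sums differ only at the boundary indices $q=0$ and $q=N+1$, so the remaining step is to verify those boundary terms vanish.

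The main (and really only) subtle point is to justify that $n_0''(\lambda)=0$ and $n_{N+1}''(\lambda)=0$ for $\lambda>0$, so that the reindexed sum truly matches $-\sum_{q=0}^{N}(-1)^q n_q''(\lambda)$ and cancellation is complete. For $n_{N+1}''(\lambda)$ this is immediate since $\Omega_{N+1}=\{0\}$. For $n_0''(\lambda)$, I would argue directly: if $\varphi\in\Omega_0$ satisfies $\partial^{\ast}\varphi=0$, then because $\partial\varphi=0$ automatically (as $\Omega_{-1}=\{0\}$), Lemma \ref{Lemharm} gives $\Delta_0\varphi=0$, so $\varphi\in E_0''(\lambda)$ forces $\lambda\varphi=0$, and hence $\varphi=0$ since $\lambda>0$. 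With those two boundary vanishings in hand, the two alternating sums cancel identically, giving $\sum_{p=0}^{N}(-1)^p n_p(\lambda)=0$. I do not anticipate any technical obstacle beyond keeping the index shift and the endpoint checks honest.
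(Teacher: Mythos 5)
Your proposal is correct and follows essentially the same route as the paper: split $n_p=n_p'+n_p''$ via Lemma \ref{LemE+}, shift indices with $n_{p-1}'=n_p''$ from Lemma \ref{Lemiso}, telescope, and kill the two boundary terms (the paper phrases them as $n_N'(\lambda)=0$ and $n_0''(\lambda)=0$, which are exactly your $n_{N+1}''(\lambda)=0$ and $n_0''(\lambda)=0$). The only cosmetic difference is that the paper verifies $n_0''(\lambda)=0$ from the identity $\partial^\ast\partial\varphi=\lambda\varphi$ rather than via Lemma \ref{Lemharm}; both are fine.
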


\begin{proof}
We have%
\begin{eqnarray*}
\sum_{p=0}^{N}(-1)^{p}n_{p}(\lambda ) &=&\sum_{p=0}^{N}(-1)^{p}n_{p}^{\prime
}(\lambda )+\sum_{p=0}^{N}(-1)^{p}n_{p}^{\prime \prime }(\lambda ) \\
&=&\sum_{p=0}^{N}(-1)^{p}n_{p}^{\prime }(\lambda
)+\sum_{p=1}^{N}(-1)^{p}n_{p-1}^{\prime }(\lambda )+n_{0}^{\prime \prime
}(\lambda ) \\
&=&(-1)^{N}n_{N}^{\prime }(\lambda )+n_{0}^{\prime \prime }(\lambda ) \\
&=&0.
\end{eqnarray*}%
Here $n_{N}^{\prime }\left( \lambda \right) =0$ because for every vector $%
\varphi \in E_{N}^{\prime }\left( \lambda \right) $ we have $\partial ^{\ast
}\varphi =0$ and, hence, 
\begin{equation*}
\varphi =\frac{1}{\lambda }\partial \partial ^{\ast }\varphi =0,
\end{equation*}%
and $n_{0}^{\prime \prime }\left( \lambda \right) =0$ because for any $%
\varphi \in E_{0}^{\prime \prime }\left( \lambda \right) $ we have $\partial
\varphi =0$ and, hence, 
\begin{equation*}
\varphi =\frac{1}{\lambda }\partial ^{\ast }\partial \varphi =0.
\end{equation*}
\end{proof}

Now we can prove Theorem \ref{T:main_thm1}. The idea of proof is borrowed
from \cite[Thm. 2.5]{Ray-Singer1971}.

\begin{proof}[Proof of Theorem \protect\ref{T:main_thm1}]
The zeta function $\zeta _{p,X}\left( s\right) $ of $\Delta _{p}$ on $X$ can
be represented in the form 
\begin{equation*}
\zeta _{p,X}\left( s\right) =\sum_{\lambda >0}\lambda ^{-s}n_{p}\left(
\lambda ,X\right) ,
\end{equation*}%
where the sum is taken over all distinct positive eigenvalues $\lambda $ of $%
\Delta _{p}$ and $n_{p}\left( \lambda ,X\right) $ is the multiplicity of $%
\lambda $. Similar formulas hold for $\zeta _{q,Y}\left( s\right) $ and $%
\zeta _{r,Z}\left( s\right) .$

Let $u\in \Omega _{p}\left( X\right) $ be an eigenvector of $\Delta _{p}$
with eigenvalue $\lambda $ and $v\in \Omega _{q}\left( Y\right) $ be an
eigenvector of $\Delta _{q}$ with eigenvalue $\mu $. It follows from (\ref%
{dusqv2}) that $u\times v\in \Omega _{p+q}\left( Z\right) $ is an
eigenvector of $\Delta _{p+q}$ with eigenvalue $\lambda +\mu $. If $\left\{
u_{i}\right\} $ is an orthonormal basis in $\Omega _{p}\left( X\right) $
consisting of the eigenvectors of $\Delta _{p}$ and $\left\{ v_{j}\right\} $
is an orthonormal basis in $\Omega _{q}\left( Y\right) $ consisting of the
eigenvectors of $\Delta _{q}$ then the sequence $\left\{ u_{i}\times
v_{j}\right\} $ is orthonormal by (\ref{uxvn}) and, hence, forms a basis in $%
\Omega _{p}\left( X\right) \otimes \Omega _{q}\left( Y\right) .$

Let us fix $r\geq 0$ and recall that by the K\"{u}nneth formula (\ref{Hrpq}) 
$\Omega _{r}\left( Z\right) $ is a direct sum of the spaces $\Omega
_{p}\left( X\right) \otimes \Omega _{q}\left( Y\right) $ over all pairs $%
p,q\geq 0$ with $p+q=r.$ Hence, collecting all the bases in $\Omega
_{p}\left( X\right) \otimes \Omega _{q}\left( Y\right) $ of the form $%
\left\{ u_{i}\times v_{j}\right\} $ we obtain an orthonormal basis in $%
\Omega _{r}\left( Z\right) $. This basis consists of the eigenvectors of $%
\Delta _{r}$ in $\Omega _{r}\left( Z\right) .$ Hence, all the eigenvalues of 
$\Delta _{r}$ in $\Omega _{r}\left( Z\right) $ have the form $\lambda +\mu $
where $\lambda $ is an eigenvalue of $\Delta _{p}$ in $\Omega _{p}\left(
X\right) $, $\mu $ is an eigenvalue of $\Delta _{q}$ in $\Omega _{q}\left(
Y\right) $, and the multiplicity of $\lambda +\mu $ is $n_{p}\left( \lambda
,X\right) n_{q}\left( \mu ,Y\right) .$

Hence, we obtain 
\begin{equation}
\zeta _{r,Z}(s)=\sum_{\lambda +\mu >0}\sum_{p+q=r}(\lambda +\mu
)^{-s}n_{p}(\lambda ,X)n_{q}(\mu ,Y).  \label{zetan}
\end{equation}%
It follows that%
\begin{eqnarray}
&&\sum_{r\geq 0}(-1)^{r}r\zeta _{r,Z}(s)  \notag \\
&=&\sum_{\lambda +\mu >0}(\lambda +\mu )^{-s}\sum_{p\geq 0}\sum_{q\geq
0}(-1)^{p+q}(p+q)n_{p}(\lambda ,X)n_{q}(\mu ,Y)  \notag \\
&=&\sum_{\lambda +\mu >0}(\lambda +\mu )^{-s}\left( \sum_{p\geq
0}(-1)^{p}p\,n_{p}(\lambda ,X)\right) \left( \sum_{q\geq 0}(-1)^{q}n_{q}(\mu
,Y)\right)  \label{2a} \\
&+&\sum_{\lambda +\mu >0}(\lambda +\mu )^{-s}\left( \sum_{p\geq
0}(-1)^{p}n_{p}(\lambda ,X)\right) \left( \sum_{q\geq 0}(-1)^{q}q\,n_{q}(\mu
,Y)\right) .  \label{3a}
\end{eqnarray}%
By Lemma \ref{LemNp}, if $\lambda >0$ then 
\begin{equation*}
\sum_{p\geq 0}(-1)^{p}n_{p}(\lambda ,X)=0
\end{equation*}%
and if $\mu >0$ then%
\begin{equation*}
\sum_{q\geq 0}(-1)^{q}n_{q}(\mu ,Y)=0.
\end{equation*}%
Hence, in (\ref{2a})-(\ref{3a}) all the terms with $\lambda >0$ and $\mu >0$
vanish, and we obtain%
\begin{eqnarray*}
\sum_{r\geq 0}(-1)^{r}r\zeta _{r,Z}(s) &=&\sum_{\lambda >0,\mu =0}\lambda
^{-s}\left( \sum_{p\geq 0}(-1)^{p}p\,n_{p}(\lambda ,X)\right) \left(
\sum_{q\geq 0}(-1)^{q}n_{q}(0,Y)\right) \\
&+&\sum_{\mu >0,\lambda =0}\mu ^{-s}\left( \sum_{p\geq
0}(-1)^{p}n_{p}(0,X)\right) \left( \sum_{q\geq 0}(-1)^{q}q\,n_{q}(\mu
,Y)\right) \\
&=&\chi (Y)\sum_{p\geq 0}(-1)^{p}p\zeta _{p,X}(s)+\chi (X)\sum_{q\geq
0}(-1)^{q}q\zeta _{q,Y}(s).
\end{eqnarray*}%
Taking derivative of the both sides at $s=0$ and using the definition of
analytic torsion, we obtain 
\begin{equation}
\log T^{\prime }(Z)=\chi (Y)\log T^{\prime }(X)+\chi (X)\log T^{\prime }(Y).
\label{T2}
\end{equation}
\end{proof}

The K\"{u}nneth formula (\ref{Hrpq}) implies that 
\begin{equation*}
\chi \left( Z\right) =\chi \left( X\right) \chi \left( Y\right) ,
\end{equation*}%
which will be used in the next statement.

For any $n\geq 2$ define on the set $X^{\Box n}=\underset{n\ \mathrm{times}}{%
\underbrace{X\times ...\times X}}$ the following path complex $\,$%
\begin{equation*}
P(X^{\Box n})=\underset{n\ \mathrm{times}}{\underbrace{P\left( X\right) \Box
...\Box P\left( X\right) }}=P\left( X\right) ^{\Box n}.
\end{equation*}

\begin{corollary}
\label{Cor1n}We have%
\begin{equation}
\log T^{\prime }(X^{\Box n})=n\chi \left( X\right) ^{n-1}\log T^{\prime
}\left( X\right) .  \label{T2Xn}
\end{equation}
\end{corollary}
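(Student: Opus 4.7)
The plan is to prove this by induction on $n$, using Theorem \ref{T:main_thm1} as the base case and the K\"unneth multiplicativity of the Euler characteristic $\chi(X \Box Y) = \chi(X)\chi(Y)$ (which follows from (\ref{Hrpq})) at each step.

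For the base case $n=2$, we apply Theorem \ref{T:main_thm1} directly to $X^{\Box 2} = X \Box X$, which yields
\begin{equation*}
\log T'(X^{\Box 2}) = \chi(X) \log T'(X) + \chi(X) \log T'(X) = 2\chi(X) \log T'(X),
\end{equation*}
matching (\ref{T2Xn}). For the inductive step, suppose the formula holds for some $n \geq 2$. Writing $X^{\Box(n+1)} = X^{\Box n} \Box X$ (which uses the evident associativity of the Cartesian product of path complexes, since step-like paths on a triple product project consistently in either bracketing), Theorem \ref{T:main_thm1} gives
\begin{equation*}
\log T'(X^{\Box(n+1)}) = \chi(X) \log T'(X^{\Box n}) + \chi(X^{\Box n}) \log T'(X).
\end{equation*}
Using the inductive hypothesis on the first term and the multiplicativity $\chi(X^{\Box n}) = \chi(X)^n$ on the second, this becomes
\begin{equation*}
\chi(X) \cdot n\chi(X)^{n-1} \log T'(X) + \chi(X)^n \log T'(X) = (n+1)\chi(X)^n \log T'(X),
\end{equation*}
completing the induction.

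The only genuine subtlety lies in justifying that iterated Cartesian products of path complexes are associative in the strong sense needed here, namely that $P(X)^{\Box n}$ (defined as the $n$-fold iteration under some bracketing) is independent of bracketing and equals $P(X^{\Box n})$ as defined in the statement. This is essentially bookkeeping about step-like paths with projections onto $n$ factors, and follows directly from the definition in Section \ref{SecPP}. Apart from that, the proof is a mechanical iteration of Theorem \ref{T:main_thm1}, and no further analytic input is required.
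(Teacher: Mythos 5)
Your proof is correct and follows essentially the same route as the paper: an induction on $n$ driven by Theorem \ref{T:main_thm1} together with the multiplicativity $\chi(X^{\Box n})=\chi(X)^{n}$ coming from the K\"unneth formula (the paper phrases it as the recurrence $x_{n+1}=ax_{n}+a^{n}x_{1}$ with $a=\chi(X)$, starting from the trivial case $n=1$). Your added remark on the associativity of the iterated Cartesian product is a reasonable point of care that the paper leaves implicit.
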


\begin{proof}
Denote $\log T^{\prime }\left( X^{\Box n}\right) =x_{n}$ and $\chi \left(
X\right) =a.$ Then $\chi \left( X^{n}\right) =a^{n}$, and we have by (\ref%
{T2}) 
\begin{equation*}
x_{n+1}=ax_{n}+a^{n}x_{1}.
\end{equation*}%
For $n=1$ (\ref{T2Xn}) is trivial. Assuming the induction hypothesis $%
x_{n}=na^{n-1}x_{1},$ we obtain 
\begin{equation*}
x_{n+1}=na^{n}x_{1}+a^{n}x_{1}=\left( n+1\right) a^{n}x_{1},
\end{equation*}%
which finishes the proof by induction.
\end{proof}

\begin{example}
\label{Excyclen}Let $G$ be a cyclic digraph with $m$ vertices from Example %
\ref{Excycle}. For $n\geq 2$ the product $G^{\Box n}$ can be regarded as an
analogue of a torus. Since $\chi \left( G\right) =0$, we obtain from (\ref%
{T2Xn}) that, for any $n\geq 2$,%
\begin{equation*}
T^{\prime }(G^{\Box n})=1.
\end{equation*}%
Recall for comparison that $T^{\prime }\left( G\right) =T\left( G\right) =m.$

Before we can compute $T(G^{\Box n})$, let us verify that%
\begin{equation}
\dim \Omega _{p}(G^{\Box n})=\tbinom{n}{p}m^{n}.  \label{Omcyclic}
\end{equation}%
Indeed, for $n=1$ this is true because 
\begin{equation*}
\dim \Omega _{0}\left( G\right) =\dim \Omega _{1}\left( G\right) =m.
\end{equation*}%
Assuming that (\ref{Omcyclic}) is true for some $n$, we obtain by the K\"{u}%
nneth formula (\ref{Hrpq})%
\begin{eqnarray*}
\dim \Omega _{r}(G^{\Box \left( n+1\right) }) &=&\sum_{p+q=r}\dim \Omega
_{p}(G^{\Box n})\dim \Omega _{q}\left( G\right) \\
&=&m\dim \Omega _{r}(G^{\Box n})+m\dim \Omega _{r-1}(G^{\Box n}) \\
&=&m\tbinom{n}{r}m^{n}+m\tbinom{n}{r-1}m^{n} \\
&=&m^{n+1}\tbinom{n+1}{r}.
\end{eqnarray*}%
In the same way, using that 
\begin{equation*}
\dim H_{0}\left( G\right) =\dim H_{1}\left( G\right) =1,
\end{equation*}%
we obtain that 
\begin{equation*}
\dim H_{p}(G^{\Box n})=\tbinom{n}{p}.
\end{equation*}%
Hence, by (\ref{logp!}) we obtain%
\begin{eqnarray*}
T(G^{\Box n}) &=&T^{\prime }(G^{\Box n})\prod_{p=0}^{n}\left( p!\right) ^{%
\frac{1}{2}\left( -1\right) ^{p}\left( \dim \Omega _{p}-\dim H_{p}\right) }
\\
&=&\prod_{p=2}^{n}\left( p!\right) ^{\frac{1}{2}\left( -1\right) ^{p}\binom{n%
}{p}\left( m^{n}-1\right) }.
\end{eqnarray*}%
In particular, we have $T(G^{\Box 2})=2^{\frac{1}{2}\left( m^{2}-1\right) }.$
\end{example}

\begin{example}
\label{ExIn}For the \emph{interval} $I=\left. ^{0}\bullet \rightarrow
\bullet ^{1}\right. $ we have by Example \ref{ExI} $T^{\prime }\left(
I\right) =\sqrt{2}$ and $\chi \left( I\right) =1.$ Consider the $n$%
-dimensional digraph \emph{cube} $I^{\Box n}.$ In the case $n=2$ it
coincides with the square from Example \ref{Ex2}, in the case $n=3$ this
digraph is shown on Fig. \ref{pic9}. \FRAME{ftbhFU}{4.258cm}{3.83cm}{0pt}{%
\Qcb{The cube $I^{\Box 3}$}}{\Qlb{pic9}}{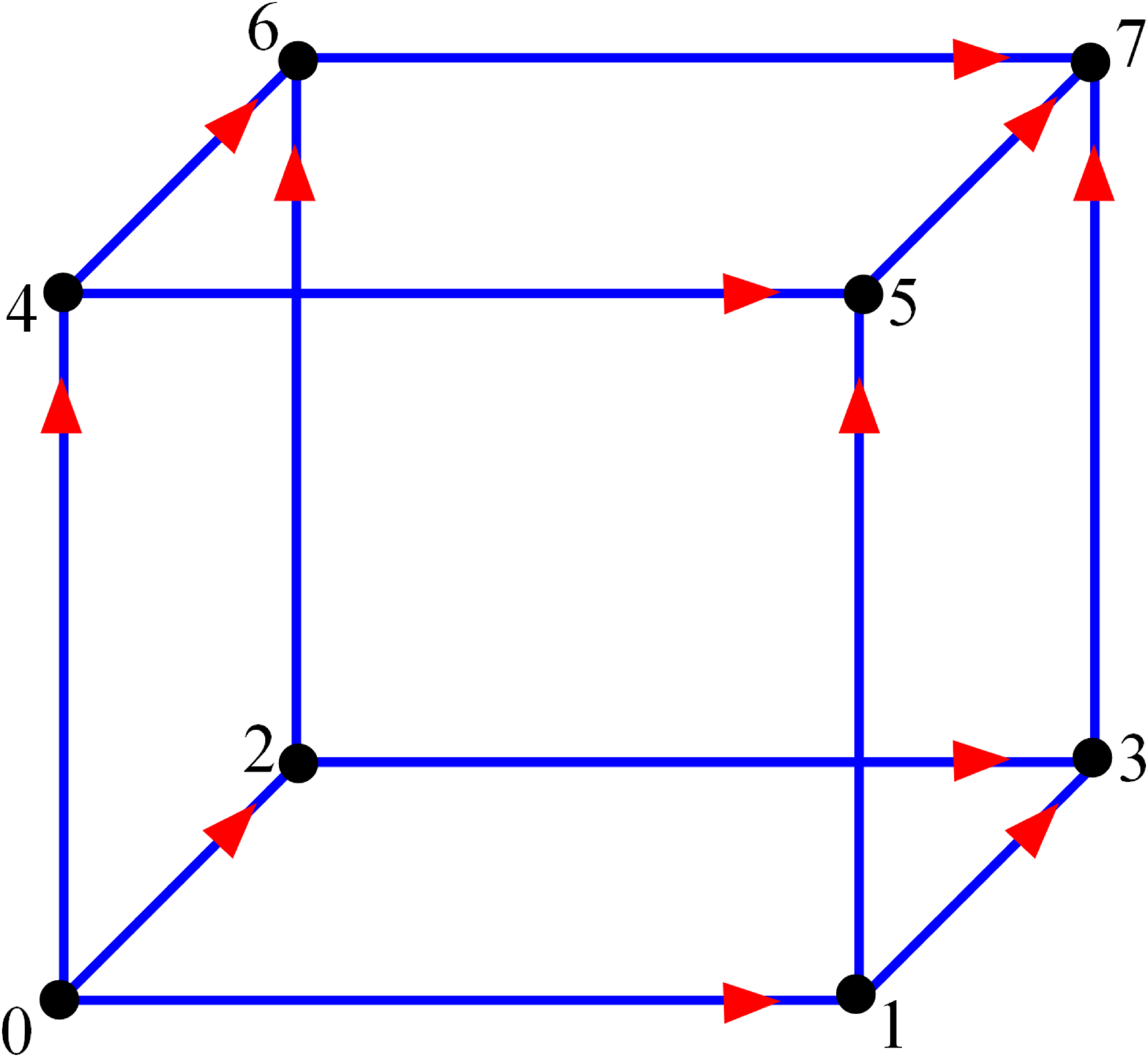}{\special{language
"Scientific Word";type "GRAPHIC";maintain-aspect-ratio TRUE;display
"USEDEF";valid_file "F";width 4.258cm;height 3.83cm;depth 0pt;original-width
16.5455cm;original-height 14.8716cm;cropleft "0";croptop "1";cropright
"1";cropbottom "0";filename 'pic9.eps';file-properties "XNPEU";}}

By (\ref{T2Xn}) we obtain 
\begin{equation*}
\log T^{\prime }(I^{\Box n})=n\log \sqrt{2}
\end{equation*}%
whence%
\begin{equation*}
T^{\prime }(I^{\Box n})=2^{n/2}.
\end{equation*}%
Let us compute the torsion $T(I^{\Box n})$ of the cube with respect to the
standard inner product $\iota $. For that let us first verify that%
\begin{equation}
\dim \Omega _{p}(I^{\Box n})=2^{n-p}\tbinom{n}{p},  \label{Omcube}
\end{equation}%
We have 
\begin{equation*}
\dim \Omega _{0}\left( I\right) =2,\ \ \dim \Omega _{1}\left( I\right) =1\ \ 
\text{and\ \ }\dim \Omega _{p}\left( I\right) =0\ \ \text{for }p\geq 2
\end{equation*}%
so that (\ref{Omcube}) holds for $n=1.$ For the inductive step from $n$ to $%
n+1$, observe that $I^{\Box \left( n+1\right) }=I^{\Box n}\Box I$. By the K%
\"{u}nneth formula (\ref{Hrpq}) we have%
\begin{eqnarray*}
\dim \Omega _{r}(I^{\Box \left( n+1\right) }) &=&\sum_{p+q=r}\dim \Omega
_{p}(I^{\Box n})\dim \Omega _{q}\left( I\right) \\
&=&2\dim \Omega _{r}(I^{\Box n})+\dim \Omega _{r-1}(I^{\Box n}) \\
&=&2^{n+1-r}\tbinom{n}{r}+2^{n-r+1}\tbinom{n}{r-1} \\
&=&2^{n+1-r}\tbinom{n+1}{r},
\end{eqnarray*}%
which finishes the proof of (\ref{Omcube}). In the same way one obtains that 
$\dim H_{0}(I^{\Box n})=1$ and $\dim H_{p}(I^{\Box n})=0$ for all $p\geq 1.$
Hence, by (\ref{logp!}) we obtain%
\begin{eqnarray*}
T(I^{\Box n}) &=&T^{\prime }(I^{\Box n})\prod_{p=0}^{n}\left( p!\right) ^{%
\frac{1}{2}\left( -1\right) ^{p}\left( \dim \Omega _{p}-\dim H_{p}\right) }
\\
&=&2^{n/2}\prod_{p=2}^{n}\left( p!\right) ^{\frac{1}{2}\left( -1\right)
^{p}2^{n-p}\binom{n}{p}}.
\end{eqnarray*}%
For example, we have 
\begin{equation*}
T(I^{\Box 2})=2\sqrt{2},\ \ \ \ T(I^{\Box 3})=\frac{16}{3}\sqrt{3},\ \ \ \
T(I^{\Box 4})=\frac{2048}{81}\sqrt{6},
\end{equation*}%
etc.
\end{example}

\begin{corollary}
\label{Cor1}If $P\left( Z\right) =P\left( X\right) \Box P\left( Z\right) $
then%
\begin{eqnarray}
&&\log T(Z)\underset{}{=}\chi (Y)\log T(X)+\chi (X)\log T(Y)  \notag \\
&&+\frac{1}{2}\sum_{p,q\geq 1}\left( -1\right) ^{p+q}\log \tbinom{p+q}{p}%
\left( \dim \Omega _{p}(X)\dim \Omega _{q}(Y)-\dim H_{p}(X)\dim
H_{q}(Y)\right) .  \label{T1}
\end{eqnarray}
\end{corollary}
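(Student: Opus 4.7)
The strategy is to convert Theorem \ref{T:main_thm1}, which is stated for the normalized torsion $T'$, into a statement about the standard torsion $T$ using the conversion formula (\ref{logp!}). For any path complex $P$, set
\[
S(P) := \frac{1}{2}\sum_{p} (-1)^{p} \bigl(\dim \Omega_p(P) - \dim H_p(P)\bigr) \log(p!),
\]
so that $\log T(P) = \log T'(P) + S(P)$ by (\ref{logp!}). Applying this to $X$, $Y$ and $Z$ and substituting into Theorem \ref{T:main_thm1}, the task reduces to verifying
\[
S(Z) - \chi(Y) S(X) - \chi(X) S(Y) = \tfrac{1}{2}\sum_{p,q \geq 1}(-1)^{p+q}\log\tbinom{p+q}{p}\bigl(\dim\Omega_p(X)\dim\Omega_q(Y) - \dim H_p(X)\dim H_q(Y)\bigr).
\]

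The key input is the K\"unneth formula (\ref{Hrpq}). At the chain level it gives $\dim\Omega_r(Z) = \sum_{p+q=r}\dim\Omega_p(X)\dim\Omega_q(Y)$, and since we work over the field $\mathbb{R}$, passing to homology yields the analogous identity $\dim H_r(Z) = \sum_{p+q=r}\dim H_p(X)\dim H_q(Y)$. In particular $\chi(Z) = \chi(X)\chi(Y)$, and both Euler characteristics are computable from either the chain or the homology dimensions.

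To finish, I would start from the right-hand side of the target identity, extend the sum to all $p,q\geq 0$ (harmless because $\binom{p+q}{p}=1$ when $p=0$ or $q=0$), and use $\log\binom{p+q}{p} = \log((p+q)!) - \log(p!) - \log(q!)$ to split it into three pieces. The $\log((p+q)!)$ piece reassembles via the two K\"unneth formulas to $2S(Z)$. The $\log(p!)$ piece factors out $\log(p!)$ and the inner $q$-sum collapses via $\sum_q(-1)^q\dim\Omega_q(Y) = \sum_q(-1)^q\dim H_q(Y) = \chi(Y)$ to $2\chi(Y)S(X)$. The $\log(q!)$ piece yields $2\chi(X)S(Y)$ by symmetry.

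This is essentially bookkeeping, so there is no serious obstacle; the one subtlety worth attention is to consistently use the two representations $\chi = \sum(-1)^p\dim\Omega_p = \sum(-1)^p\dim H_p$ in the right places so that the three pieces align, and to notice that the two K\"unneth identities combine into the inhomogeneous expression $\dim\Omega_p(X)\dim\Omega_q(Y)-\dim H_p(X)\dim H_q(Y)$ rather than a product of rank differences.
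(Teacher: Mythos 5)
Your proposal is correct and follows essentially the same route as the paper: apply Theorem \ref{T:main_thm1} for $T'$, convert to $T$ via (\ref{logp!}), and reduce the correction term to the binomial sum by splitting $\log\tbinom{p+q}{p}=\log((p+q)!)-\log(p!)-\log(q!)$ and invoking the K\"{u}nneth formula at both the chain and homology level (the latter being exactly the step the paper dispatches with ``a similar formula takes place for $\dim H_p$''). Your packaging of the two conversions into a single quantity $S(P)$ and running the bookkeeping from the right-hand side is only a cosmetic reorganization of the paper's computation.
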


\begin{proof}
We use the R-torsions $\tau $ and $\tau ^{\prime }$ defined with respect to
the inner products $\iota $ and $\iota ^{\prime }$, respectively. By
Theorems \ref{T:main_thm} and \ref{T:main_thm1} we have 
\begin{equation}
\log \tau ^{\prime }(Z)=\chi (Y)\log \tau ^{\prime }(X)+\chi (X)\log \tau
^{\prime }(Y).  \label{tau2}
\end{equation}%
By (\ref{logp!}) we have%
\begin{equation*}
\log \tau ^{\prime }(X)=\log \tau (X)-\frac{1}{2}\sum_{p}\left( -1\right)
^{p}\dim \Omega _{p}\left( X\right) \log \left( p!\right) +\frac{1}{2}%
\sum_{p}\left( -1\right) ^{p}\dim H_{p}\left( X\right) \log \left( p!\right)
,
\end{equation*}%
where summation is taken over all $p\geq 0$. Similar identities hold for $Y$
and $Z$. Substituting into (\ref{tau2}), we obtain 
\begin{eqnarray}
&&\log \tau (Z)-\chi (Y)\log \tau (X)-\chi (X)\log \tau (Y)  \notag \\
&=&\frac{1}{2}\sum_{r}\left( -1\right) ^{r}\dim \Omega _{r}\left( Z\right)
\log \left( r!\right) -\frac{1}{2}\sum_{r}\left( -1\right) ^{r}\dim
H_{r}\left( Z\right) \log \left( r!\right)  \notag \\
&&-\frac{1}{2}\chi (Y)\sum_{p}\left( -1\right) ^{p}\dim \Omega _{p}\left(
X\right) \log \left( p!\right) +\frac{1}{2}\chi (Y)\sum_{p}\left( -1\right)
^{p}\dim H_{p}\left( X\right) \log \left( p!\right)  \notag \\
&&-\frac{1}{2}\chi (X)\sum_{q}\left( -1\right) ^{q}\dim \Omega _{q}\left(
Y\right) \log \left( q!\right) +\frac{1}{2}\chi (X)\sum_{q}\left( -1\right)
^{q}\dim H_{q}\left( Y\right) \log \left( q!\right)  \label{3r}
\end{eqnarray}%
Denote for simplicity 
\begin{equation*}
x_{p}=\dim \Omega _{p}\left( X\right) ,\ \ \ y_{q}=\dim \Omega _{q}\left(
Y\right) ,\ \ \ z_{r}=\dim \Omega _{r}\left( Z\right) .
\end{equation*}%
By the K\"{u}nneth formula we have%
\begin{equation*}
z_{r}=\sum_{p+q=r}x_{p}y_{q}.
\end{equation*}%
It follows that%
\begin{eqnarray*}
&&\sum_{r}\left( -1\right) ^{r}z_{r}\log \left( r!\right) -\chi \left(
Y\right) \sum_{p}\left( -1\right) ^{p}x_{p}\log \left( p!\right) -\chi
\left( X\right) \sum_{q}\left( -1\right) ^{q}y_{p}\log \left( q!\right) \\
&=&\sum_{r}\left( -1\right) ^{r}\sum_{p+q=r}x_{p}y_{q}\log \left( r!\right)
-\sum_{q}\left( -1\right) ^{q}y_{q}\sum_{p}\left( -1\right) ^{p}x_{p}\log
\left( p!\right) -\sum_{p}\left( -1\right) ^{p}x_{p}\sum_{q\geq 0}\left(
-1\right) ^{q}y_{p}\log \left( q!\right) \\
&=&\sum_{p,q}\left( -1\right) ^{p+q}x_{p}y_{q}\log \left( \left( p+q\right)
!\right) -\sum_{p,q}\left( -1\right) ^{p+q}x_{p}y_{q}\log \left( p!\right)
-\sum_{p,q}\left( -1\right) ^{p+q}x_{p}y_{q}\log \left( q!\right) \\
&=&\sum_{p,q}\left( -1\right) ^{p+q}x_{p}y_{q}\log \tbinom{p+q}{p}.
\end{eqnarray*}%
Note that the summation here can be restricted to $p,q\geq 1$ since
otherwise $\log \binom{p+q}{p}=0.$ A similar formula takes place for $\dim
H_{p}$ instead of $\dim \Omega _{p}.$ Substituting into (\ref{3r}) we obtain
(\ref{T1}).
\end{proof}

\begin{example}
\label{ExPrism}Let us compute the torsions of the digraph $Z=I\Box Y$ where $%
I$ is the interval from Example \ref{ExIn} and $Y$ is the triangle from
Example \ref{Ex1} (see Fig. \ref{pic8}). \FRAME{ftbhFU}{4.044cm}{4.0237cm}{%
0pt}{\Qcb{A prism digraph $I\Box Y$}}{\Qlb{pic8}}{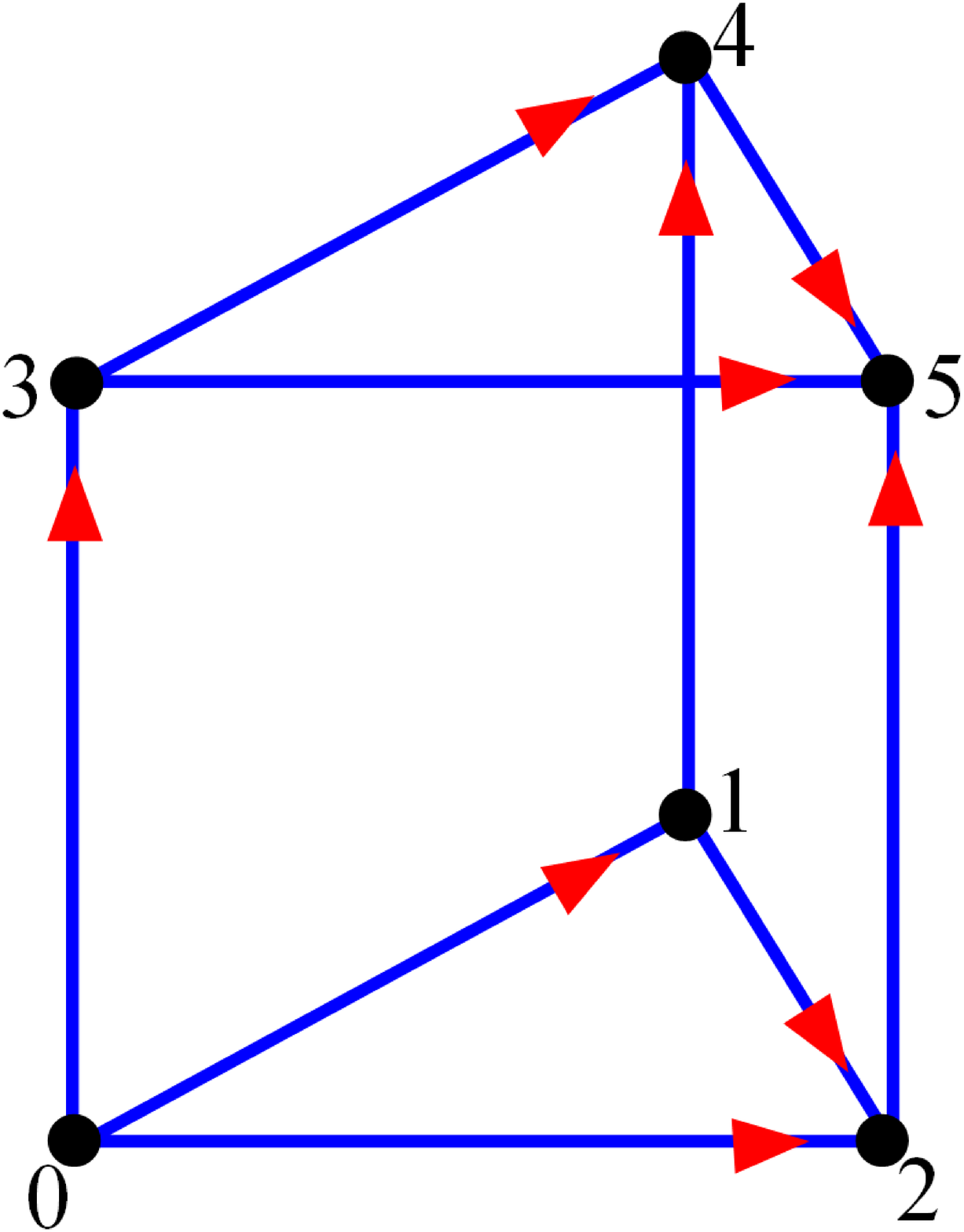}{\special%
{language "Scientific Word";type "GRAPHIC";maintain-aspect-ratio
TRUE;display "USEDEF";valid_file "F";width 4.044cm;height 4.0237cm;depth
0pt;original-width 13.423cm;original-height 13.3509cm;cropleft "0";croptop
"1";cropright "1";cropbottom "0";filename 'pic8.eps';file-properties
"XNPEU";}}

By Examples \ref{ExI} and \ref{Ex1}, we have $\chi \left( I\right) =\chi
\left( Y\right) =1$ and 
\begin{equation*}
T^{\prime }\left( I\right) =\sqrt{2},\ \ \ \ T^{\prime }\left( Y\right) =%
\sqrt{3/2}.
\end{equation*}%
Hence, we obtain by (\ref{logT2}) 
\begin{equation*}
T^{\prime }\left( Z\right) =T^{\prime }\left( I\right) ^{\chi \left(
Y\right) }T^{\prime }\left( Y\right) ^{\chi \left( I\right) }=\sqrt{2}\sqrt{%
3/2}=\sqrt{3}.
\end{equation*}%
Since $H_{p}\left( I\right) $ and $H_{p}\left( Y\right) $ are non-trivial
only for $p=0,$ we obtain by (\ref{T1})%
\begin{eqnarray*}
\log T(Z) &=&\chi (Y)\log T(I)+\chi (I)\log T(Y) \\
&&+\frac{1}{2}\sum_{p=1}^{1}\sum_{q=1}^{2}\left( -1\right) ^{p+q}\log 
\tbinom{p+q}{p}\dim \Omega _{p}\left( I\right) \dim \Omega _{q}\left(
Y\right) \\
&=&\log \sqrt{2}+\log \sqrt{3}+\frac{1}{2}\log \binom{2}{1}\cdot 1\cdot 3-%
\frac{1}{2}\log \binom{3}{1}\cdot 1\cdot 1 \\
&=&\log \left( \sqrt{2}\sqrt{3}2^{3/2}3^{-1/2}\right) =\log 4,
\end{eqnarray*}%
so that%
\begin{equation*}
T\left( Z\right) =4.
\end{equation*}
\end{example}

\section{Join of path complexes}

\label{SecJ}

\subsection{Augmented chain complex}

Let $P$ be a path complex over a set $V$ as in Section \ref{SecPC}. In that
section we have constructed a chain complex $\mathcal{R}=\left\{ \mathcal{R}%
_{p}\right\} _{p\geq 0}$ with the boundary operator $\partial $, and the
space $\mathcal{R}_{-1}$ was defined as $\left\{ 0\right\} .$ In this
section we change the definition of $\mathcal{R}_{-1}$ as follows. For any
elementary $0$-paths $e_{i}$ redefine $\partial $ by 
\begin{equation*}
\partial e_{i}=e
\end{equation*}%
where $e$ is an empty path that by definition has the length $-1.$ Set $%
\mathcal{R}_{-1}=\limfunc{span}_{\mathbb{R}}\left\{ e\right\} \cong \mathbb{R%
}$ and consider the \emph{augmented} chain complex $\widetilde{\mathcal{R}}%
=\left\{ \mathcal{R}_{p}\right\} _{p\geq -1}$ where the operator $\partial $
still satisfies $\partial ^{2}=0.$ Consequently, we obtain also the \emph{%
augmented} chain complex $\widetilde{\Omega }=\left\{ \Omega _{p}\right\}
_{p\geq -1}$ of $\partial $-invariant paths, where $\Omega _{p}$ with $%
\,p\geq 0$ is as before and $\Omega _{-1}=\mathcal{R}_{-1},$ as well as the 
\emph{reduced} homology groups $\{\widetilde{H}_{p}\}_{p\geq -1}$ where $%
\widetilde{H}_{-1}=\left\{ 0\right\} $, $\widetilde{H}_{p}=H_{p}$ for $p\geq
1$ and $H_{0}\cong \widetilde{H}_{0}\oplus \mathbb{R}.$ The reduced Euler
characteristic is%
\begin{equation}
\widetilde{\chi }\left( P\right) =\sum_{p\geq -1}\left( -1\right) ^{p}\dim
\Omega _{p}=\sum_{p\geq -1}\left( -1\right) ^{p}\dim \widetilde{H}_{p}=\chi
\left( P\right) -1.  \label{Er}
\end{equation}

Denote by $\left\langle ,\right\rangle $ the standard inner product in $%
\mathcal{R}_{p}$ defined by (\ref{ii}) in the case $p\geq 0$ and by $%
\left\langle e,e\right\rangle =1$ for $p=-1.$ If $u\in \mathcal{R}_{p}$ and $%
v\in \mathcal{R}_{q}$ with $p\neq q$ then set $\left\langle u,v\right\rangle
=0.$ As before, we denote by $\iota $ the standard inner product structure
in $\widetilde{\mathcal{R}}$.

\subsection{Join of path complexes and digraphs}

\label{SecJD}Let $V$ be a finite set.

\begin{definition}
For any paths $u\in \mathcal{R}_{p}\left( V\right) $ and $v\in \mathcal{R}%
_{q}\left( V\right) $ with $p,q\geq -1$ define their \emph{join} $u\cdot
v\in \mathcal{R}_{p+q+1}\left( V\right) $ as follows: first set 
\begin{equation*}
e_{i_{0}...i_{p}}\cdot e_{j_{0}...j_{q}}=e_{i_{0}...i_{p}j_{0}...j_{q}}
\end{equation*}%
and then extend this definition by linearity.
\end{definition}

In particular, we have $e_{i_{0}...i_{p}}\cdot e=e_{i_{0}...i_{p}}.$ The
following product formula holds for the chain complex\ $\widetilde{\mathcal{R%
}}\left( V\right) $: 
\begin{equation}
\partial \left( u\cdot v\right) =\left( \partial u\right) \cdot v+\left(
-1\right) ^{p+1}u\cdot \left( \partial v\right) .  \label{duv}
\end{equation}%
(see \cite[Lemma 2.2]{Grigoryan-Lin-Muranov-Yau2020} and \cite[Lemma 2.4]%
{GMY}).

Let $X,Y$ be two finite disjoint sets, set $Z=X\sqcup Y.$ Then all paths on $%
X$ and $Y$ can be considered as paths on $Z$. It follows easily from
definition of $u\cdot v$ that 
\begin{equation*}
u\in \mathcal{R}_{p}\left( X\right) \ \text{and\ }v\in \mathcal{R}_{q}\left(
Y\right) \ \ \Rightarrow \ \ u\cdot v\in \mathcal{R}_{p+q+1}\left( Z\right) .
\end{equation*}%
Also, for the standard inner product $\langle ,\rangle $ given by (\ref{ii}%
), we have 
\begin{equation}
\langle u\cdot v,\varphi \cdot \psi \rangle =\langle u,\varphi \rangle
\langle v,\psi \rangle ,  \label{uvfipsi}
\end{equation}%
for all $u\in \mathcal{R}_{p}\left( X\right) $, $v\in \mathcal{R}_{q}\left(
Y\right) $, $\varphi \in \mathcal{R}_{p^{\prime }}\left( X\right) $ and $%
\psi \in \mathcal{R}_{q^{\prime }}\left( X\right) $ (see also \cite[Lemma
3.10]{GMY}).

Let us extend the property (\ref{t}) of the definition of a path complex $P$
also to $n$-paths with $n=0$, that is, we allow in (\ref{t}) also $n=0.$
Then necessarily the empty path $e$ belongs to $P.$

\begin{definition}
Let $P\left( X\right) $ and $P\left( Y\right) $ be two path complexes over
finite disjoint sets $X$ and $Y$, respectively. Define the chain complex $%
P\left( Z\right) $ over the set $Z=X\sqcup Y$ as follows: $P\left( Z\right) $
consists of all the paths of the form $\,u\cdot v$ where $u\in P\left(
X\right) $ and $v\in P\left( Y\right) .$ The path complex $P\left( Z\right) $
is called the \emph{join} of $P\left( X\right) ,P\left( Y\right) $ and is
denoted by $P\left( Z\right) =P\left( X\right) \ast P\left( Z\right) .$
\end{definition}

Clearly, $P\left( X\right) $ and $P\left( Y\right) $ are subsets of $P\left(
Z\right) $.

\begin{definition}
If $X$ and $Y$ are digraphs then define their \emph{join} as a digraph $%
Z=X\ast Y$ where the set of vertices is $X\sqcup Y$ and the set of arrows
consists of all arrows of $X$, all arrows of $Y$ as well as of all arrows of
the form $x\rightarrow y$ where $x\in X$ and $y\in Y.$
\end{definition}

It is easy to see that 
\begin{equation*}
P\left( X\ast Y\right) =P\left( X\right) \ast P\left( Y\right)
\end{equation*}%
so that the operation of join of digraphs is compatible with join of path
complexes (cf. \cite{Grigoryan-Lin-Muranov-Yau2020}).

It is clear from the definition of $P\left( Z\right) $ that 
\begin{equation*}
u\in \mathcal{A}_{p}\left( X\right) \ \text{and\ }v\in \mathcal{A}_{q}\left(
Y\right) \ \ \Rightarrow \ \ u\cdot v\in \mathcal{A}_{p+q+1}\left( Z\right) .
\end{equation*}
It follows from (\ref{duv}) that, for all $p,q\geq -1$, 
\begin{equation*}
u\in \Omega _{p}\left( X\right) \ \ \text{and\ \ }v\in \Omega _{q}\left(
Y\right) \ \ \Rightarrow \ \ u\cdot v\in \Omega _{p+q+1}\left( Z\right)
\end{equation*}%
(see \cite[Prop 5.4]{Grigoryan-Lin-Muranov-Yau2020}). Furthermore, the
following version of the K\"{u}nneth formula is true for join: for any $%
r\geq -1$ 
\begin{equation}
\Omega _{r}\left( Z\right) =\bigoplus_{\left\{ p,q\geq -1:p+q+1=r\right\}
}\Omega _{p}\left( X\right) \otimes \Omega _{q}\left( Y\right)  \label{Kr}
\end{equation}%
where $u\otimes v$ for $u\in \Omega _{p}\left( X\right) $ and $v\in \Omega
_{q}\left( Y\right) $ is identified with the element $u\cdot v$ of $\Omega
_{r}\left( Z\right) $ (see \cite[Thm 5.5]{Grigoryan-Lin-Muranov-Yau2020}).

As a consequence of (\ref{Kr}) we obtain that%
\begin{equation}
\widetilde{\chi }\left( Z\right) =-\widetilde{\chi }\left( X\right) 
\widetilde{\chi }\left( Y\right) ,  \label{EZ}
\end{equation}%
where the minus comes from the additional $1$ in $r=p+q+1$.

\subsection{Operators $\partial ^{\ast }$ and $\Delta $ on joins}

We always assume in what follows that all the spaces $\mathcal{R}_{p}$ under
consideration are endowed with the standard inner product (\ref{ii}), in
particular, (\ref{uvfipsi}) is satisfied.

\begin{lemma}
Let $u\in \Omega _{p}\left( X\right) $ and $v\in \Omega _{q}\left( Y\right)
. $ Then%
\begin{equation}
\partial ^{\ast }\left( u\cdot v\right) =\left( \partial ^{\ast }u\right)
\cdot v+\left( -1\right) ^{p+1}u\cdot \left( \partial ^{\ast }v\right) .
\label{djoin}
\end{equation}
\end{lemma}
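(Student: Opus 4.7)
The plan is to adapt, essentially verbatim, the argument already used earlier for the product rule $\partial^{\ast}(u\times v)$ in the Cartesian setting, replacing each ingredient by its join counterpart: the product Leibniz rule is replaced by the join Leibniz rule (\ref{duv}); the multiplicativity of the normalized inner product under $\times$ is replaced by the multiplicativity (\ref{uvfipsi}) of the standard inner product under $\cdot$; and the product K\"{u}nneth formula (\ref{Hrpq}) is replaced by the join K\"{u}nneth formula (\ref{Kr}).

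Concretely, I would test the claimed identity against an arbitrary $w\in\Omega_{p+q+2}(Z)$ and check
$\langle\partial^{\ast}(u\cdot v),w\rangle=\langle(\partial^{\ast}u)\cdot v+(-1)^{p+1}u\cdot\partial^{\ast}v,w\rangle$.
By the join K\"{u}nneth formula (\ref{Kr}), expand
\[
w=\sum_{k}\varphi_{k}\cdot\psi_{k},\qquad \varphi_{k}\in\Omega_{p_{k}}(X),\ \psi_{k}\in\Omega_{q_{k}}(Y),\ p_{k}+q_{k}+1=p+q+2.
\]
Now move $\partial^{\ast}$ to the other side, apply the join Leibniz rule (\ref{duv}) to $\partial(\varphi_{k}\cdot\psi_{k})$, and factor the inner products using (\ref{uvfipsi}) to arrive at
\[
\langle\partial^{\ast}(u\cdot v),w\rangle=\sum_{k}\Bigl(\langle\partial^{\ast}u,\varphi_{k}\rangle\langle v,\psi_{k}\rangle+(-1)^{p_{k}+1}\langle u,\varphi_{k}\rangle\langle\partial^{\ast}v,\psi_{k}\rangle\Bigr).
\]
The key observation is that $\langle u,\varphi_{k}\rangle=0$ whenever $p_{k}\neq p$, so in the sign $(-1)^{p_{k}+1}$ of the second term we may replace $p_{k}$ by $p$ without changing the sum. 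Then regrouping via (\ref{uvfipsi}) and linearity gives $\langle(\partial^{\ast}u)\cdot v+(-1)^{p+1}u\cdot\partial^{\ast}v,w\rangle$, and since $w$ is arbitrary, (\ref{djoin}) follows.

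The only point that requires care, and hence the main (modest) obstacle, is tracking the sign: the join Leibniz rule carries $(-1)^{p+1}$ rather than the $(-1)^{p}$ that appears in the product case, and this is exactly what must propagate to (\ref{djoin}). The trick of replacing $p_{k}$ by $p$ in the sign, justified by the vanishing of $\langle u,\varphi_{k}\rangle$ off diagonal, is what lets the sign come out of the K\"{u}nneth summation as a global constant. No additional analysis is needed; everything else is bookkeeping.
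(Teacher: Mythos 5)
Your proposal is correct and follows essentially the same route as the paper's proof: pairing against an arbitrary $w\in\Omega_{p+q+2}(Z)$, decomposing $w=\sum_k\varphi_k\cdot\psi_k$ via the join K\"unneth structure, applying the join Leibniz rule (\ref{duv}) together with the multiplicativity (\ref{uvfipsi}) of the standard inner product, and using the vanishing of $\langle u,\varphi_k\rangle$ for $p_k\neq p$ to replace $p_k$ by $p$ in the sign. No gaps; this is the paper's argument.
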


\begin{proof}
By definition, we have, for any $w\in \Omega _{p+q+2}\left( Z\right) $ 
\begin{equation*}
\langle \partial ^{\ast }\left( u\cdot v\right) ,w\rangle =\langle u\cdot
v,\partial w\rangle
\end{equation*}%
Any $w\in \Omega _{\ast }\left( Z\right) $ admits a representation%
\begin{equation*}
w=\sum_{k}\varphi _{k}\cdot \psi _{k}
\end{equation*}%
where the sum is finite and%
\begin{equation*}
\varphi _{k}\in \Omega _{p_{k}}\left( X\right) \ \ \text{and }\psi _{k}\in
\Omega _{q_{k}}\left( Y\right)
\end{equation*}%
with $p_{k}+q_{k}+1=p+q+2$ (see \cite[Thm 5.1]{GMY} and \cite[Thm 5.15]%
{Grigoryan-Lin-Muranov-Yau2020}). Then we have using (\ref{uvfipsi}) 
\begin{eqnarray*}
\langle \partial ^{\ast }\left( u\cdot v\right) ,w\rangle &=&\langle u\cdot
v,\sum \partial \left( \varphi _{k}\cdot \psi _{k}\right) \rangle \\
&=&\langle u\cdot v,\sum \left( \partial \varphi _{k}\cdot \psi _{k}+\left(
-1\right) ^{p_{k}+1}\varphi _{k}\cdot \partial \psi _{k}\right) \rangle \\
&=&\sum \langle u\cdot v,\partial \varphi _{k}\cdot \psi _{k}\rangle +\left(
-1\right) ^{p_{k}+1}\langle u\cdot v,\varphi _{k}\cdot \partial \psi
_{k}\rangle \\
&=&\sum \langle u,\partial \varphi _{k}\rangle \langle v,\psi _{k}\rangle
+\left( -1\right) ^{p_{k}+1}\langle u,\varphi _{k}\rangle \langle v,\partial
\psi _{k}\rangle \\
&=&\sum \langle \partial ^{\ast }u,\varphi _{k}\rangle \langle v,\psi
_{k}\rangle +\left( -1\right) ^{p_{k}+1}\langle u,\varphi _{k}\rangle
\langle \partial ^{\ast }v,\psi _{k}\rangle .
\end{eqnarray*}%
Note that if $p_{k}\neq p$ then $\langle u,\varphi _{k}\rangle =0.$ Hence,
we can replace $p_{k}$ everywhere by $p$ and obtain%
\begin{eqnarray*}
\langle \partial ^{\ast }\left( u\cdot v\right) ,w\rangle &=&\sum \langle
\partial ^{\ast }u,\varphi _{k}\rangle \langle v,\psi _{k}\rangle +\left(
-1\right) ^{p+1}\langle u,\varphi _{k}\rangle \langle \partial ^{\ast
}v,\psi _{k}\rangle \\
&=&\sum \langle \partial ^{\ast }u\cdot v,\varphi _{k}\cdot \psi _{k}\rangle
+\langle \left( -1\right) ^{p+1}u\cdot \partial ^{\ast }v,\varphi _{k}\cdot
\psi _{k}\rangle \\
&=&\langle \partial ^{\ast }u\cdot v+\left( -1\right) ^{p+1}u\cdot \partial
^{\ast }v,\sum \varphi _{k}\cdot \psi _{k}\rangle \\
&=&\langle \partial ^{\ast }u\cdot v+\left( -1\right) ^{p+1}u\cdot \partial
^{\ast }v,w\rangle ,
\end{eqnarray*}%
whence (\ref{djoin}) follows.
\end{proof}

For the \emph{Hodge Laplacian}%
\begin{equation*}
\Delta u=\partial \partial ^{\ast }u+\partial ^{\ast }\partial u
\end{equation*}%
we have then the following identity.

\begin{lemma}
For all $u\in \Omega _{p}\left( X\right) $ and $v\in \Omega _{q}\left(
X\right) $ we have%
\begin{equation}
\Delta \left( u\cdot v\right) =\left( \Delta u\right) \cdot v+u\cdot \Delta
v.  \label{Deltajoin}
\end{equation}
\end{lemma}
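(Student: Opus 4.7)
The plan is to mimic precisely the argument used earlier for the Cartesian product (the lemma giving $\Delta(u\times v) = (\Delta u)\times v + u\times \Delta v$), replacing the two product rules for $\times$ by the corresponding product rules for $\cdot$ established in this section. Namely, I will use the two identities
\begin{equation*}
\partial(u\cdot v) = (\partial u)\cdot v + (-1)^{p+1} u\cdot(\partial v), \qquad
\partial^{\ast}(u\cdot v) = (\partial^{\ast}u)\cdot v + (-1)^{p+1} u\cdot(\partial^{\ast}v),
\end{equation*}
the first being \eqref{duv} and the second being \eqref{djoin}, and then compute $\partial\partial^{\ast}(u\cdot v)$ and $\partial^{\ast}\partial(u\cdot v)$ separately by applying each rule twice in succession.

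Concretely, I first expand $\partial^{\ast}\partial(u\cdot v)$ by applying $\partial$ to $u\cdot v$ using \eqref{duv}, and then applying $\partial^{\ast}$ via \eqref{djoin} to each of the two resulting terms $(\partial u)\cdot v$ (which lives in $\Omega_{p-1}\otimes\Omega_{q}$) and $u\cdot(\partial v)$ (which lives in $\Omega_{p}\otimes\Omega_{q-1}$). This produces four terms: two ``diagonal'' terms $(\partial^{\ast}\partial u)\cdot v$ and $u\cdot(\partial^{\ast}\partial v)$, and two ``mixed'' terms of the shape $(\partial u)\cdot(\partial^{\ast}v)$ and $(\partial^{\ast}u)\cdot(\partial v)$ with explicit signs depending on $p$. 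I repeat the same procedure in the other order to expand $\partial\partial^{\ast}(u\cdot v)$, obtaining the diagonal terms $(\partial\partial^{\ast}u)\cdot v$ and $u\cdot(\partial\partial^{\ast}v)$ together with mixed terms of the same two shapes.

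The only subtle point is bookkeeping of signs, because every application of a product rule introduces a factor $(-1)^{\text{(degree)}+1}$, and the degrees shift from $p$ to $p\pm 1$ between the two successive applications. However, since both the $\partial$- and the $\partial^{\ast}$-rules carry the same sign $(-1)^{p+1}$ (note that for the Cartesian product the two rules carried different signs, $(-1)^{p}$ and $(-1)^{p}$ respectively, whereas here they both carry $(-1)^{p+1}$), one checks that after the shift the cross-term coefficients in the expansion of $\partial\partial^{\ast}(u\cdot v)$ are exactly the negatives of those in the expansion of $\partial^{\ast}\partial(u\cdot v)$. Summing the two expansions therefore kills the four mixed terms, leaving only $(\Delta u)\cdot v + u\cdot(\Delta v)$, which is \eqref{Deltajoin}.

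I expect the sign-cancellation to be the only real obstacle, and once it is carried out it is mechanical; no new homological input beyond the two product rules is needed. The argument is genuinely parallel to the proof of \eqref{dusqv2}, so the writing can follow that model line by line with the single adjustment of the sign exponents from $p$ to $p+1$.
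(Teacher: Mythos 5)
Your proposal is correct and is essentially identical to the paper's proof: both expand $\partial\partial^{\ast}(u\cdot v)$ and $\partial^{\ast}\partial(u\cdot v)$ by two successive applications of (\ref{duv}) and (\ref{djoin}), track the degree shifts $p\mapsto p\pm 1$ in the sign exponents, and observe that the four mixed terms cancel pairwise upon summation. (Your parenthetical aside is slightly garbled --- in the Cartesian case both rules carry the \emph{same} sign $(-1)^{p}$ --- but this does not affect the argument.)
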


\begin{proof}
Indeed, by (\ref{djoin}) we have%
\begin{eqnarray*}
\partial \partial ^{\ast }\left( u\cdot v\right) &=&\partial \left( \partial
^{\ast }u\cdot v+\left( -1\right) ^{p+1}u\cdot \partial ^{\ast }v\right) \\
&=&\partial \left( \partial ^{\ast }u\cdot v\right) +\left( -1\right)
^{p+1}\partial \left( u\cdot \partial ^{\ast }v\right) \\
&=&\partial \partial ^{\ast }u\cdot v+\left( -1\right) ^{p+2}\partial ^{\ast
}u\cdot \partial v \\
&&+\left( -1\right) ^{p+1}\left( \partial u\cdot \partial ^{\ast }v+\left(
-1\right) ^{p+1}u\cdot \partial \partial ^{\ast }v\right) \\
&=&\partial \partial ^{\ast }u\cdot v+\left( -1\right) ^{p}\partial ^{\ast
}u\cdot \partial v+\left( -1\right) ^{p+1}\partial u\cdot \partial ^{\ast
}v+u\cdot \partial \partial ^{\ast }v
\end{eqnarray*}%
and by (\ref{duv})%
\begin{eqnarray*}
\partial ^{\ast }\partial \left( u\cdot v\right) &=&\partial ^{\ast }\left(
\partial u\cdot v+\left( -1\right) ^{p+1}u\cdot \partial v\right) \\
&=&\partial ^{\ast }\left( \partial u\cdot v\right) +\left( -1\right)
^{p+1}\partial ^{\ast }\left( u\cdot \partial v\right) \\
&=&\partial ^{\ast }\partial u\cdot v+\left( -1\right) ^{p}\partial u\cdot
\partial ^{\ast }v \\
&&+\left( -1\right) ^{p+1}\left( \partial ^{\ast }u\cdot \partial v+\left(
-1\right) ^{p+1}u\cdot \partial ^{\ast }\partial v\right) \\
&=&\partial ^{\ast }\partial u\cdot v+\left( -1\right) ^{p}\partial u\cdot
\partial ^{\ast }v+\left( -1\right) ^{p+1}\partial ^{\ast }u\cdot \partial
v+u\cdot \partial ^{\ast }\partial v.
\end{eqnarray*}%
Adding up the two identities, we see that the terms $\partial ^{\ast }u\cdot
\partial v$ and $\partial u\cdot \partial ^{\ast }v$ cancel out, and we
obtain (\ref{Deltajoin}).
\end{proof}

\subsection{Torsion of joins}

\label{SecTJ}Let $P$ be a path complex over a set $V$ with the standard
inner product structure $\iota $ given by (\ref{ii}). By means of the
augmented chain complex $\widetilde{\Omega }\left( P\right) $, let us define
the \emph{reduced} analytic torsion $\widetilde{T}\left( P\right) $ by%
\begin{equation*}
\log \widetilde{T}\left( P\right) =\log T(\widetilde{\Omega }\left( P\right)
,\iota )=\frac{1}{2}\sum_{p=-1}^{N}(-1)^{p}\,p\,\zeta _{p}^{\prime }(0).
\end{equation*}%
In the previous sections we used the standard analytic torsion $T\left(
P\right) $ given by%
\begin{equation*}
\log T\left( P\right) =\log T(\Omega \left( P\right) ,\iota )=\frac{1}{2}%
\sum_{p=0}^{N}(-1)^{p}\,p\,\zeta _{p}^{\prime }(0)
\end{equation*}%
The relation between $T\left( P\right) $ and $\widetilde{T}\left( P\right) $
is given by the following formula.

\begin{lemma}
We have 
\begin{equation}
T\left( P\right) =\sqrt{\left\vert V\right\vert }\widetilde{T}\left(
P\right) .  \label{rtM}
\end{equation}
\end{lemma}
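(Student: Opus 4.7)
The plan is to compare the Hodge Laplacians on the augmented and non-augmented chain complexes level by level and then apply the definition of analytic torsion directly. The two complexes differ only by the inclusion of $\Omega_{-1}=\mathbb{R}e$ together with the redefined boundary $\partial_0\colon\Omega_0\to\Omega_{-1}$, $\partial_0 e_i=e$; the operators $\partial_p$ and $\partial_{p+1}$ for $p\geq 1$ are identical in both settings.

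First I would observe that $\Delta_p^{\rm aug}=\Delta_p^{\rm non\text{-}aug}$ for every $p\geq 1$, so the corresponding zeta functions $\zeta_p(s)$ coincide for $p\geq 1$. At level $p=0$ the augmented Laplacian acquires the extra summand $\partial_0^{\ast}\partial_0$ and its spectrum genuinely changes, but the factor $p$ in
\[
\log T(\Omega,\iota)=\tfrac12\sum_{p}(-1)^p p\,\zeta_p'(0)
\]
kills the $p=0$ contribution in both complexes, so the modification at level $0$ is invisible to the torsion. This is the key simplification that isolates the entire discrepancy into the new level $p=-1$.

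Next I would compute $\zeta_{-1}^{\rm aug}$ directly. Since $\Omega_{-1}=\mathbb{R}e$ is one-dimensional with $\langle e,e\rangle=1$, the adjoint $\partial_0^{\ast}\colon\Omega_{-1}\to\Omega_0$ satisfies $\langle\partial_0^{\ast}e,e_i\rangle=\langle e,\partial_0 e_i\rangle=1$ for every $i\in V$, so $\partial_0^{\ast}e=\sum_{i\in V}e_i$ and
\[
\Delta_{-1}^{\rm aug}e=\partial_0\partial_0^{\ast}e=\partial_0\!\Big(\sum_{i\in V}e_i\Big)=|V|\,e.
\]
Hence $\Delta_{-1}^{\rm aug}$ acts as multiplication by $|V|$ on the one-dimensional space $\Omega_{-1}$, giving $\zeta_{-1}^{\rm aug}(s)=|V|^{-s}$ and $(\zeta_{-1}^{\rm aug})'(0)=-\log|V|$.

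Finally I would read off the comparison. The sum defining $\log\widetilde{T}(P)$ contains exactly one extra term relative to $\log T(P)$, namely the one at $p=-1$, which equals $\tfrac12(-1)^{-1}(-1)(\zeta_{-1}^{\rm aug})'(0)=\tfrac12(-\log|V|)=-\tfrac12\log|V|$. All remaining terms agree: the $p\geq 1$ contributions by the equality of Laplacians, and the $p=0$ term trivially because of the factor $p$. Therefore
\[
\log\widetilde{T}(P)=\log T(P)-\tfrac12\log|V|,
\]
which is equivalent to the claim $T(P)=\sqrt{|V|}\,\widetilde{T}(P)$. There is no serious obstacle here; the only point requiring care is the bookkeeping of signs and indices at $p=-1$, where $(-1)^p p=1$.
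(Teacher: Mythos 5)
Your proposal is correct and follows essentially the same route as the paper: the Laplacians agree for $p\geq 1$, the $p=0$ discrepancy is killed by the factor $p$ in the torsion formula, and the extra $p=-1$ term is computed from $\Delta_{-1}e=\left\vert V\right\vert e$ to give $\zeta_{-1}'(0)=-\log\left\vert V\right\vert$ and hence $\log\widetilde{T}(P)=\log T(P)-\tfrac12\log\left\vert V\right\vert$. The sign bookkeeping $(-1)^{-1}\cdot(-1)=1$ is handled correctly.
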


\begin{proof}
The zeta function $\zeta _{p}\left( s\right) $ is determined by the operator 
$\Delta _{p}$ that is the same for the chain complexes $\Omega $ and $%
\widetilde{\Omega }$ for all $p\geq 1.$ For $p=0$ the operators $\Delta _{p}$
are different for these two complexes, but the value $p=0$ does not give any
contribution to the analytic torsions. Hence, the difference is determined
by $p=-1,$ that is, 
\begin{equation}
\log \widetilde{T}\left( P\right) =\log T\left( P\right) +\frac{1}{2}\,\zeta
_{-1}^{\prime }(0).  \label{zeta-1}
\end{equation}%
For $e\in \Omega _{-1}$ we have $\partial e=0$ and 
\begin{equation*}
\partial ^{\ast }e=\sum_{i\in V}e_{i},
\end{equation*}
because for any $i$ 
\begin{equation*}
\left\langle \partial ^{\ast }e,e_{i}\right\rangle =\left\langle e,\partial
e_{i}\right\rangle =\left\langle e,e\right\rangle =1.
\end{equation*}%
Hence, 
\begin{equation*}
\Delta e=\partial \partial ^{\ast }e+\partial ^{\ast }\partial e=\partial
\sum_{i\in V}e_{i}=\left\vert V\right\vert e.
\end{equation*}%
Therefore, $\zeta _{-1}\left( s\right) =\left\vert V\right\vert ^{-s}$ and $%
\zeta _{-1}^{\prime }(0)=-\log \left\vert V\right\vert $. Substituting into (%
\ref{zeta-1}) we obtain%
\begin{equation}
\log \widetilde{T}\left( P\right) =\log T\left( P\right) -\frac{1}{2}\log
\left\vert V\right\vert ,  \label{T-T}
\end{equation}%
which is equivalent to (\ref{rtM}).
\end{proof}

The next theorem is our main result about torsion on joins.

\begin{theorem}
\label{Tmain2}For the join path complex $P\left( Z\right) =P\left( X\right)
\ast P\left( Y\right) $ we have 
\begin{equation}
\log \widetilde{T}(Z)=-\widetilde{\chi }(Y)\log \widetilde{T}(X)-\widetilde{%
\chi }(X)\log \widetilde{T}(Y)  \label{T1-}
\end{equation}%
where $\widetilde{\chi }$ is the reduced Euler characteristic.
\end{theorem}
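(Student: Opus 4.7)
The plan is to adapt the strategy used in the proof of Theorem~\ref{T:main_thm1} to the augmented setting, exploiting the K\"unneth formula~(\ref{Kr}), the Leibniz rule~(\ref{Deltajoin}) for the Hodge Laplacian on joins, and the product property~(\ref{uvfipsi}) of the standard inner product. First I would observe that if $\{u_i\}$ is an orthonormal basis of $\Omega_p(X)$ consisting of eigenvectors of $\Delta_p$ with eigenvalues $\lambda_i$ and $\{v_j\}$ is the analogous basis of $\Omega_q(Y)$ with eigenvalues $\mu_j$, then by (\ref{Deltajoin}) and (\ref{uvfipsi}) the family $\{u_i\cdot v_j\}$ is orthonormal and consists of eigenvectors of $\Delta_{p+q+1}$ on $\Omega_p(X)\otimes\Omega_q(Y)\subset\Omega_{p+q+1}(Z)$ with eigenvalues $\lambda_i+\mu_j$. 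Collecting such bases via the K\"unneth formula (\ref{Kr}) for all $p+q+1=r$, $p,q\geq-1$, I obtain, in complete analogy with (\ref{zetan}),
\begin{equation*}
\zeta_{r,Z}(s)=\sum_{\lambda+\mu>0}\,\sum_{\substack{p+q+1=r\\ p,q\geq-1}}(\lambda+\mu)^{-s}\,n_p(\lambda,X)\,n_q(\mu,Y).
\end{equation*}

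Next I would insert this into $\sum_{r\geq-1}(-1)^r\,r\,\zeta_{r,Z}(s)$, substitute $r=p+q+1$, and split the prefactor as
\begin{equation*}
(-1)^{p+q+1}(p+q+1)=-(-1)^{p+q}p-(-1)^{p+q}q-(-1)^{p+q},
\end{equation*}
decomposing the sum into three pieces, each factoring as an $X$-sum times a $Y$-sum. I then need the augmented-complex version of Lemma~\ref{LemNp}, namely $\sum_{p\geq-1}(-1)^p n_p(\lambda)=0$ for every $\lambda>0$. The proof there goes through unchanged: one only checks the new boundary index $p=-1$, where $n_{-1}''(\lambda)=0$ because $\partial\equiv 0$ on $\Omega_{-1}$ forces $\partial^{\ast}\partial\varphi=0\neq\lambda\varphi$, and the top index $p=N$ gives $n_N'(\lambda)=0$ as before.

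With this in hand, every contribution with $\lambda>0$ and $\mu>0$ vanishes, and the third term (with no $p$ or $q$ factor) vanishes as soon as $\lambda+\mu>0$; only the pieces with $\lambda=0,\mu>0$ and $\lambda>0,\mu=0$ survive. Using $\sum_{p\geq-1}(-1)^p n_p(0,X)=\sum_{p\geq-1}(-1)^p\dim\widetilde H_p(X)=\widetilde\chi(X)$ and the symmetric identity for $Y$, this reduces to
\begin{equation*}
\sum_{r\geq-1}(-1)^r r\,\zeta_{r,Z}(s)=-\widetilde\chi(Y)\sum_{p\geq-1}(-1)^p p\,\zeta_{p,X}(s)-\widetilde\chi(X)\sum_{q\geq-1}(-1)^q q\,\zeta_{q,Y}(s).
\end{equation*}
Differentiating at $s=0$ and dividing by $2$ yields (\ref{T1-}).

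The main obstacles are largely bookkeeping. One needs to be careful that Lemma~\ref{LemNp} is still available in the augmented setting (this reduces to inspecting the two boundary indices $p=-1$ and $p=N$); and one must correctly manage the sign structure of $(-1)^{p+q+1}(p+q+1)$, since unlike the Cartesian-product case this produces an extra constant term whose vanishing under $\lambda+\mu>0$ is exactly what forces the reduced Euler characteristics $\widetilde\chi$, rather than $\chi$, to appear on the right-hand side of (\ref{T1-}).
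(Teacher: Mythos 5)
Your proposal is correct and follows essentially the same route as the paper's proof: the same Künneth-based eigenvalue decomposition, the same augmented version of Lemma \ref{LemNp} (checking the boundary indices $p=-1$ and $p=N$), and the same observation that the extra constant term arising from $(-1)^{p+q+1}(p+q+1)$ vanishes for $\lambda+\mu>0$, which is what produces the reduced Euler characteristics. No gaps.
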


\begin{proof}[Proof of Theorem \protect\ref{Tmain2}]
The proof is similar to that of Theorem \ref{T:main_thm1}. Let $E_{p}\left(
\lambda \right) $ be the eigenspace of $\Delta _{p}$ with the eigenvalue $%
\lambda $, and set 
\begin{equation*}
n_{p}\left( \lambda \right) =\dim E_{p}\left( \lambda \right) .
\end{equation*}%
Lemmas \ref{LemE+} and \ref{Lemiso} go unchanged also for the augmented
chain complex $\widetilde{\Omega }=\{\Omega _{p}\}_{p\geq -1}.$ The same
argument as in the proof of Lemma \ref{LemNp} gives for any $\lambda >0$
that 
\begin{equation}
\sum\limits_{p=-1}^{N}(-1)^{p}n_{p}(\lambda )=0,  \label{sumnp}
\end{equation}%
because $n_{-1}^{\prime \prime }\left( \lambda \right) =0$: indeed, for any $%
\varphi \in E_{-1}^{\prime \prime }\left( \lambda \right) $ we have $%
\partial \varphi =0$ and, hence, $\varphi =\frac{1}{\lambda }\partial ^{\ast
}\partial \varphi =0.$

Arguing as in the proof of Theorem \ref{T:main_thm1} and using the K\"{u}%
nneth formula (\ref{Kr}) we obtain in place of (\ref{zetan}) the following
identity: 
\begin{equation*}
\zeta _{r,Z}(s)=\sum_{\lambda +\mu >0}\sum_{p+q+1=r}(\lambda +\mu
)^{-s}n_{p}(\lambda ,X)n_{q}(\mu ,Y)
\end{equation*}%
for any $r\geq -1.$ It follows that%
\begin{eqnarray}
&&\sum_{r\geq -1}(-1)^{r}r\zeta _{r,Z}(s)  \notag \\
&=&\sum_{\lambda +\mu >0}(\lambda +\mu )^{-s}\sum_{p\geq -1}\sum_{q\geq
-1}(-1)^{p+q+1}(p+q+1)n_{p}(\lambda ,X)n_{q}(\mu ,Y)  \notag \\
&=&-\sum_{\lambda +\mu >0}(\lambda +\mu )^{-s}\left( \sum_{p\geq
-1}(-1)^{p}p\,n_{p}(\lambda ,X)\right) \left( \sum_{q\geq
-1}(-1)^{q}n_{q}(\mu ,Y)\right)  \label{2b} \\
&&-\sum_{\lambda +\mu >0}(\lambda +\mu )^{-s}\left( \sum_{p\geq
-1}(-1)^{p}n_{p}(\lambda ,X)\right) \left( \sum_{q\geq
-1}(-1)^{q}q\,n_{q}(\mu ,Y)\right)  \label{3b} \\
&&-\sum_{\lambda +\mu >0}(\lambda +\mu )^{-s}\sum_{p\geq -1}\sum_{q\geq
-1}(-1)^{p+q}n_{p}(\lambda ,X)n_{q}(\mu ,Y).  \label{4b}
\end{eqnarray}%
By (\ref{sumnp}), if $\lambda >0$ then 
\begin{equation*}
\sum_{p\geq -1}(-1)^{p}n_{p}(\lambda ,X)=0
\end{equation*}%
and if $\mu >0$ then%
\begin{equation*}
\sum_{q\geq -1}(-1)^{q}n_{q}(\mu ,Y)=0.
\end{equation*}%
Hence, the double sum in (\ref{4b}) is equal to zero, while in (\ref{2b})-(%
\ref{3b}) all the terms with $\lambda >0$ and $\mu >0$ vanish. We obtain%
\begin{eqnarray*}
\sum_{r\geq -1}(-1)^{r}r\zeta _{r,Z}(s) &=&-\sum_{\lambda >0,\mu =0}\lambda
^{-s}\left( \sum_{p\geq -1}(-1)^{p}p\,n_{p}(\lambda ,X)\right) \left(
\sum_{q\geq -1}(-1)^{q}n_{q}(0,Y)\right) \\
&&-\sum_{\mu >0,\lambda =0}\mu ^{-s}\left( \sum_{p\geq
-1}(-1)^{p}n_{p}(0,X)\right) \left( \sum_{q\geq -1}(-1)^{q}q\,n_{q}(\mu
,Y)\right) \\
&=&-\widetilde{\chi }(Y)\sum_{p\geq -1}(-1)^{p}p\zeta _{p,X}(s)-\widetilde{%
\chi }(X)\sum_{q\geq -1}(-1)^{q}q\zeta _{q,Y}(s).
\end{eqnarray*}%
Taking derivative of the both sides at $s=0$ and using the definition of
analytic torsion, we obtain (\ref{T1-}).
\end{proof}

For the standard analytic torsion $T$ we obtain the following.

\begin{corollary}
\label{Cor2}We have 
\begin{eqnarray}
\log T(Z) &=&-\widetilde{\chi }(Y)\log T(X)-\widetilde{\chi }(X)\log T(Y) 
\notag \\
&&+\frac{1}{2}\log \left\vert Z\right\vert +\frac{\widetilde{\chi }\left(
Y\right) }{2}\log \left\vert X\right\vert +\frac{\widetilde{\chi }\left(
X\right) }{2}\log \left\vert Y\right\vert .  \label{Tj}
\end{eqnarray}
\end{corollary}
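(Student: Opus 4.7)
The plan is to reduce Corollary \ref{Cor2} to Theorem \ref{Tmain2} by the conversion formula between the standard and reduced analytic torsions established in Lemma (equation (\ref{rtM})/(\ref{T-T})).

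First I would record the identity
\begin{equation*}
\log \widetilde{T}(P) = \log T(P) - \tfrac{1}{2}\log |V|
\end{equation*}
for any path complex $P$ over a vertex set $V$, which follows from (\ref{T-T}). Applying this to each of $X$, $Y$, and $Z = X\sqcup Y$ (so that $|Z| = |X| + |Y|$ is just the cardinality of the vertex set of the join), I would then substitute these three identities into the join formula
\begin{equation*}
\log \widetilde{T}(Z) = -\widetilde{\chi}(Y)\log \widetilde{T}(X) - \widetilde{\chi}(X)\log \widetilde{T}(Y)
\end{equation*}
from Theorem \ref{Tmain2}. This yields
\begin{equation*}
\log T(Z) - \tfrac{1}{2}\log|Z| = -\widetilde{\chi}(Y)\bigl[\log T(X) - \tfrac{1}{2}\log|X|\bigr] - \widetilde{\chi}(X)\bigl[\log T(Y) - \tfrac{1}{2}\log|Y|\bigr],
\end{equation*}
and solving for $\log T(Z)$ gives exactly (\ref{Tj}).

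There is no real obstacle here; the entire content has already been packaged into Theorem \ref{Tmain2} (which handles the spectral/K\"unneth manipulation for the augmented complex) and into the lemma that compares $T$ with $\widetilde{T}$ (which accounts for the single extra eigenvalue $|V|$ of $\Delta_{-1}$ acting on the empty path). The corollary is therefore just an algebraic rearrangement. The only small thing to double-check is that the vertex set of $Z$ as a path complex is indeed $X \sqcup Y$ so that $|Z| = |X|+|Y|$, which is immediate from the definition of the join in Section \ref{SecJD}.
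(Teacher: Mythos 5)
Your proposal is correct and is essentially identical to the paper's own proof: the paper likewise substitutes the relation $\log \widetilde{T}(P)=\log T(P)-\frac{1}{2}\log|V|$ from (\ref{T-T}) for each of $X$, $Y$, $Z$ into the formula of Theorem \ref{Tmain2} and rearranges. Your remark that $|Z|=|X|+|Y|$ by the definition of the join is the only point to check, and it is indeed immediate.
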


\begin{proof}
Using (\ref{rtM}) and (\ref{Er}), we obtain%
\begin{eqnarray*}
\log T\left( Z\right) &=&\frac{1}{2}\log \left\vert Z\right\vert +\log 
\widetilde{T}\left( Z\right) \\
&=&\frac{1}{2}\log \left\vert Z\right\vert -\left( \widetilde{\chi }(Y)\log 
\widetilde{T}(X)+\widetilde{\chi }(X)\log \widetilde{T}(Y)\right) \\
&=&\frac{1}{2}\log \left\vert Z\right\vert -\widetilde{\chi }(Y)\left( \log
T(X)-\frac{1}{2}\log \left\vert X\right\vert \right) -\widetilde{\chi }%
(X)\left( \log T(Y)-\frac{1}{2}\log \left\vert Y\right\vert \right) \\
&=&\frac{1}{2}\log \left\vert Z\right\vert +\widetilde{\chi }(Y)\frac{1}{2}%
\log \left\vert X\right\vert +\widetilde{\chi }(X)\frac{1}{2}\log \left\vert
Y\right\vert \\
&&-\widetilde{\chi }(Y)\log T(X)-\widetilde{\chi }(X)\log T(Y).
\end{eqnarray*}
\end{proof}

For any $n\geq 2$ define on the set $X^{\ast n}=\underset{n\ \mathrm{times}}{%
\underbrace{X\sqcup ...\sqcup X}}$ the following path complex 
\begin{equation*}
P\left( X^{\ast n}\right) =\underset{n\ \mathrm{times}}{\underbrace{P\left(
X\right) \ast ...\ast P\left( X\right) }}=P(X)^{\ast n}.
\end{equation*}

\begin{corollary}
\label{Cor2n}We have 
\begin{equation}
\log \widetilde{T}(X^{\ast n})=n\left( -\widetilde{\chi }\left( X\right)
\right) ^{n-1}\log \widetilde{T}\left( X\right)  \label{TtiXn}
\end{equation}%
and%
\begin{equation}
\log T(X^{\ast n})=n\left( 1-\chi \left( X\right) \right) ^{n-1}\log T\left(
X\right) -\frac{1}{2}n\left( 1-\chi \left( X\right) \right) ^{n-1}\log
\left\vert X\right\vert +\frac{1}{2}\log \left( n\left\vert X\right\vert
\right) .  \label{TX*n}
\end{equation}
\end{corollary}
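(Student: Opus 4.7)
The plan is to mimic the recursive argument used for Corollary \ref{Cor1n}, but with the join version of the K\"{u}nneth-type identity (Theorem \ref{Tmain2}) replacing the Cartesian one. First I would compute the reduced Euler characteristic of $X^{\ast n}$. By iterating formula (\ref{EZ}), starting from $\widetilde{\chi}(X^{\ast 2}) = -\widetilde{\chi}(X)^2$, a quick induction gives
\[
\widetilde{\chi}(X^{\ast n}) = -\bigl(-\widetilde{\chi}(X)\bigr)^{n}.
\]
Setting $a = -\widetilde{\chi}(X)$ and $x_n = \log \widetilde{T}(X^{\ast n})$, and applying Theorem \ref{Tmain2} to the decomposition $X^{\ast(n+1)} = X^{\ast n} \ast X$, I obtain
\[
x_{n+1} = -\widetilde{\chi}(X)\, x_n - \widetilde{\chi}(X^{\ast n})\, x_1 = a\, x_n + a^{n} x_1.
\]

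This recursion is precisely the one that appeared in the proof of Corollary \ref{Cor1n}, so the same induction argument solves it: for $n=1$ the claim is trivial, and if $x_n = n a^{n-1} x_1$ then
\[
x_{n+1} = a \cdot n a^{n-1} x_1 + a^{n} x_1 = (n+1) a^{n} x_1,
\]
which establishes \eqref{TtiXn}.

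For the second identity I would then translate between $\widetilde{T}$ and $T$ using \eqref{rtM}. Since the vertex set of $X^{\ast n}$ has cardinality $n|X|$, we have $\log T(X^{\ast n}) = \log \widetilde{T}(X^{\ast n}) + \tfrac{1}{2}\log(n|X|)$ and $\log \widetilde{T}(X) = \log T(X) - \tfrac{1}{2}\log|X|$. Substituting into \eqref{TtiXn} and using the relation $-\widetilde{\chi}(X) = 1-\chi(X)$ from \eqref{Er} yields
\[
\log T(X^{\ast n}) = n(1-\chi(X))^{n-1}\log T(X) - \tfrac{1}{2}n(1-\chi(X))^{n-1}\log|X| + \tfrac{1}{2}\log(n|X|),
\]
which is \eqref{TX*n}.

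There is no real obstacle here beyond bookkeeping; the only points requiring a little care are the signs in the iteration of \eqref{EZ} (the extra minus sign from the shift $r = p+q+1$ is what produces $(-\widetilde{\chi}(X))^{n-1}$ rather than $\widetilde{\chi}(X)^{n-1}$), and the correct handling of the $\tfrac{1}{2}\log|V|$ correction when passing between $T$ and $\widetilde{T}$ for the product complex versus its factors.
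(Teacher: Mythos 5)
Your proposal is correct and follows essentially the same route as the paper: the same recursion $x_{n+1}=ax_{n}+a^{n}x_{1}$ with $a=-\widetilde{\chi }(X)$ obtained from Theorem \ref{Tmain2} and the identity $-\widetilde{\chi }(X^{\ast n})=a^{n}$, solved by the same induction, followed by the same translation between $\widetilde{T}$ and $T$ via (\ref{rtM}) and (\ref{Er}). No issues.
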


\begin{proof}
Denote $\log \widetilde{T}(X^{\ast n})=x_{n}$ and $-\widetilde{\chi }\left(
X\right) =a.$ Then $-\widetilde{\chi }(X^{\ast n})=a^{n}$, and we have by (%
\ref{T1-}) 
\begin{equation*}
x_{n+1}=ax_{n}+a^{n}x_{1}.
\end{equation*}%
By induction we obtain $x_{n}=na^{n-1}x_{1}$, which proves (\ref{TtiXn}).

Using (\ref{rtM}) (or (\ref{T-T})) and (\ref{Er}), we obtain from (\ref%
{TtiXn})%
\begin{eqnarray*}
\log T(X^{\ast n}) &=&\log \widetilde{T}(X^{\ast n})+\frac{1}{2}\log
\left\vert X^{\ast n}\right\vert \\
&=&n\left( 1-\chi \left( X\right) \right) ^{n-1}(\log T\left( X\right) -%
\frac{1}{2}\log \left\vert X\right\vert )+\frac{1}{2}\log \left( n\left\vert
X\right\vert \right) \\
&=&n\left( 1-\chi \left( X\right) \right) ^{n-1}\log T\left( X\right) -\frac{%
1}{2}n\left( 1-\chi \left( X\right) \right) ^{n-1}\log \left\vert
X\right\vert +\frac{1}{2}\log \left( n\left\vert X\right\vert \right) ,
\end{eqnarray*}%
which proves (\ref{TX*n}).
\end{proof}

For example, if $\chi \left( X\right) =0$ then (\ref{TX*n}) yields%
\begin{equation}
\log T(X^{\ast n})=n\log T\left( X\right) -\frac{n-1}{2}\log \left\vert
X\right\vert +\frac{1}{2}\log n,  \label{TX0}
\end{equation}%
if $\chi \left( X\right) =1$ then (\ref{TX*n}) yields 
\begin{equation}
T(X^{\ast n})=\sqrt{n\left\vert X\right\vert },  \label{TX1}
\end{equation}%
and if $\chi \left( X\right) =2$ then%
\begin{equation}
\log T(X^{\ast n})=n\left( -1\right) ^{n-1}\log T\left( X\right) +\frac{%
1+n\left( -1\right) ^{n}}{2}\log \left\vert X\right\vert +\frac{1}{2}\log n.
\label{TX2}
\end{equation}

\begin{example}
\label{ExSim}Let $O=\left\{ \bullet \right\} $ be a trivial digraph of one
vertex. It is easy to see that $\chi \left( O\right) =1.$ The join $O\ast
O=O^{\ast 2}$ is the interval $I$ from Example \ref{ExI}, the join $O\ast
I=O^{\ast 3}$ is the triangle from Example \ref{Ex1}. More generally, $%
O^{\ast n}$ can be regarded as an $\left( n-1\right) $-dimensional digraph 
\emph{simplex}\ (see Fig. \ref{pic11}).\FRAME{ftbhFU}{3.7714cm}{3.2735cm}{0pt%
}{\Qcb{Simplex $O^{\ast 4}=I^{\ast 2}$}}{\Qlb{pic11}}{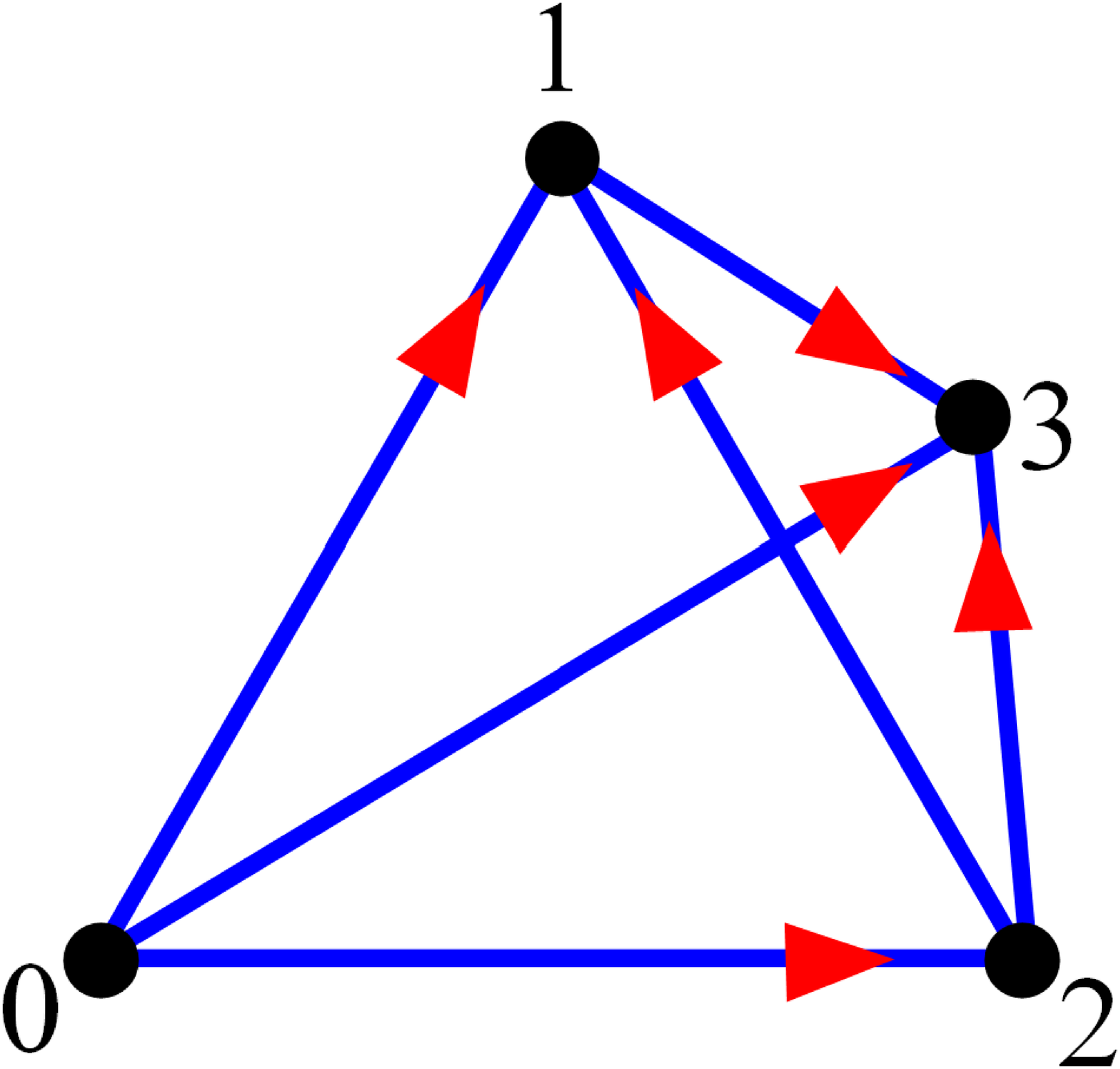}{\special%
{language "Scientific Word";type "GRAPHIC";maintain-aspect-ratio
TRUE;display "USEDEF";valid_file "F";width 3.7714cm;height 3.2735cm;depth
0pt;original-width 13.8938cm;original-height 12.0487cm;cropleft "0";croptop
"1";cropright "1";cropbottom "0";filename 'pic11.eps';file-properties
"XNPEU";}}

From (\ref{TX1}) we obtain that 
\begin{equation*}
T(O^{\ast n})=\sqrt{n}.
\end{equation*}%
For example, $T(O^{\ast 3})=\sqrt{3}$ as we have seen in Example \ref{Ex1}.
\end{example}

\begin{example}
\label{Exsphere}Consider the digraph $D=\left\{ \bullet ,\bullet \right\} $
consisting of two disjoint vertices. The join $D^{\ast 2}$ is a
quadrilateral, and $D^{\ast 3}$ is an octahedron (see Fig. \ref{pic13}). The
digraph $D^{\ast n}$ can be regarded as a digraph analogue of an $\left(
n-1\right) $-dimensional sphere. \FRAME{ftbhFU}{9.2415cm}{3.8683cm}{0pt}{%
\Qcb{The octahedron $D^{\ast 3}$ is shown in two ways. The green subgraph is 
$D^{\ast 2}.$}}{\Qlb{pic13}}{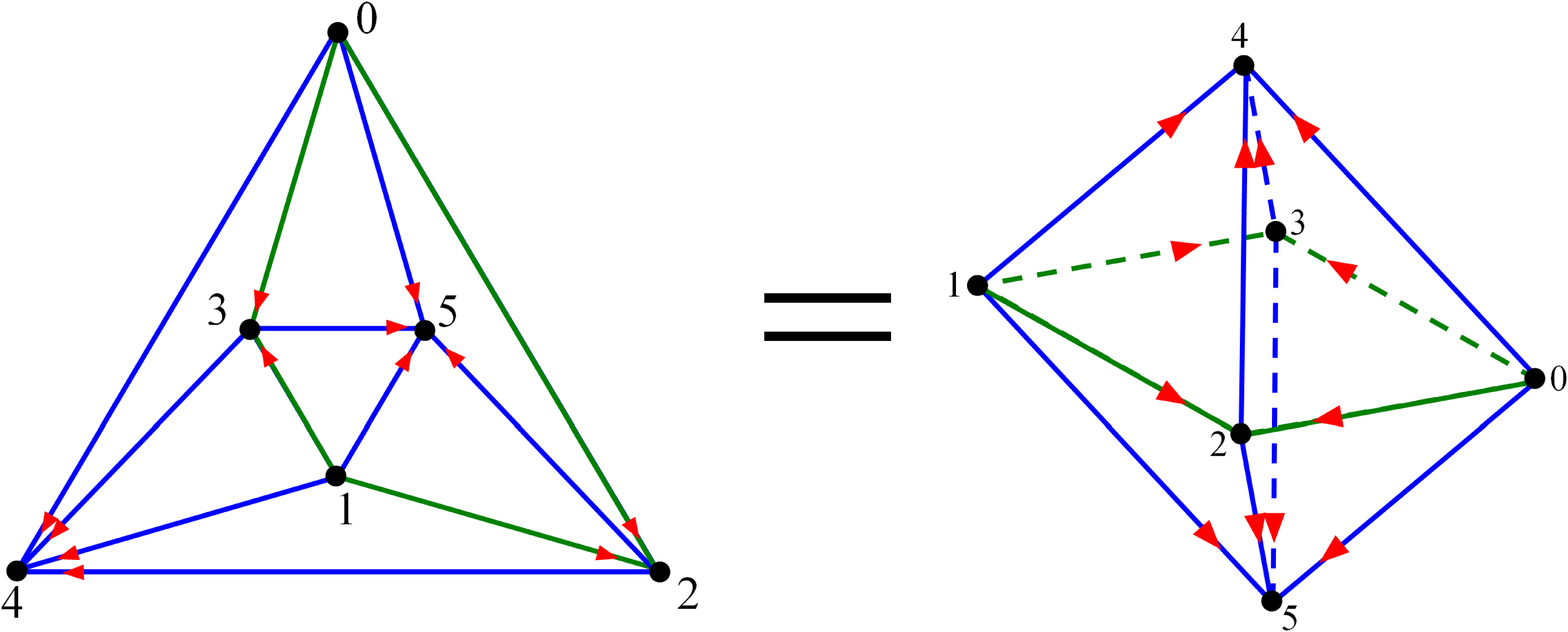}{\special{language "Scientific
Word";type "GRAPHIC";maintain-aspect-ratio TRUE;display "USEDEF";valid_file
"F";width 9.2415cm;height 3.8683cm;depth 0pt;original-width
35.5671cm;original-height 14.8356cm;cropleft "0";croptop "1";cropright
"1";cropbottom "0";filename 'pic13.eps';file-properties "XNPEU";}}

We have $\chi \left( D\right) =2$ and $\tau \left( D\right) =T\left(
D\right) =1.$ By (\ref{TX2}) we obtain that 
\begin{equation*}
\log T((D^{\ast n})=\frac{1+n\left( -1\right) ^{n}}{2}\log 2+\frac{1}{2}\log
n,
\end{equation*}%
that is%
\begin{equation*}
T(D^{\ast n})=\sqrt{n}2^{\frac{1+n\left( -1\right) ^{n}}{2}}.
\end{equation*}%
For example, 
\begin{equation*}
T(D^{\ast 2})=\sqrt{2}2^{\frac{3}{2}}=4
\end{equation*}%
and%
\begin{equation*}
T(D^{\ast 3})=\sqrt{3}2^{-1}=\frac{\sqrt{3}}{2}.
\end{equation*}
\end{example}

\begin{example}
Let $G$ be a cyclic digraph from Example \ref{Excycle} with $m$ vertices.
Since $\chi \left( G\right) =0$ and $T\left( G\right) =m$, we obtain by (\ref%
{TX0})%
\begin{equation*}
\log T(G^{\ast n})=n\log m-\frac{n-1}{2}\log m+\frac{1}{2}\log n
\end{equation*}%
and 
\begin{equation*}
T(G^{\ast n})=\sqrt{n}m^{\frac{n+1}{2}}.
\end{equation*}
\end{example}

\begin{acknowledgement}
\medskip We would like to thank Wang Chong for the computation of torsions
of digraphs in some Examples.
\end{acknowledgement}

\

\end{document}